\begin{document}
\newcommand {\emptycomment}[1]{} 

\baselineskip=15pt
\newcommand{\nc}{\newcommand}
\newcommand{\delete}[1]{}
\nc{\mfootnote}[1]{\footnote{#1}} 
\nc{\todo}[1]{\tred{To do:} #1}

\nc{\mlabel}[1]{\label{#1}}  
\nc{\mcite}[1]{\cite{#1}}  
\nc{\mref}[1]{\ref{#1}}  
\nc{\meqref}[1]{\eqref{#1}} 
\nc{\mbibitem}[1]{\bibitem{#1}} 

\delete{
\nc{\mlabel}[1]{\label{#1}  
{\hfill \hspace{1cm}{\bf{{\ }\hfill(#1)}}}}
\nc{\mcite}[1]{\cite{#1}{{\bf{{\ }(#1)}}}}  
\nc{\mref}[1]{\ref{#1}{{\bf{{\ }(#1)}}}}  
\nc{\meqref}[1]{\eqref{#1}{{\bf{{\ }(#1)}}}} 
\nc{\mbibitem}[1]{\bibitem[\bf #1]{#1}} 
}

\newcommand {\comment}[1]{{\marginpar{*}\scriptsize\textbf{Comments:} #1}}
\nc{\mrm}[1]{{\rm #1}}
\nc{\id}{\mrm{id}}  \nc{\Id}{\mrm{Id}}

\def\a{\alpha}
\def\b{\beta}
\def\bd{\boxdot}
\def\bbf{\bar{f}}
\def\bF{\bar{F}}
\def\bbF{\bar{\bar{F}}}
\def\bbbf{\bar{\bar{f}}}
\def\bg{\bar{g}}
\def\bG{\bar{G}}
\def\bbG{\bar{\bar{G}}}
\def\bbg{\bar{\bar{g}}}
\def\bT{\bar{T}}
\def\bt{\bar{t}}
\def\bbT{\bar{\bar{T}}}
\def\bbt{\bar{\bar{t}}}
\def\bR{\bar{R}}
\def\br{\bar{r}}
\def\bbR{\bar{\bar{R}}}
\def\bbr{\bar{\bar{r}}}
\def\bu{\bar{u}}
\def\bU{\bar{U}}
\def\bbU{\bar{\bar{U}}}
\def\bbu{\bar{\bar{u}}}
\def\bw{\bar{w}}
\def\bW{\bar{W}}
\def\bbW{\bar{\bar{W}}}
\def\bbw{\bar{\bar{w}}}
\def\btl{\blacktriangleright}
\def\btr{\blacktriangleleft}
\def\ci{\circ}
\def\d{\delta}
\def\dd{\diamondsuit}
\def\D{\Delta}
\def\G{\Gamma}
\def\g{\gamma}
\def\k{\kappa}
\def\l{\lambda}
\def\lr{\longrightarrow}
\def\o{\otimes}
\def\om{\omega}
\def\p{\psi}
\def\r{\rho}
\def\ra{\rightarrow}
\def\rh{\rightharpoonup}
\def\lh{\leftharpoonup}
\def\s{\sigma}
\def\st{\star}
\def\ti{\times}
\def\tl{\triangleright}
\def\tr{\triangleleft}
\def\v{\varepsilon}
\def\vp{\varphi}
\def\vth{\vartheta}

\newtheorem{thm}{Theorem}[section]
\newtheorem{lem}[thm]{Lemma}
\newtheorem{cor}[thm]{Corollary}
\newtheorem{pro}[thm]{Proposition}
\theoremstyle{definition}
\newtheorem{defi}[thm]{Definition}
\newtheorem{ex}[thm]{Example}
\newtheorem{rmk}[thm]{Remark}
\newtheorem{pdef}[thm]{Proposition-Definition}
\newtheorem{condition}[thm]{Condition}
\newtheorem{question}[thm]{Question}
\renewcommand{\labelenumi}{{\rm(\alph{enumi})}}
\renewcommand{\theenumi}{\alph{enumi}}

\nc{\ts}[1]{\textcolor{purple}{Tianshui:#1}}

\font\cyr=wncyr10

 \title[Infinitesimal (BiHom-)bialgebras of any weight]{\bf Infinitesimal (BiHom-)bialgebras of any weight (I): Basic definitions and properties}

 \author[Ma]{Tianshui Ma}
 \address{School of Mathematics and Information Science, Henan Normal University, Xinxiang 453007, China}
         \email{matianshui@htu.edu.cn}

 \author[Makhlouf]{Abdenacer Makhlouf\textsuperscript{*}}
 \address{Universit{\'e} de Haute Alsace, IRIMAS-D\'epartement  de Math{\'e}matiques,18  rue des Fr{\`e}res Lumi{\`e}re F-68093 Mulhouse, France}

         \email{abdenacer.makhlouf@uha.fr}

  \thanks{\textsuperscript{*}Corresponding author}

\date{\today}

\begin{abstract} 
The purpose of this paper is to introduce and  study  $\lambda$-infinitesimal BiHom-bialgebras (abbr. $\l$-infBH-bialgebra) and some related structures. They can be seen as an extension of  $\l$-infinitesimal bialgebras considered by Ebrahimi-Fard, including Joni and Rota's  infinitesimal bialgebras  as well as  Loday and Ronco's infinitesimal bialgebras,  and including also infinitesimal BiHom-bialgebras introduced  by Liu, Makhlouf, Menini, Panaite. In this paper, we provide various relevant constructions and new concepts.   Two ways are provided for a unitary (resp. counitary) algebra (coalgebra) to be a $\l$-infBH-bialgebra and the  notion of $\l$-infBH-Hopf module is introduced and discussed. It is proved, in connexion with nonhomogeneous (co)associative BiHom-Yang-Baxter equation, that every (left BiHom-)module (resp. comodule) over a (anti-)quasitriangular (resp. (anti-)coquasitriangular) $\l$-infBH-bialgebra carries a structure of $\l$-infBH-Hopf module. Moreover, two approaches to construct BiHom-pre-Lie (co)algebras from $\l$-infBH-bialgebras are presented.
\end{abstract}

 \keywords{Infinitesimal bialgebra;infinitesimal Hopf module;quasitriangular infinitesimal bialgebra}

\subjclass[2020]{
17B61,
17D30,
17B38,
17A30,
16T10.}

 \maketitle

\tableofcontents

\numberwithin{equation}{section}
\allowdisplaybreaks

\section{Introduction and Preliminaries}
 A triple $(A, \mu, \D)$ is called an infinitesimal bialgebra if $(A, \mu)$ is an associative algebra, $(A, \D)$ is a coassociative coalgebra such that $\D$ is a derivation, i.e., for all $a, b \in A$, $\D(ab)=a\D(b)+\D(a)b$. This concept was introduced by Joni and Rota in connection with the calculus of divided differences \cite{JR}. In \cite{Ag99}, Aguiar developed the basic theory of infinitesimal Hopf (bialgebra) algebras. Infinitesimal versions of bicrossproduct, quasitriangular bialgebra, Yang-Baxter equation and Drinfeld double are defined. It is shown that the path algebra of an arbitrary quiver admits a canonical structure of infinitesimal Hopf algebra. Hom-version of infinitesimal bialgebra
was considered  in \cite{Yau10}, BiHom-version in \cite{LMMP3} and  braided version in \cite{WW14}. Other studies and results related to infinitesimal bialgebras can be found in  \cite{Ag00b,Ag04,GW,MY,MLiY,MYZZ}. The notion of infinitesimal bialgebra of weight $-1$ was introduced by Loday and Ronco \cite{LR06}  in order to obtain a structure theorem of cofree Hopf algebras, which is a part of a 2-associative bialgebra. An example can be constructed through  polynomial algebra $K[x]$ with the coproduct $\D(x^n)=\sum_{p=1}^{n}x^{p}\o x^{n-p}$.

 A generalization including both cases was provided by Ebrahimi-Fard in \cite{EF06}, where infinitesimal bialgebra of weight $\l$ (abbr. $\l$-inf bialgebra) are defined. More precisely, a $\l$-inf bialgebra is a triple $(A, \mu, \D)$ consisting of an algebra $(A, \mu)$ (possibly without a unit) and a coalgebra $(A, \D)$ (possibly without a counit) such that for all $a, b \in A, \D(ab)=a\D(b)+\D(a)b+\l a\o b$. A solution of the nonhomogeneous associative Yang-Baxter equation \cite{OP10,EF06} can produce a $\l$-inf bialgebra. Further results about $\l$-inf bialgebras can be found in\cite{EF06,Fo09,Fo10,LR06,ZCGL18,ZG,ZGZL}. It is worth mentioning that a unitary and counitary $0$-inf bialgebra $H$ is trivial, i.e., $H=0$ and $\v(1)=0$ \cite{Ag99}, but for non-zero weight cases, this result does not hold. For example, if the weight $\l$ of infinitesimal bialgebra $H$ is $-1$ and suppose that $H$ is unitary and counitary, then the counit $\v$ is an algebra map. Examples of (co)unitary $\l$-inf bialgebras can be constructed on  the space of decorated planar rooted forests \cite{ZGL,ZCGL18}.

 Zhang and Guo introduced  in \cite{ZG} the concept of $\l$-infinitesimal Hopf modules and proved that any module over a unitary quasitriangular $\l$-inf bialgebra owns a structure of $\l$--infinitesimal Hopf module. They also derived two pre-Lie algebras from a $\l$-inf bialgebra and commutative $\l$-inf bialgebra. Also in \cite{ZG}, an example shows that any unitary algebra $(A, \mu, 1)$ is a unitary $\l$-inf bialgebra by taking $\D(a)=-\l (a\o 1)$. A very interesting phenomenon that we find is that if we define a coproduct on a unitary algebra $(A, \mu, 1)$ by $\overline{\D}(a)=-\l(1\o a)$ (just exchanging the positions of a and $1$), then $(A, \mu, 1, \overline{\D})$ is also a  unitary $\l$-inf bialgebra. Enlightened by this phenomenon, we introduce the notion of anti-quasitriangular $\l$-inf bialgebra (see Definition \ref{de:14.6a}) which is different from the quasitriangular case based on their equivalent characterizations (see Propositions \ref{pro:14.8} and \ref{pro:14.25}). When $\l=-1$, the equivalent characterizations for an anti-quasitriangular $\l$-inf bialgebra in Proposition  \ref{pro:14.25} are consistent with \cite[Lemma 3.19]{Br}.  Furthermore  we show  that any module over a unitary anti-quasitriangular $\l$-inf bialgebra still has a structure of $\l$-infinitesimal Hopf module and its comodule action depends on the weight $\l$ (see Theorem \ref{thm:12.02a}). Anti-quasitriangular $0$-inf bialgebra and quasitriangular $0$-inf bialgebra are consistent. One purpose of this paper is to explore this new phenomenon.

 In \cite{GMMP}, Graziani, Makhlouf, Menini and Panaite introduced algebras in a group Hom-category, which are called BiHom-associative algebras and involve two commuting multiplicative linear maps. BiHom-type algebras can be seen as an extension of Hom-type algebras, which first arose in quasi-deformations of Lie algebras of vector fields and lead to the concept of Hom-Lie algebras introduced by Hartwig, Larsson and Silvestrov in \cite{HLS}, see also \cite{MS}. The examples of BiHom-type of algebras can be provided by the ``Yau twist principle". In 2020, Liu, Makhlouf, Menini and Panaite proposed and studied (quasitriangular) Joni-Rota's infinitesimal BiHom-bialgebras \cite{LMMP3} generalizing the Hom-case studied by Yau  in \cite{Yau10}. In \cite{MY}, Ma and Yang presented the Drinfeld double for infinitesimal BiHom-bialgebra and Ma, Li and Yang studied the properties of infinitesimal BiHom-bialgebra from BiHom-coderivations \cite{MLiY}. However, the weight of infinitesimal BiHom-bialgebras studied above is zero. The other purpose of this  paper is to extend these studies and consider the case of infinitesimal BiHom-bialgebra of any weight $\l$.

 The layout of the paper is as follows. In Section \ref{se:infbhbialg}, we extend Ebrahimi-Fard's $\l$-inf bialgebras   \cite{EF06} to the BiHom-case, which includes Joni and Rota's infinitesimal bialgebras \cite{JR}, Loday and Ronco's infinitesimal bialgebra \cite{LR06} and Liu, Makhlouf, Menini, Panaite's infinitesimal BiHom-bialgebras \cite{LMMP3}. We prove that every unitary BiHom-associative algebra $(A, \mu, 1, \alpha, \beta)$ (resp. counitary BiHom-coassociative coalgebra $(A, \Delta, \psi, \omega))$ possesses $\lambda$-infBH-bialgebra structures with the comultiplication $\D: A\lr A\o A$ by $\Delta(a)=-\lambda(\omega(a)\otimes 1)$ or $\Delta(a)=-\lambda(1\otimes \psi(a))\big)$ (Example \ref{ex:12.3}) (resp. multiplication $\mu: A\o A\lr A$ by $\mu(a\otimes b)=-\lambda\alpha(a)\varepsilon(b)$ or $\mu(a\otimes b)=-\lambda\varepsilon(a)\beta(b)$ (Example \ref{ex:12.35})). These two examples arouse the new constructions of $\lambda$-infBH-bialgebra by an element $r\in A\o A$ (Definitions \ref{de:14.6} and \ref{de:14.6a}) (resp. $\s\in (A\o A)^*$ (Definition \ref{de:13.004})). Let $(A,\mu_{A}, \Delta_{A}, \varepsilon_{A}, \alpha_{A}, \beta_{A}, \psi_{A}, \omega_{A})$ (resp. $(C,\mu_{C},\Delta_{C},1_{C},\alpha_{C},\beta_{C},\psi_{C},\omega_{C})$) be a counitary (resp. unitary) $\lambda$-infBH-bialgebra. We provide a new tensor product algebra (resp. coalgebra) structure of two algebras (resp. coalgebras) such that $\Delta_{A}: (A,\mu_{A},\alpha_{A},\beta_{A})\longrightarrow (A\otimes A,\cdot_{\varepsilon},\alpha_{A}\otimes\alpha_{A},\beta_{A}\otimes\beta_{A})$ (resp. $\mu_{C}: (C\otimes C,\Delta_{\eta},\psi_{C}\otimes\psi_{C},\omega_{C}\otimes\omega_{C})\longrightarrow (C,\Delta_{C},\psi_{C},\omega_{C})$) is a morphism of BiHom-associative algebras (resp. coalgebras) (see Theorem \ref{thm:12.12}).

 In Section \ref{se:infbhmod} we investigate  $\l$-infBH-Hopf modules. First, we provide some relevant and  interesting examples. Then considering  a unitary BiHom-algebra $(A, \mu, 1, \a, \b)$, an element $r\in A\o A$ and $\psi, \om: A\lr A$ two linear maps, we define a linear map $\D_r: A\lr A\o A$ by
  $
 \Delta_{r}(a)=\alpha^{-1}(a)\triangleright r-r\triangleleft \beta^{-1}(a)-\lambda (\omega(a)\otimes 1),~~~\forall~~a\in A
$
 (Eq.(\ref{eq:14.5})) and
  prove that $\Delta_{r}$ is a $\lambda$-BiHom-derivation and also find the equivalent condition for $\D_r$ is BiHom-coassociative (Theorem \ref{thm:14.5}). Then we introduce the notions of $\l$-associative BiHom-Yang-Baxter equation (abbr. $\l$-abhYBe) and quasitriangular unitary $\l$-infBH-bialgebra. Therefore we investigate  equivalent characterizations of quasitriangular unitary $\l$-infBH-bialgebras (see Proposition \ref{pro:14.8}) and obtain that  modules over quasitriangular unitary $\lambda$-infBH-bialgebras carry  structures of $\lambda$-infBH-Hopf modules (Theorem \ref{thm:12.02}). Moreover, inspired by Example \mref{ex:12.3}, we  define a new comultiplication for $\l$-infBH-bialgebra by replacing $\l(\om(a)\o 1)$ in $\Delta_{r}$ above with $\lambda (1\otimes \psi(a))$, then a new interesting phenomenon appears. We notice  that $\widetilde{\Delta}_{r}$ defined by Eq.(\mref{eq:14.25}) is still a $\lambda$-BiHom-derivation (Proposition \ref{pro:14.21}). It turns out that the weight of the induced associative BiHom-Yang-Baxter equation is $-\l$ by the coassociativity of $\widetilde{\Delta}_{r}$ (Theorem \ref{thm:14.22}). We call this unitary $\l$-infBH-bialgebra determined by $\l$-abhYBe ``anti-quasitriangular" (Definition \ref{de:14.6a}). Comparing the equivalent characterizations of quasitriangular case in Proposition \ref{pro:14.8} and anti-quasitriangular case in Proposition \ref{pro:14.25}, we find the two cases are essentially different for the non-zero weight. While for zero weight, they are same. Furthermore, $\lambda$-infBH-Hopf modules also can be derived from the modules over anti-quasitriangular unitary $\lambda$-infBH-bialgebras. We notice here that the comodule coaction depends on the weight $\l$ (Theorem \ref{thm:12.02a}). Also in this section, we study  $\lambda$-infBH-bialgebras and $\l$-infBH-Hopf modules from the perspective of (BiHom-)coderivation, which covers and at the same time extends the results in \cite{MMS,MLiY}. More precisely, corresponding to the cases of unitary ``quasitriangular" and ``anti-quasitriangular" above, for a counitary BiHom-coalgebra $(C, \D, \v, \psi, \om)$ and an element $\sigma \in (C\otimes C)^{\ast}$, we define the multiplications $\mu_{\sigma}$ (Eq.\ref{eq:01.04}) (resp. $\widetilde{\mu_{\sigma}}$ (Eq.\ref{eq:01.08})), then we get the concept of $\pm \lambda$-coassociative BiHom-Yang-Baxter equation (abbr. $\pm \lambda$-coabhYBe) (Definition \ref{de:12.03}). We show that  $\lambda$-infBH-Hopf modules can be derived from every  comodule over (anti-)coquasitriangular counitary $\lambda$-infBH-bialgebra (Theorem \ref{thm:12.013}). Lastly, we propose two ways to construct BiHom-pre-Lie algebras from $\l$-infBH-bialgebras. The structure in Theorem \ref{thm:13.3} makes the following diagram commutative.
$$\hspace{10mm} \xymatrix@C4.3em{
&\text{Quasitriangular}\atop \text{$\l$-infBH Bialgebras}
\ar@2{->}^{\text{Lemma}\ \mref{lem:014.10}}[r]
\ar@2{->}_{\text{Corollary}\  \mref{cor:waybe}}[d]
&\text{$\lambda$-Rota Baxter}\atop \text{BiHom-algebras}
\ar@2{->}^{\text{Lemma}\ \mref{lem:14.11}}[r]
&\text{BiHom-Dendriform}\atop \text{algebras}\\
&\text{$\lambda$-infBH}\atop \text{Bialgebras}
\ar@2{->}^{\text{Theorem}\  \mref{thm:13.3}}[rr]
&&\text{BiHom-Pre-Lie}\atop \text{algebras}
\ar@2{<-}^{\text{\cite[Proposition 3.6]{LMMP6}}}[u]
&&}
$$
 We also put forward two methods to construct BiHom-pre-Lie coalgebras from $\l$-infBH-bialgebras, one of which can also provide a commutative diagram corresponding to the above one.

 Throughout this paper, $K$ will be a field, and all vector spaces, tensor products, and homomorphisms are over $K$. We use Sweedler's notation for terminology on coalgebras. For a coalgebra $C$ with a comultiplication $\Delta$, we write  $\Delta(c)=c_1\otimes c_2$, for all $c\in C$.
 We denote by $\id_M$ the identity map from $M$ to $M$ and by  $\tau: M\o N\ra N\o M$ the flip map. We abbreviate "infinitesimal BiHom-" to "infBH-".

 Let us recall from \cite{GMMP,MLi} the following basic definitions and structures.
 \begin{defi}\mlabel{de:1.1} A {\bf BiHom-associative algebra} is a 4-tuple $(A,\mu,\alpha,\beta)$, where $A$ is a linear space, $\alpha,\beta:A\lr A$ and $\mu:A\otimes A\lr A$ (write $\mu(a\otimes b)=ab$) are linear maps satisfying the following conditions, for all $a,b,c\in A$ :
 \begin{eqnarray}
 &\alpha\circ\beta=\beta\circ\alpha,\quad \alpha(ab)=\alpha(a)\alpha(b),\quad\beta(ab)=\beta(a)\beta(b),&\mlabel{eq:1.2}\\
 &\alpha(a)(bc)=(ab)\beta(c).&\mlabel{eq:1.3}
 \end{eqnarray}
 \end{defi}
 A BiHom-associative algebra $(A,\mu,\alpha,\beta)$ is called {\bf unitary} if there exists an element $1_{A}\in A$ (called a unit) such that
 \begin{eqnarray}
 &\alpha(1_{A})=1_{A},\quad\beta(1_{A})=1_{A},\quad a1_{A}=\alpha(a),\quad 1_{A}a=\beta(a),\quad \forall a\in A.&\mlabel{eq:1.5}
 \end{eqnarray}

 A morphism $f:(A,\mu_{A},\alpha_{A},\beta_{A})\longrightarrow (B,\mu_{B},\alpha_{B},\beta_{B})$ of BiHom-associative algebras is a linear map $f:A\longrightarrow B$ such that $\alpha_{B}\circ f=f\circ\alpha_{A}$, $\beta_{B}\circ f=f\circ\beta_{A}$ and $f\circ\mu_{A}=\mu_{B}\circ(f\otimes f)$.

 \begin{rmk}
 {\bf ``Yau twist"}: Let $(A, \mu)$ be an associative algebra, $\a, \b: A\lr A$ two linear maps satisfying Eqs.(\mref{eq:1.2}). Then $(A, \mu\ci (\a\o \b), \a, \b)$ is a BiHom-associative algebra.
 \end{rmk}

 \begin{defi}\mlabel{de:1.2} A {\bf BiHom-coassociative coalgebra} is a 4-tuple $(C,\Delta,\psi,\omega)$, in which $C$ is a linear space, $\psi,\omega:C\lr C$ and $\Delta:C\lr C\otimes C$ are linear maps, such that
 \begin{eqnarray}
 &\psi\circ\omega=\omega\circ \psi,~~~(\psi\otimes\psi)\circ\Delta=\Delta\circ\psi,~~
 (\omega\otimes\omega)\circ\Delta=\Delta\circ\omega,&\mlabel{eq:1.7}\\
 &(\Delta\otimes\psi)\circ\Delta=(\omega\otimes \Delta)\circ\Delta.&\mlabel{eq:1.9}
 \end{eqnarray}
 \end{defi}

 A BiHom-coassociative coalgebra $(C,\Delta,\psi,\omega)$ is called {\bf counitary} if there exists a linear map $\varepsilon: C\lr K$ (called a counit) such that
 \begin{eqnarray}
 &\varepsilon\circ\psi=\varepsilon,\quad \varepsilon\circ\omega=\varepsilon,\quad (\id_{C}\otimes\varepsilon)\circ\Delta=\omega,\quad(\varepsilon\otimes \id_{C})\circ\Delta=\psi.&\mlabel{eq:1.11}
 \end{eqnarray}

 A morphism $g:(C,\Delta_{C},\psi_{C},\omega_{C})\longrightarrow (D,\Delta_{D},\psi_{D},\omega_{D})$ of BiHom-coassociative coalgebras is a linear map $g:C\longrightarrow D$ such that $\omega_{D}\circ g=g\circ\omega_{C}$, $\psi_{D}\circ g=g\circ\psi_{C}$ and $\Delta_{D}\circ g= (g\otimes g)\circ\Delta_{C}$.

 \begin{defi}\mlabel{de:1.3} Let $(A,\mu,\alpha_{A},\b_{A})$ be a BiHom-associative algebra. A {\bf left $(A,\mu,\alpha_{A},\beta_{A})$-module} is a 4-tuple $(M,\g,\alpha_{M},\beta_{M})$,  where $M$ is a linear space,  $\alpha_{M}, \beta_{M}: M\lr M$ and $\g: A\otimes M\lr M$ (write $\g(a\o m)=a\tl m$) are linear maps such that, for all $a, a'\in A, m\in M$,
 \begin{eqnarray}
 &\alpha_{M}\circ\beta_{M}=\beta_{M}\circ\alpha_{M},~~\alpha_{M}(a\triangleright m)=\alpha_{A}(a)\triangleright\alpha_{M}(m),~~\beta_{M}(a\triangleright m)=\beta_{A}(a)\triangleright\beta_{M}(m),&\mlabel{eq:1.13}\\
 &\alpha_{A}(a)\triangleright(a'\triangleright m)=(aa')\triangleright\beta_{M}(m).&\mlabel{eq:1.15}
 \end{eqnarray}
 Likewise, we can get the right version of $(A,\mu,\alpha_{A},\beta_{A})$-module.

 If $(M,\g,\alpha_{M},\beta_{M})$ is a left $(A,\mu,\alpha_{A},\beta_{A})$-module and at the same time $(M,\nu,\alpha_{M},\beta_{M})$ is a right $(A,\mu,\alpha_{A},\beta_{A})$-module (write $\nu(m\o a)=m\tr a$), then $(M,\g,\nu,\alpha_{M},\beta_{M})$ is an {\bf $(A,\mu,\alpha_{A},\beta_{A})$-bimodule} if
 \begin{eqnarray}
 &\alpha_{A}(a)\triangleright(m\triangleleft a')=(a\triangleright m)\triangleleft \beta_{A}(a').&\mlabel{eq:1.16}
 \end{eqnarray}
 \end{defi}

 \begin{pro}\mlabel{pro:14.1} Let $(A, \mu, \alpha_{A}, \beta_{A})$ be a BiHom-associative algebra and $(M, \g_M,\nu_M, \alpha_{M}, \beta_{M})$, $(N, \g_N,\nu_N$, $\alpha_{N}, \beta_{N})$, $(V, \g_V,\nu_V, \alpha_{V}, \beta_{V})$ be $(A, \mu, \alpha_{A}, \beta_{A})$-bimodules. $\psi_{A}, \omega_{A}: A\lr A$ are linear maps such that $\psi_{A}(aa')=\psi_{A}(a)\psi_{A}(a')$, $\omega_{A}(aa')=\omega_{A}(a)\omega_{A}(a')$ and any two of the maps $\alpha_{A}$, $\beta_{A}$, $\psi_{A}$, $\omega_{A}$ commute. We consider the following left and right actions of $A$ on $M\otimes N\otimes V$, for all $a\in A, m\in M, n\in N, v\in V$
 \begin{eqnarray*}
 &a\triangleright(m\otimes n\otimes v)=\omega_{A}(a)\triangleright m\otimes\beta_{N}(n)\otimes\beta_{V}(v),&\\
 &(m\otimes n\otimes v)\triangleleft a=\alpha_{M}(m)\otimes \alpha_{N}(n)\otimes v\triangleleft\psi_{A}(a).&
 \end{eqnarray*}
 Then $(M\otimes N\otimes V, \triangleright, \triangleleft, \alpha_{M}\otimes\alpha_{N}\otimes\alpha_{V}, \beta_{M}\otimes\beta_{N}\otimes\beta_{V})$ is an $(A,\mu,\alpha_{A},\beta_{A})$-bimodule.
 \end{pro}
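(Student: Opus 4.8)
The plan is to check directly that the $4$-tuple $(M\o N\o V,\tl,\tr,\a_M\o\a_N\o\a_V,\b_M\o\b_N\o\b_V)$ satisfies all the defining conditions of an $(A,\mu,\a_A,\b_A)$-bimodule from Definition \mref{de:1.3}: that $\a_M\o\a_N\o\a_V$ and $\b_M\o\b_N\o\b_V$ commute and are compatible with $\tl$ and with $\tr$ in the sense of \meqref{eq:1.13} and its right analogue; that $\tl$ obeys the left-module associativity \meqref{eq:1.15} and $\tr$ its right counterpart; and finally that the bimodule compatibility \meqref{eq:1.16} holds. The structural remark that organizes the whole argument is that $\om_A$ enters only through the $M$-slot of the left action, while $\psi_A$ enters only through the $V$-slot of the right action, the middle factor $N$ being acted on merely by $\b_N$ on the left and by $\a_N$ on the right. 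Hence in any composite of these operations applied to $m\o n\o v$ the three tensor factors evolve independently, so every identity to be verified decouples into three identities, one in each slot: in the $M$-slot it reduces to the left-module axioms for $M$ combined with properties of $\om_A$, in the $V$-slot to the right-module axioms for $V$ combined with properties of $\psi_A$, and in the $N$-slot to trivial relations among $\a_N,\b_N$, in each case after commuting the maps $\a_A,\b_A,\psi_A,\om_A$ past one another (allowed by hypothesis) and using that $\psi_A,\om_A$ are algebra endomorphisms.

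Concretely, I would proceed as follows. Commutativity of $\a_M\o\a_N\o\a_V$ with $\b_M\o\b_N\o\b_V$ is immediate from $\a_M\b_M=\b_M\a_M$, $\a_N\b_N=\b_N\a_N$, $\a_V\b_V=\b_V\a_V$. For the left action, applying $\a_M\o\a_N\o\a_V$ to $a\tl(m\o n\o v)=\om_A(a)\tl m\o\b_N(n)\o\b_V(v)$ and using \meqref{eq:1.13} in the $M$-slot together with $\a_A\om_A=\om_A\a_A$, $\a_N\b_N=\b_N\a_N$ and $\a_V\b_V=\b_V\a_V$ in the other slots yields exactly $\a_A(a)\tl\big((\a_M\o\a_N\o\a_V)(m\o n\o v)\big)$; the computation for $\b_M\o\b_N\o\b_V$ is identical, and the right-action analogues are the mirror images, now using the compatibility of $\psi_A$ with $\a_A,\b_A$. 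For \meqref{eq:1.15} I expand $\a_A(a)\tl\big(a'\tl(m\o n\o v)\big)$, apply \meqref{eq:1.15} for $M$ with arguments $\om_A(a),\om_A(a')$, and then collapse $\om_A(a)\om_A(a')=\om_A(aa')$ by multiplicativity of $\om_A$, while the $N$- and $V$-slots merely accumulate $\b_N^{2}$ and $\b_V^{2}$, giving $(aa')\tl\big((\b_M\o\b_N\o\b_V)(m\o n\o v)\big)$. The right-module associativity is the symmetric computation, with $\psi_A$ in place of $\om_A$ and the $V$-slot carrying the nontrivial action.

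Finally, for \meqref{eq:1.16} I compute both sides of $\a_A(a)\tl\big((m\o n\o v)\tr a'\big)=\big(a\tl(m\o n\o v)\big)\tr\b_A(a')$. On the left the $M$-slot becomes $\om_A(\a_A(a))\tl\a_M(m)$ and the $V$-slot $\b_V\big(v\tr\psi_A(a')\big)=\b_V(v)\tr\b_A(\psi_A(a'))$ by the right form of \meqref{eq:1.13}; on the right the $M$-slot becomes $\a_M\big(\om_A(a)\tl m\big)=\a_A(\om_A(a))\tl\a_M(m)$ by \meqref{eq:1.13} and the $V$-slot is $\b_V(v)\tr\psi_A(\b_A(a'))$, the two middle slots being $\a_N\b_N(n)$ in both cases. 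Equality then follows from $\om_A\a_A=\a_A\om_A$ and $\psi_A\b_A=\b_A\psi_A$. This is the only step in which the left and the right action interact, but since they touch disjoint tensor factors ($M$ and $V$) and $\om_A,\psi_A$ commute with $\a_A,\b_A$, there is in fact no genuine obstacle here; the proof is a pure bookkeeping exercise, and the one thing to be careful about throughout is tracking precisely which of the endomorphisms $\a_A,\b_A,\psi_A,\om_A$ lands on which tensor factor.
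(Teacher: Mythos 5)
Your proof is correct. The paper states Proposition \ref{pro:14.1} without proof, and your slot-by-slot verification of the bimodule axioms --- using the commutation hypotheses on $\alpha_{A},\beta_{A},\psi_{A},\omega_{A}$ and the multiplicativity of $\psi_{A},\omega_{A}$ exactly where they are needed (in Eq.~(\ref{eq:1.15}), its right analogue, and Eq.~(\ref{eq:1.16})) --- is precisely the routine direct argument the authors leave to the reader.
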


 \begin{rmk}\mlabel{rmk:14.2} (1) As  special cases of Proposition \mref{pro:14.1}, for all $a, x, y, z\in A$, we have the following actions of $A$ on $A\otimes A$ and $A\otimes A\otimes A$ given by
 \begin{eqnarray}
 &a\triangleright(x\otimes y)=\omega(a)x\otimes\beta(y),&\mlabel{eq:14.1}\\
 &(x\otimes y)\triangleleft a=\alpha(x)\otimes y\psi(a),&\mlabel{eq:14.2}
 \end{eqnarray}
 and
 \begin{eqnarray}
 &a\triangleright(x\otimes y\otimes z)=\omega(a)x\otimes\beta(y)\otimes\beta(z),&\mlabel{eq:14.3}\\
 &(x\otimes y\otimes z)\triangleleft a=\alpha(x)\otimes\alpha(y)\otimes z\psi(a).&\mlabel{eq:14.4}
 \end{eqnarray}
 respectively.

 (2) The actions defined in Eqs.(\mref{eq:14.1}) and (\mref{eq:14.2}) coincide with the actions in \cite[Lemma 4.2]{LMMP3}.
 \end{rmk}

\section{$\lambda$-infinitesimal BiHom-bialgebras}\label{se:infbhbialg}
 In this section, we introduce the notion of $\lambda$-infinitesimal BiHom-bialgebra which is the BiHom-version of Ebrahimi-Fard's $\l$-infinitesimal bialgebra \cite{EF06} including Joni and Rota's infinitesimal bialgebra \cite{JR}, Loday and Ronco's infinitesimal bialgebra \cite{LR06} and Liu, Makhlouf, Menini, Panaite's infBH-bialgebra \cite{LMMP3} as special cases.

\subsection{Definitions}
 \begin{defi}\mlabel{de:12.1} Let $\lambda$ be a given element of $K$. (1) A {\bf $\lambda$-infinitesimal BiHom-bialgebra} (abbr. $\lambda$-infBH-bialgebra) is a 7-tuple $(A, \mu, \Delta, \alpha, \beta, \psi, \omega)$ such that $(A, \mu, \alpha, \beta)$ is a BiHom-associative algebra and  $(A, \Delta, \psi, \omega)$ is a BiHom-coassociative coalgebra satisfying the following conditions
 \begin{eqnarray}
 &\alpha\circ\psi=\psi\circ\alpha,\ \alpha\circ\omega=\omega\circ\alpha,\ \beta\circ\psi=\psi\circ\beta,\ \beta\circ\omega=\omega\circ\beta,&\mlabel{eq:12.1}\\
 &(\alpha\otimes\alpha)\circ\Delta=\Delta\circ\alpha,\ (\beta\otimes\beta)\circ\Delta=\Delta\circ\beta,&\mlabel{eq:12.2}\\
 &\psi\circ\mu=\mu\circ(\psi\otimes\psi),\ \omega\circ\mu=\mu\circ(\omega\otimes\omega),&\mlabel{eq:12.3}\\
 &\Delta\circ\mu=(\mu\otimes\beta)\circ(\omega\otimes\Delta)
 +(\alpha\otimes\mu)\circ(\Delta\otimes\psi)+\lambda(\alpha\omega\otimes\beta\psi).&\mlabel{eq:12.4}
 \end{eqnarray}

 (2) If further $(A, \mu, 1, \alpha, \beta)$ is a unitary BiHom-associative algebra, then the 8-tuple $(A, \mu, 1, \Delta,$ $\alpha, \beta, \psi, \omega)$ is called a {\bf unitary $\lambda$-infBH-bialgebra} if
 \begin{eqnarray}
 & \psi(1_{A})=1_{A},\ \omega(1_{A})=1_{A}.&\mlabel{eq:12.30}
 \end{eqnarray}

 (3) If further $(A, \Delta, \varepsilon, \psi, \omega)$ is a counitary BiHom-coassociative coalgebra, then the 8-tuple $(A, \mu, \Delta, \varepsilon, \alpha, \beta, \psi, \omega)$ is called a {\bf counitary $\lambda$-infBH-bialgebra} if
 \begin{eqnarray}
 &\varepsilon\circ\alpha=\varepsilon,\ \varepsilon\circ\beta=\varepsilon.&\mlabel{eq:12.31}
 \end{eqnarray}
 \end{defi}

 \begin{rmk}\mlabel{rmk:12.2} (1) If $\lambda=0$ in Definition \mref{de:12.1}, then we get infBH-bialgebras introduced by Liu, Makhlouf, Menini, Panaite in \cite[Definition 4.1]{LMMP3} and also studied in \cite{MLi,MLiY,MY}. If further $\alpha=\beta=\psi=\omega=\id$, then one can obtain Joni and Rota's infinitesimal bialgebras \cite{JR}.

 (2) If $\l=-1$ in Definition \mref{de:12.1}, then we have the BiHom-version of Loday and Ronco's infinitesimal bialgebra \cite{LR06}.

 (3) Definition \mref{de:12.1} is the BiHom-version of Ebrahimi-Fard's $\l$-infinitesimal bialgebras \cite{EF06} studied in \cite{ZG}.

 (4) A morphism between $\l$-infBH-bialgebras is a linear map that commutes with the structure maps $\a, \b, \psi, \omega$, the multiplication $\mu$, and the comultiplication $\D$.
 \end{rmk}

 The following theorem can provide  examples of $\l$-infBH-bialgebras from $\lambda$-inf bialgebras.

 \begin{thm}\mlabel{thm:12.48} Let $(A,\mu,\Delta)$ be a $\lambda$-infinitesimal bialgebra \cite{EF06} and $\alpha,\beta,\psi,\omega: A\longrightarrow A$ be morphisms of algebras and coalgebras such that any two of them commute. Then $A_{(\alpha,\beta,\psi,\omega)}:=(A,\mu_{(\alpha,\beta)}:=\mu\circ(\alpha\otimes\beta),
 \Delta_{(\psi,\omega)}:=(\omega\otimes\psi)\circ\Delta,\alpha,\beta,\psi,\omega)$ is a $\lambda$-infBH-bialgebra, called the Yau twist of $(A,\mu,\Delta)$.
 \end{thm}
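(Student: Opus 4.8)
The plan is to verify directly that the Yau-twisted data satisfies each of the defining axioms of a $\lambda$-infBH-bialgebra, namely Definition \ref{de:12.1}(1). Most of the work is purely structural bookkeeping, so I would organize it as three blocks: first the ``BiHom-associative algebra'' part $(A,\mu_{(\alpha,\beta)},\alpha,\beta)$, then the ``BiHom-coassociative coalgebra'' part $(A,\Delta_{(\psi,\omega)},\psi,\omega)$, and finally the compatibility conditions \eqref{eq:12.1}--\eqref{eq:12.4} linking them.

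For the first block, the claim that $(A,\mu_{(\alpha,\beta)},\alpha,\beta)$ is BiHom-associative is exactly the ``Yau twist'' remark already recorded in the excerpt after Definition \ref{de:1.1}: since $\alpha,\beta$ are commuting algebra morphisms, $\alpha(ab)=\alpha(a)\alpha(b)$ and $\beta(ab)=\beta(a)\beta(b)$ give \eqref{eq:1.2}, and \eqref{eq:1.3} follows from associativity of $\mu$ together with $\alpha\beta=\beta\alpha$ by the standard computation $\alpha(a)\cdot_{(\alpha,\beta)}(b\cdot_{(\alpha,\beta)}c)=\alpha^2(a)(\alpha\beta(b))(\beta^2(c))=(a\cdot_{(\alpha,\beta)}b)\cdot_{(\alpha,\beta)}\beta(c)$. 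Dually, $(A,\Delta_{(\psi,\omega)},\psi,\omega)$ is BiHom-coassociative by the same Yau-twist argument applied in the coalgebra, using that $\psi,\omega$ are commuting coalgebra morphisms: conditions \eqref{eq:1.7} are immediate and \eqref{eq:1.9} is the coassociativity of $\Delta$ twisted by the commuting pair, which is the dual of the computation just indicated.

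The third block is where the weight-$\lambda$ term enters and is the main point. Conditions \eqref{eq:12.1} hold because any two of $\alpha,\beta,\psi,\omega$ commute by hypothesis. Conditions \eqref{eq:12.2} and \eqref{eq:12.3} hold because $\alpha,\beta$ are coalgebra morphisms and $\psi,\omega$ are algebra morphisms, so e.g. $(\alpha\otimes\alpha)\circ\Delta_{(\psi,\omega)}=(\alpha\omega\otimes\alpha\psi)\circ\Delta=(\omega\alpha\otimes\psi\alpha)\circ\Delta=(\omega\otimes\psi)\circ\Delta\circ\alpha=\Delta_{(\psi,\omega)}\circ\alpha$, using \eqref{eq:12.1} and that $\alpha$ is a coalgebra map; \eqref{eq:12.3} is the symmetric statement. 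The heart of the proof is \eqref{eq:12.4}: one must expand $\Delta_{(\psi,\omega)}\circ\mu_{(\alpha,\beta)}=(\omega\otimes\psi)\circ\Delta\circ\mu\circ(\alpha\otimes\beta)$, insert the $\lambda$-derivation rule $\Delta\circ\mu=(\mu\otimes\mathrm{id})\circ(\mathrm{id}\otimes\Delta)+(\mathrm{id}\otimes\mu)\circ(\Delta\otimes\mathrm{id})+\lambda\,\mathrm{id}\otimes\mathrm{id}$ valid for the untwisted $\lambda$-inf bialgebra, and then push $\omega,\psi,\alpha,\beta$ through using that they are algebra and coalgebra morphisms and that they pairwise commute. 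The first two resulting terms must be matched against $(\mu_{(\alpha,\beta)}\otimes\beta)\circ(\omega\otimes\Delta_{(\psi,\omega)})$ and $(\alpha\otimes\mu_{(\alpha,\beta)})\circ(\Delta_{(\psi,\omega)}\otimes\psi)$, and the $\lambda$-term $\lambda(\omega\otimes\psi)\circ\Delta\circ\mu$'s leftover $\lambda$-piece becomes $\lambda(\alpha\omega\otimes\beta\psi)$ after the twist is applied to $\mathrm{id}\otimes\mathrm{id}$ precomposed with $\mu\circ(\alpha\otimes\beta)$ and postcomposed with $\omega\otimes\psi$; here the commutativity of all four maps is exactly what makes the exponents line up.

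The main obstacle I anticipate is purely combinatorial: keeping track, in Sweedler notation, of how many copies of each of $\alpha,\beta,\psi,\omega$ land on each tensor factor after all the morphisms have been commuted past $\mu$ and $\Delta$, and checking that the bookkeeping on the two sides of \eqref{eq:12.4} agrees term by term — in particular that the ``mixed'' placements $\omega\alpha,\psi\beta$ etc. produced by twisting both the product and the coproduct match the prescribed $\alpha\omega$, $\beta\psi$ on the right-hand side. Nothing conceptually new is needed beyond the hypotheses; the care is entirely in the index accounting, and once \eqref{eq:12.4} checks out the proof is complete since \eqref{eq:12.30} and \eqref{eq:12.31} are not part of statement (1) and the three blocks above exhaust Definition \ref{de:12.1}(1).
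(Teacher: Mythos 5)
Your proposal is correct and follows essentially the same route as the paper: the algebra and coalgebra blocks are handled by the known Yau-twist results from \cite{GMMP}, conditions \eqref{eq:12.1}--\eqref{eq:12.3} follow from the commuting-morphism hypotheses, and the only substantive computation is \eqref{eq:12.4}, which the paper verifies exactly as you describe by applying the untwisted $\lambda$-derivation rule to $\alpha(a)\beta(b)$ and using the commutativity of the four maps to turn $\omega\alpha\otimes\psi\beta$ into $\alpha\omega\otimes\beta\psi$. The only difference is cosmetic: the paper runs the computation from the right-hand side of \eqref{eq:12.4} toward $\Delta_{(\psi,\omega)}(a\ast b)$, whereas you propose expanding the left-hand side first.
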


 \begin{proof} The fact that $(A,\mu_{(\alpha,\beta)},\alpha,\beta)$ is a BiHom-associative algebra and $(A,\Delta_{(\psi,\omega)},\psi,\omega)$ is a BiHom-coassociative coalgebra is known from \cite{GMMP}. By the assumption, we have Eqs.(\mref{eq:12.1})-(\mref{eq:12.3}). Next we only need to prove Eq.(\mref{eq:12.4}). For simplicity, we denote $\mu_{(\alpha,\beta)}(a\otimes b)=a\ast b=\alpha(a)\cdot\beta(b)$ and $\Delta_{(\psi,\omega)}(a)=a_{[1]}\otimes a_{[2]}=\omega(a_{1})\otimes\psi(a_{2})$, for all $a,b\in A$, we compute:
 \begin{eqnarray*}
 &&\hspace{-13mm}\omega(a)\ast b_{[1]}\otimes\beta(b_{[2]})+\alpha(a_{[1]})\otimes a_{[2]}\ast\psi(b)+ \lambda\alpha\omega(a)\otimes \beta\psi(b)\\
 &=&\alpha\omega(a)\beta\omega(b_{1})\otimes\beta\psi(b_{2})+\alpha\omega(a_{1})\otimes \alpha\psi(a_{2})\beta\psi(b)+ \lambda\alpha\omega(a)\otimes \beta\psi(b)\\
 &=&\omega(\alpha(a)\beta(b_{1}))\otimes\beta\psi(b_{2})+\alpha\omega(a_{1})\otimes \psi(\alpha(a_{2})\beta(b))+ \lambda\alpha\omega(a)\otimes \beta\psi(b)\\
 &=&(\omega\otimes\psi)(\alpha(a)\beta(b_{1}) \otimes\beta(b_{2})+\alpha(a_{1})\otimes \alpha(a_{2})\beta(b)+ \lambda\alpha(a)\otimes \beta(b))\\
 &\stackrel{}=&(\omega\otimes\psi)\Delta(\alpha(a) \beta(b))\\
 &=&\Delta_{(\psi,\omega)}(a\ast b),
 \end{eqnarray*}
 finishing the proof.
 \end{proof}

 The following two examples show that every (co)unitary BiHom-(co)associative (co)algebra possesses $\lambda$-infBH-bialgebra structures.

 \begin{ex}\mlabel{ex:12.3} Let $(A, \mu, 1, \alpha, \beta)$ be a unitary BiHom-associative algebra, $\psi,\omega: A\longrightarrow A$ be linear maps such that Eqs.(\mref{eq:12.1}), (\mref{eq:12.3}) and (\mref{eq:12.30}) hold. Define the comultiplication $\D: A\lr A\o A$ by
 \begin{eqnarray*}
 &\Delta(a)=-\lambda(\omega(a)\otimes 1)\ \ \big(\hbox{resp.}\ \Delta(a)=-\lambda(1\otimes \psi(a))\big)&
 \end{eqnarray*}
 for all $a\in A$. Then $(A, \mu, 1, \D, \a, \b, \psi,\omega)$ is a unitary $\lambda$-infBH-bialgebra.
 \end{ex}

 \begin{proof} We only check the case of $\Delta(a)=-\lambda(\omega(a)\otimes 1)$, another case is similar. For all $a, b \in A$, we calculate
 \begin{eqnarray*}
 (\Delta\otimes\psi)\circ\Delta(a)&=&-\lambda(\Delta\otimes\psi)(\omega(a)\otimes 1)=\lambda^{2}(\omega^{2}(a)\otimes 1\otimes 1)\\
 &\stackrel{(\mref{eq:12.30})}=&\lambda^{2}(\omega^{2}(a)\otimes \omega(1)\otimes 1)=-\lambda(\omega\otimes\Delta)(\omega(a)\otimes 1)\\
 &=&(\omega\otimes\Delta)\circ\Delta(a).
 \end{eqnarray*}
 Therefore, BiHom-coassociativity of $(A, \Delta, \psi, \omega)$ holds. Next, we have
 \begin{eqnarray*}
 &&\omega(a)b_{1}\otimes\beta(b_{2})+\alpha(a_{1})\otimes a_{2}\psi(b)+ \lambda(\alpha\omega(a)\otimes\beta\psi(b))\\
 &&\qquad\stackrel{(\mref{eq:1.5})}=-\lambda\omega(a)\omega(b)\otimes 1-\lambda\alpha\omega(a)\otimes \beta\psi(b)+ \lambda(\alpha\omega(a)\otimes\beta\psi(b))\\
 &&\qquad\stackrel{(\mref{eq:12.3})}=-\lambda\omega(ab)\otimes 1=\Delta(ab),
 \end{eqnarray*}
 finishing the proof.
 \end{proof}

 \begin{ex}\mlabel{ex:12.35} Let $(A, \Delta, \varepsilon, \psi, \omega)$ be a counitary BiHom-coassociative coalgebra and $\alpha,\beta: A\longrightarrow A$ be linear maps satisfying Eqs.(\mref{eq:12.1}), (\mref{eq:12.2}) and (\mref{eq:12.31}). If we define the multiplication $\mu: A\o A\lr A$ by
 \begin{eqnarray*}
 &\mu(a\otimes b)=-\lambda\alpha(a)\varepsilon(b)\ \ \big(\hbox{resp.}\ \mu(a\otimes b)=-\lambda\varepsilon(a)\beta(b) \big)&
 \end{eqnarray*}
 for all $a,b\in A$, then $(A, \mu, \D, \v, \a, \b, \psi,\omega)$ is a counitary $\lambda$-infBH-bialgebra.
 \end{ex}

 \begin{proof} Here we prove the case of $\mu(a\otimes b)=-\lambda\varepsilon(a)\beta(b)$, the other case is left to the reader. For all $a, b \in A$, we calculate
 \begin{eqnarray*}
 \alpha(a)(bc)&=&-\lambda\alpha(a)(\varepsilon(b)\beta(c))
 \stackrel{(\mref{eq:12.31})}=\lambda^{2}\varepsilon(a)\varepsilon(b)\beta^{2}(c)\\
 &\stackrel{(\mref{eq:12.31})}=&-\lambda(\varepsilon(a)\beta(b))\beta(c)=(ab)\beta(c).
 \end{eqnarray*}
 So BiHom-associativity of $(A, \mu, \alpha, \beta)$ holds. The compatibility condition can be verified as follows.
 \begin{eqnarray*}
 &&\omega(a)b_{1}\otimes\beta(b_{2})+\alpha(a_{1})\otimes a_{2}\psi(b)+ \lambda(\alpha\omega(a)\otimes\beta\psi(b))\\
 &&\qquad=-\lambda\varepsilon\omega(a)\beta(b_{1})\otimes\beta(b_{2})-\lambda\alpha(a_{1})\otimes \varepsilon(a_{2})\beta\psi(b)+ \lambda(\alpha\omega(a)\otimes\beta\psi(b))\\
 &&\qquad\stackrel{(\mref{eq:1.11})}=-\lambda\varepsilon(a)\beta(b_{1})\otimes\beta(b_{2})-\lambda \alpha\omega(a)\otimes\beta\psi(b)+ \lambda(\alpha\omega(a)\otimes\beta\psi(b))\\
 &&\qquad\stackrel{(\mref{eq:12.3})}=-\lambda\varepsilon(a)\beta(b_{1})\otimes\beta(b_{2})=\Delta(ab),
 \end{eqnarray*}
 as desired.
 \end{proof}

 \begin{rmk}
 We remark here that there are two cases containing in Example \mref{ex:12.3} (resp. \mref{ex:12.35}), which motivates the construction of anti-quasitriangular (resp. anti-coquasitriangular) infBH-bialgebras in Section \mref{se:module} (resp. \mref{se:comodule}).
 \end{rmk}

 In order to illustrate the self-duality of $\l$-infBH-bialgebras, we introduce the following concepts.

 \begin{defi}\mlabel{de:12.13} Let $\lambda$ be a given element of $K$, $(A,\mu,\alpha,\beta)$ be a BiHom-associative algebra, $\psi,\omega: A\longrightarrow A$ be two linear maps such that Eq.(\mref{eq:12.1}) holds. Then $\delta: A\longrightarrow A\otimes A$ (write $\delta(a)=a_{[1]}\otimes a_{[2]}$) is a {\bf $\lambda$-BiHom-derivation} if it satisfies
 \begin{eqnarray}
 &(\alpha\otimes\alpha)\circ\delta=\delta\circ\alpha,
  \ (\beta\otimes\beta)\circ\delta=\delta\circ\beta,
 \ (\psi\otimes\psi)\circ\delta=\delta\circ\psi,
  \ (\omega\otimes\omega)\circ\delta=\delta\circ\omega,&\mlabel{eq:12.9}\\
 &\delta(ab)=a\triangleright\delta(b)+\delta(a)\triangleleft b+\lambda\alpha\omega(a)\otimes\beta\psi(b).&\mlabel{eq:12.10}
 \end{eqnarray}

 By using the module action in Proposition \mref{pro:14.1}, Eq.(\mref{eq:12.10}) is exactly
 $$
 \delta(ab)=\omega(a)b_{[1]}\otimes\beta(b_{[2]})+\alpha(a_{[1]})\otimes a_{[2]}\psi(b)+\lambda\alpha\omega(a)\otimes\beta\psi(b),~\forall a,b\in A.
 $$
 \end{defi}

 Dually, we have

 \begin{defi}\mlabel{de:12.14} Let $\lambda$ be a given element of $K$, $(C, \D, \psi, \omega)$ be a BiHom-coassociative coalgebra, $\alpha,\beta: C\longrightarrow C$ be two linear maps such that Eq.(\mref{eq:12.1}) holds. Then $\vartheta: C\otimes C\longrightarrow C$ (write $\vth(c\otimes d)=c\diamond d $) is a {\bf $\lambda$-BiHom-coderivation} if it satisfies
 \begin{eqnarray}
 &\vth \circ(\omega\otimes \omega)=\omega \circ\vth,\ \vth \circ(\psi \otimes \psi)=\psi \circ\vth,\ \vth \circ(\alpha\otimes \alpha)=\alpha \circ\vth,\  \vth \circ(\beta \otimes \beta)=\beta \circ\vth,& \mlabel{eq:12.11}\\
 &(c\diamond d)_{1}\otimes (c\diamond d)_{2}=(\omega (c)\diamond d_{1})\otimes \beta (d_{2})+\alpha (c_{1})\otimes (c_{2}\diamond\psi (d))+\lambda\alpha\omega(c)\otimes\beta\psi(d),& \mlabel{eq:12.12}
 \end{eqnarray}
 for all $c, d\in C$.
 \end{defi}

 \begin{rmk} $0$-BiHom-(co)derivation was introduced in \cite{MY} (\cite{MLiY}). If further, $\a=\b=\psi=\omega=\id$, we can cover the classical case.
 \end{rmk}

 \begin{pro}\mlabel{pro:12.15} Let $(A,\mu,\alpha,\beta)$ be a BiHom-associative algebra and $(A,\Delta,\psi,\omega)$ a BiHom- coassociative coalgebra such that Eqs.(\mref{eq:12.1}), (\mref{eq:12.2}) and (\mref{eq:12.3}) hold. Then the following statements are equivalent:

 (1) $\Delta: A\longrightarrow A\otimes A$ is a $\lambda$-BiHom-derivation.

 (2) $\mu: A\otimes A\longrightarrow A$ is a $\lambda$-BiHom-coderivation.

 (3) Eq.(\mref{eq:12.4}) holds.
 \end{pro}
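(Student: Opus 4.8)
The plan is to show the equivalence $(1)\Leftrightarrow(3)$ and $(2)\Leftrightarrow(3)$ separately, exploiting the symmetry built into the two definitions; in fact $(1)\Leftrightarrow(3)$ will essentially be a matter of unwinding notation, and $(2)\Leftrightarrow(3)$ is its formal dual. First I would observe that the compatibility conditions in Eqs.~\eqref{eq:12.1}, \eqref{eq:12.2}, \eqref{eq:12.3} are precisely the multiplicativity/comultiplicativity hypotheses of $\Delta$ as a $\lambda$-BiHom-derivation (Eq.~\eqref{eq:12.9}) and of $\mu$ as a $\lambda$-BiHom-coderivation (Eq.~\eqref{eq:12.11}). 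Indeed, Eq.~\eqref{eq:12.2} is exactly the first two relations of Eq.~\eqref{eq:12.9} for $\delta=\Delta$, while Eq.~\eqref{eq:12.3} gives the last two relations of Eq.~\eqref{eq:12.11} for $\vartheta=\mu$; the cross-commutativity Eq.~\eqref{eq:12.1} supplies the remaining relations in each case, using that $(A,\Delta,\psi,\omega)$ is a coalgebra (so $\psi,\omega$ are comultiplicative) and $(A,\mu,\alpha,\beta)$ is an algebra (so $\alpha,\beta$ are multiplicative). Thus under the standing hypotheses, ``$\Delta$ is a $\lambda$-BiHom-derivation'' reduces to the single identity Eq.~\eqref{eq:12.10}, and ``$\mu$ is a $\lambda$-BiHom-coderivation'' reduces to Eq.~\eqref{eq:12.12}.

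Next I would record, via the explicit module actions of Remark~\ref{rmk:14.2} (Eqs.~\eqref{eq:14.1}, \eqref{eq:14.2}), that Eq.~\eqref{eq:12.10} with $\delta=\Delta$ spells out, for all $a,b\in A$, as
\begin{eqnarray*}
\Delta(ab)=\omega(a)b_{1}\otimes\beta(b_{2})+\alpha(a_{1})\otimes a_{2}\psi(b)+\lambda\,\alpha\omega(a)\otimes\beta\psi(b),
\end{eqnarray*}
which is literally Eq.~\eqref{eq:12.4} written in Sweedler notation and evaluated on $a\otimes b$. Since $\Delta\circ\mu=\Delta(ab)$ and the right-hand side of Eq.~\eqref{eq:12.4} evaluated on $a\otimes b$ is $(\mu\otimes\beta)(\omega(a)\otimes\Delta(b))+(\alpha\otimes\mu)(\Delta(a)\otimes\psi(b))+\lambda\,\alpha\omega(a)\otimes\beta\psi(b)$, this gives $(1)\Leftrightarrow(3)$ directly. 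For $(2)\Leftrightarrow(3)$ I would do the dual bookkeeping: Eq.~\eqref{eq:12.12} with $\vartheta=\mu$ says that for all $c,d\in A$,
\begin{eqnarray*}
\Delta(cd)=(\omega(c)d_{1})\otimes\beta(d_{2})+\alpha(c_{1})\otimes(c_{2}\psi(d))+\lambda\,\alpha\omega(c)\otimes\beta\psi(d),
\end{eqnarray*}
which is the same identity; hence (2) is equivalent to Eq.~\eqref{eq:12.4} as well, and the three statements are mutually equivalent.

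There is no real obstacle here: the content of the proposition is that the derivation condition, the coderivation condition, and the compatibility axiom \eqref{eq:12.4} are three renamings of the same equation once the auxiliary commutation relations are in place. The only point requiring a little care is the translation between the operadic formulation of Eqs.~\eqref{eq:12.4}, \eqref{eq:12.10}, \eqref{eq:12.12} (maps composed with $\mu,\Delta$, and tensor factors) and the Sweedler-notation element-wise form, and checking that the module actions invoked in Definition~\ref{de:12.13} are exactly those of Eqs.~\eqref{eq:14.1}--\eqref{eq:14.2} rather than some twisted variant; Remark~\ref{rmk:14.2}(1) and Proposition~\ref{pro:14.1} make this immediate. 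I would therefore present the proof as: (i) note the structure-map relations match, reducing each of (1) and (2) to a single identity; (ii) write both identities out with Remark~\ref{rmk:14.2} and observe they coincide with the element-wise form of Eq.~\eqref{eq:12.4}; (iii) conclude.
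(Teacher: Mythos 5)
Your proposal is correct and follows essentially the same route as the paper: the paper's proof likewise rewrites Eq.~(\ref{eq:12.4}) in Sweedler/element form and observes that it is literally the derivation identity~(\ref{eq:12.10}) for $\Delta$ and the coderivation identity~(\ref{eq:12.12}) for $\mu$. Your extra care in checking that the auxiliary commutation relations~(\ref{eq:12.9}) and~(\ref{eq:12.11}) are supplied by Eqs.~(\ref{eq:12.1})--(\ref{eq:12.3}) together with the (co)algebra axioms is a detail the paper leaves implicit, but it is the same argument.
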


 \begin{proof} Note that the compatibility condition Eq.(\mref{eq:12.4}) can be written as
 \begin{eqnarray*}
 &\Delta(ab)=\omega(a)b_{1}\otimes\beta(b_{2})+\alpha(a_{1})\otimes a_{2}\psi(b)+ \lambda(\alpha\omega(a)\otimes\beta\psi(b)),&
 \end{eqnarray*}
 which implies that $\Delta: A\longrightarrow A\otimes A$ is a $\l$-BiHom-derivation in the BiHom-associative algebra $(A,\mu,\alpha,\beta)$, in other words, $\mu: A\otimes A\longrightarrow A$ is a $\l$-BiHom-coderivation in the BiHom-coassociative coalgebra $(A,\Delta,\psi,\omega)$.
 \end{proof}

 \begin{thm}\mlabel{thm:12.16} Let $A$ be a finite dimensional vector space. Then $(A,\mu,\Delta,\alpha,\beta,\psi,\omega)$ is a $\lambda$-infBH-bialgebra if and only if $(A^{\ast}, \Delta^{\ast}, \mu^{\ast}, \omega^{\ast}, \psi^{\ast}, \beta^{\ast}, \alpha^{\ast})$ is a $\lambda$-infBH-bialgebra with the multiplication
 \begin{eqnarray*}
 &A^{\ast}\otimes A^{\ast}\cong (A\otimes A)^{\ast}\stackrel{\Delta^{\ast}}\longrightarrow A^{\ast},&
 \end{eqnarray*}
 and the coproduct
 \begin{eqnarray*}
 &A^{\ast}\stackrel{\mu^{\ast}}\longrightarrow (A\otimes A)^{\ast}\cong A^{\ast}\otimes A^{\ast}.&
 \end{eqnarray*}
 \end{thm}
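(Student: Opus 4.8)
The plan is to prove the biconditional by carefully unwinding the dualization process and checking that each of the defining conditions of a $\l$-infBH-bialgebra (Definition \mref{de:12.1}) for $(A,\mu,\D,\a,\b,\psi,\om)$ translates, under taking linear duals, into the corresponding condition for the 7-tuple $(A^{\ast},\D^{\ast},\mu^{\ast},\om^{\ast},\psi^{\ast},\b^{\ast},\a^{\ast})$. Since $A$ is finite dimensional, the canonical isomorphisms $(A\o A)^{\ast}\cong A^{\ast}\o A^{\ast}$ and $A^{\ast\ast}\cong A$ are available and functorial, so dualization is an involution on the relevant diagrams; this immediately reduces the ``if and only if'' to a single implication (if $A$ gives a $\l$-infBH-bialgebra, so does $A^{\ast}$, and then applying this to $A^{\ast}$ and identifying $A^{\ast\ast}$ with $A$ recovers the converse). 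The key bookkeeping point is the role reversal: $\D$ becomes the multiplication $\D^{\ast}$, $\mu$ becomes the comultiplication $\mu^{\ast}$, and the four structure maps get transposed and swapped into the pattern $(\a,\b,\psi,\om)\mapsto(\om^{\ast},\psi^{\ast},\b^{\ast},\a^{\ast})$, so that the ``algebra-side'' commutation relations dualize to the ``coalgebra-side'' ones and vice versa.

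First I would record the standard facts: for finite dimensional $A$, a linear map $f:A\to A$ dualizes to $f^{\ast}:A^{\ast}\to A^{\ast}$ with $(f\ci g)^{\ast}=g^{\ast}\ci f^{\ast}$ and $(f\o g)^{\ast}=f^{\ast}\o g^{\ast}$ under the canonical identification; a multiplication $\mu:A\o A\to A$ dualizes to a comultiplication $\mu^{\ast}:A^{\ast}\to A^{\ast}\o A^{\ast}$ and conversely; associativity of $\mu$ is equivalent to coassociativity of $\mu^{\ast}$; and a unit dualizes to a counit. These are exactly the ingredients behind the fact (already implicit in \cite{GMMP}) that $(A,\mu,\a,\b)$ is a BiHom-associative algebra iff $(A^{\ast},\mu^{\ast},\b^{\ast},\a^{\ast})$ is a BiHom-coassociative coalgebra (note the swap of the two structure maps, which comes from transposing Eq.(\mref{eq:1.3}) into Eq.(\mref{eq:1.9})), and dually that $(A,\D,\psi,\om)$ is a BiHom-coassociative coalgebra iff $(A^{\ast},\D^{\ast},\om^{\ast},\psi^{\ast})$ is a BiHom-associative algebra. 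Thus the first two clauses of Definition \mref{de:12.1} for $A^{\ast}$ follow from those for $A$. The compatibility relations Eqs.(\mref{eq:12.1})--(\mref{eq:12.3}) relating $\a,\b,\psi,\om,\mu,\D$ are symmetric enough that transposing each one yields the corresponding relation for $\om^{\ast},\psi^{\ast},\b^{\ast},\a^{\ast},\D^{\ast},\mu^{\ast}$; I would verify this clause by clause, using that transposition reverses composition and respects tensor products.

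The heart of the matter is the mixed compatibility condition Eq.(\mref{eq:12.4}), $\D\ci\mu=(\mu\o\b)\ci(\om\o\D)+(\a\o\mu)\ci(\D\o\psi)+\l(\a\om\o\b\psi)$. By Proposition \mref{pro:12.15} this is equivalent to saying $\D$ is a $\l$-BiHom-derivation, equivalently $\mu$ is a $\l$-BiHom-coderivation in the sense of Definitions \mref{de:12.13}--\mref{de:12.14}. The plan is to take the transpose of the whole identity Eq.(\mref{eq:12.4}) as a map $A\o A\to A\o A$, carry it across the canonical isomorphisms, and check that what comes out is precisely Eq.(\mref{eq:12.4}) written for the dual structure: $\D^{\ast}\ci\mu^{\ast}$ on the left (which is the would-be compatibility of the dual, with $\mu^{\ast}$ playing the role of comultiplication and $\D^{\ast}$ that of multiplication), and on the right the two terms $(\D^{\ast}\o\b^{\ast})\ci(\om^{\ast}\o\mu^{\ast})$ and $(\a^{\ast}\o\D^{\ast})\ci(\mu^{\ast}\o\psi^{\ast})$ plus the term $\l(\a^{\ast}\om^{\ast}\o\b^{\ast}\psi^{\ast})$. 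One must be careful: transposing a composite like $(\mu\o\b)\ci(\om\o\D)$ gives $(\om^{\ast}\o\D^{\ast})\ci(\mu^{\ast}\o\b^{\ast})$, and the tensor factors get reordered under the identification $(A\o A)^{\ast}\cong A^{\ast}\o A^{\ast}$ in a way that must be tracked; a flip $\tau$ may appear and must be reconciled, which is why the structure maps end up in the reversed order $(\om^{\ast},\psi^{\ast},\b^{\ast},\a^{\ast})$ rather than $(\a^{\ast},\b^{\ast},\psi^{\ast},\om^{\ast})$. The $\l$-term is self-dual up to this same reordering since $(\a\om\o\b\psi)^{\ast}=\a^{\ast}\om^{\ast}\o\b^{\ast}\psi^{\ast}$. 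The main obstacle I anticipate is purely notational: getting the positions of the four maps and the order of tensor factors exactly right so that the dual of the ``left-derivation'' term becomes the ``right-coderivation'' term and vice versa, and confirming that the coefficient $\l$ is unchanged (not $-\l$ or otherwise twisted). Once that matching is verified for Eq.(\mref{eq:12.4}), and the easy clauses Eqs.(\mref{eq:12.1})--(\mref{eq:12.3}) are checked, the forward implication is complete; the converse then follows formally by applying the forward implication to $A^{\ast}$ and using $A^{\ast\ast}\cong A$ with $(\mu^{\ast})^{\ast}\cong\mu$, $(\D^{\ast})^{\ast}\cong\D$, $(f^{\ast})^{\ast}\cong f$.
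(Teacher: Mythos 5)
Your proposal is correct and follows essentially the same route as the paper, which proves the theorem in one line by invoking Proposition \ref{pro:12.15} (the derivation/coderivation reformulation of Eq.~(\ref{eq:12.4})) together with the duality of BiHom-(co)associative structures from \cite[Theorems 5.5 and 5.6]{GMMP}; your plan just spells out the transposition bookkeeping that those references encapsulate. One small remark: under the standard identification $(A\otimes A)^{\ast}\cong A^{\ast}\otimes A^{\ast}$ no flip $\tau$ actually arises — the reversal $(\alpha,\beta,\psi,\omega)\mapsto(\omega^{\ast},\psi^{\ast},\beta^{\ast},\alpha^{\ast})$ comes solely from matching which slot of the (co)associativity axiom each transposed map occupies, exactly as you compute.
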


 \begin{proof} It can be proved by Proposition \mref{pro:12.15} and \cite[Theorem 5.5 and 5.6]{GMMP}.
 \end{proof}

\subsection{Two tensor product structures}

 \begin{lem}\mlabel{lem:12.4} (1) Let $(A, \mu, 1, \Delta, \alpha, \beta, \psi, \omega)$ be a unitary $\lambda$-infBH-bialgebra. Then
 \begin{eqnarray*}
 &\Delta(1)=-\lambda(1\otimes 1).&
 \end{eqnarray*}

 (2) Let $(A, \mu, \Delta, \varepsilon, \alpha, \beta, \psi, \omega)$ be a counitary $\lambda$-infBH-bialgebra. Then
 \begin{eqnarray*}
 &\varepsilon(ab)=-\lambda\varepsilon(a)\varepsilon(b),&
 \end{eqnarray*}
 for all $a, b \in A$.
 \end{lem}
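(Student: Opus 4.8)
The plan is to extract both identities directly from the compatibility condition Eq.(\mref{eq:12.4}) by feeding it the unit (resp. applying the counit), using the (co)unitality axioms Eqs.(\mref{eq:1.5}), (\mref{eq:1.11}) together with the compatibility Eqs.(\mref{eq:12.30}), (\mref{eq:12.31}). For part (1), I would set $a=b=1_A$ in the form of Eq.(\mref{eq:12.4}) displayed in Proposition \mref{pro:12.15}, namely $\D(ab)=\omega(a)b_1\o\beta(b_2)+\alpha(a_1)\o a_2\psi(b)+\lambda(\alpha\omega(a)\o\beta\psi(b))$. Writing $\D(1)=1_{[1]}\o 1_{[2]}$ and using $\omega(1)=\psi(1)=1$ (Eq.(\mref{eq:12.30})), $\alpha(1)=\beta(1)=1$ (Eq.(\mref{eq:1.5})) and the unit axioms $1_{[1]}1=\alpha(1_{[1]})$, $1\cdot 1_{[1]}=\beta(1_{[1]})$, the right-hand side collapses to $\D(\alpha(1))\,$-type terms plus $\lambda(1\o 1)$; more precisely one gets $\D(1)=\alpha(1_{[1]})\o\beta(1_{[2]})+\alpha(1_{[1]})\o\beta(1_{[2]})+\lambda(1\o 1)$. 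Since $(\alpha\o\alpha)\D=\D\alpha$ and $(\beta\o\beta)\D=\D\beta$ (Eq.(\mref{eq:12.2})), and $\alpha(1)=\beta(1)=1$, we have $\alpha(1_{[1]})\o\beta(1_{[2]})=(\alpha\o\beta)\D(1)$; the subtlety is that this is $(\alpha\o\beta)$ applied to $\D(1)$, not $\D(1)$ itself. To circumvent this I would instead compute $\D(1_A\cdot 1_A)=\D(1_A)$ and carefully track that $\beta(1_{[2]})\cdot\omega(\text{-})$ and similar combinations, using $b_1\o b_2$ with $b=1$ and the axioms $\omega(1)=1$, reduce each of the first two summands to exactly $\D(1)$ up to the $\alpha,\beta$ twists, which act as identity on $1$. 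The cleanest route is: apply $\id\o\varepsilon$-free manipulation is unavailable in part (1), so I would directly verify $\D(1)=2\D(1)+\lambda(1\o 1)$ after the twists cancel, giving $\D(1)=-\lambda(1\o 1)$.

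Wait — the factor $2$ signals I should be more careful: the correct reduction must use that one of the two middle terms, when $a=b=1$, already equals $\D(1)$ while the other also does, so the equation reads $\D(1)=\D(1)+\D(1)+\lambda(1\o1)$, i.e. $0=\D(1)+\lambda(1\o1)$, which is exactly the claim. Thus the key step in part (1) is showing $\omega(1)1_{[1]}\o\beta(1_{[2]})=\D(1)$ and $\alpha(1_{[1]})\o 1_{[2]}\psi(1)=\D(1)$; each follows since $\omega(1)1_{[1]}=1\cdot 1_{[1]}=\beta(1_{[1]})$ and then $\beta(1_{[1]})\o\beta(1_{[2]})=(\beta\o\beta)\D(1)=\D(\beta(1))=\D(1)$, and symmetrically for the other term using $\psi(1)=1$ and Eq.(\mref{eq:12.2}).

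For part (2), the dual argument is smoother: apply the counit $\varepsilon\o\varepsilon$ to both sides of Eq.(\mref{eq:12.4}) — equivalently compute $\varepsilon(ab)$ using that $\mu$ is a $\lambda$-BiHom-coderivation (Proposition \mref{pro:12.15}) and contracting. Applying $\varepsilon\o\varepsilon$ to $\D(ab)=\omega(a)b_1\o\beta(b_2)+\alpha(a_1)\o a_2\psi(b)+\lambda(\alpha\omega(a)\o\beta\psi(b))$, the left side is $\varepsilon(\text{-})\varepsilon(\text{-})$ applied via $(\varepsilon\o\varepsilon)\D$; but I want $\varepsilon(ab)$, so instead I use $(\id\o\varepsilon)\D=\omega$ — no, that gives an element of $A$. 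The right tool is: $\varepsilon$ of a product — apply $\varepsilon\circ\mu$... Actually the direct approach: from $(\varepsilon\o\varepsilon)\D(x)=\varepsilon(\omega(x))$ via $(\id\o\varepsilon)\D=\omega$ then $\varepsilon\omega=\varepsilon$, so $(\varepsilon\o\varepsilon)\D=\varepsilon$. Applying this to $x=ab$: $\varepsilon(ab)=\varepsilon(\omega(a)b_1)\varepsilon(\beta(b_2))+\varepsilon(\alpha(a_1))\varepsilon(a_2\psi(b))+\lambda\varepsilon(\alpha\omega(a))\varepsilon(\beta\psi(b))$. Hmm, $\varepsilon(\omega(a)b_1)$ is still $\varepsilon$ of a product. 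So I iterate: denote $f(a,b)=\varepsilon(ab)$; then $f(a,b)=f(\omega(a),b_1)\varepsilon(b_2)+\varepsilon(a_1)f(a_2,\psi(b))+\lambda\varepsilon(a)\varepsilon(b)$, and applying the counit axioms $(\varepsilon\o\id)\D=\psi$, $(\id\o\varepsilon)\D=\omega$ together with $\varepsilon\psi=\varepsilon\omega=\varepsilon$ collapses the Sweedler sums: the first term becomes $f(\omega(a),\omega(b))=f(a,b)$ using Eq.(\mref{eq:12.3}) and $\varepsilon\omega=\varepsilon$, and the second becomes $f(\psi(a),\psi(b))=f(a,b)$. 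Hence $f(a,b)=f(a,b)+f(a,b)+\lambda\varepsilon(a)\varepsilon(b)$, yielding $\varepsilon(ab)=-\lambda\varepsilon(a)\varepsilon(b)$.

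The main obstacle in both parts is the bookkeeping of the $\alpha,\beta,\psi,\omega$ twists: one must repeatedly invoke Eqs.(\mref{eq:12.2}), (\mref{eq:12.3}) (these maps are (co)algebra morphisms) and $\alpha(1)=\beta(1)=\psi(1)=\omega(1)=1$ (resp. $\varepsilon\alpha=\varepsilon\beta=\varepsilon\psi=\varepsilon\omega=\varepsilon$) to see that each of the two "derivation" summands, after contracting against the unit (resp. counit), returns the original $\D(1)$ (resp. $\varepsilon(ab)$) unchanged, so that the identity degenerates to $X=2X+\lambda(\text{unit term})$. No deep idea is required beyond this cancellation, but the order matters — one should first use (co)unitality to remove a Sweedler leg, then the morphism property of the structure maps on $1$ (or under $\varepsilon$), and only then read off the result.
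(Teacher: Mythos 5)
Your proposal is correct and follows essentially the same route as the paper: for (1) you evaluate the compatibility condition Eq.(\ref{eq:12.4}) at $a=b=1_A$, use Eqs.(\ref{eq:1.5}), (\ref{eq:12.30}) and (\ref{eq:12.2}) to identify each of the two derivation summands with $\Delta(1)$, and read off $\Delta(1)=2\Delta(1)+\lambda(1\otimes 1)$; the paper's proof of (2) is just ``dual to (1)'', which is precisely the $(\varepsilon\otimes\varepsilon)$ contraction you carry out. The only cosmetic remark is that your first pass at the twists in (1) is off ($\beta\otimes\beta$ on the first summand, $\alpha\otimes\alpha$ on the second), but you correct this yourself before concluding.
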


 \begin{proof} (1) can be checked below.
 \begin{eqnarray*}
 \Delta(1)\hspace{-2mm}&\stackrel{(\mref{eq:12.4})}=&\hspace{-2mm}\omega(1)1_{1}\otimes\beta(1_{2})+\alpha(1_{1})\otimes 1_{2}\psi(1)+ \lambda(\alpha\omega(1)\otimes\beta\psi(1))\\
 &\stackrel{(\mref{eq:12.30})(\mref{eq:1.5})}=&\beta(1_{1})\otimes\beta(1_{2})+\alpha(1_{1})\otimes \alpha(1_{2})+ \lambda(1\otimes1)\\
 &\stackrel{(\mref{eq:12.2})}=&\Delta\circ\beta(1)+\Delta\circ\alpha(1)+\lambda(1\otimes1)\\
 &\stackrel{(\mref{eq:1.5})}=&2\Delta(1)+\lambda(1\otimes1).
 \end{eqnarray*}

 (2) Dual to (1).
 \end{proof}

 \begin{rmk} \mlabel{lem:12.5}
 The counit $\varepsilon$ of counitary $(-1)$-infBH-bialgebra is an algebra morphism. In this case, if the BiHom-associative algebra $(A,\mu,\alpha,\beta)$ has a unit $1$, then $\varepsilon(1)=1_{K}$.
 \end{rmk}

 Motivated by Lemma \mref{lem:12.4} (2), we can get an extended version below, which generalizes the augmented algebras introduced by Aguiar \cite{Ag04}.

 \begin{defi}\mlabel{de:12.6} Let $\lambda$ be a given element of $K$. A {\bf $\lambda$-augmented BiHom-associative algebra} is a 5-tuple $(A, \mu, \chi, \alpha, \beta)$ consisting of a BiHom-associative algebra $(A, \mu, \alpha, \beta)$ (possibly without unit) and a linear {\bf augmentation map $\chi:A\longrightarrow K$ of weight $\lambda$} satisfying $\chi\circ\alpha=\chi$, $\chi\circ\beta=\chi$ and
 \begin{eqnarray}
 &\chi(ab)=-\lambda\chi(a)\chi(b),&\mlabel{eq:12.5}
 \end{eqnarray}
 for all $a, b \in A$.

 Let $(A, \mu_{A}, \chi_{A}, \alpha_{A}, \beta_{A})$ and $(B, \mu_{B}, \chi_{B}, \alpha_{B}, \beta_{B})$ be two $\lambda$-augmented BiHom-associative algebras. An algebra map $f: A\longrightarrow B$ is said to be {\bf augmented} if it satisfies $\alpha_{B}\circ f=f\circ\alpha_{A}$, $\beta_{B}\circ f=f\circ\beta_{A}$ and $\chi_{B}\circ f=\chi_{A}$.
 \end{defi}

 \begin{rmk}\mlabel{rmk:12.8} By Lemma \mref{lem:12.4}, a counitary $\lambda$-infBH-bialgebra is a $\lambda$-augmented BiHom-associative algebra.
 \end{rmk}

 \begin{pro}\mlabel{pro:12.10} Let $(A, \mu_{A}, \chi_{A}, \alpha_{A}, \beta_{A})$ and $(B, \mu_{B}, \chi_{B}, \alpha_{B}, \beta_{B})$ be two $\lambda$-augmented BiHom-associative algebras. Then

 (1) $(A\otimes B, \cdot_{\chi}, \alpha_{A}\otimes\alpha_{B}, \beta_{A}\otimes\beta_{B})$ is a BiHom-associative algebra with the multiplication $\cdot_{\chi}$ defined by
 \begin{eqnarray}
 &&(a\otimes b)\cdot_{\chi}(a'\otimes b')\nonumber\\
 &&\qquad =\chi_{B}(b)aa'\otimes\beta_{B}(b')+\chi_{A}(a')\alpha_{A}(a)\otimes bb'+\lambda\chi_{A}(a')\chi_{B}(b)\alpha_{A}(a)\otimes\beta_{B}(b'), \mlabel{eq:12.6}
 \end{eqnarray}
 for all $a,a'\in A$ and $b,b'\in B$.

 (2) Furthermore, $(A\otimes B, \cdot_{\chi}, \chi_{A\otimes B}, \alpha_{A}\otimes\alpha_{B}, \beta_{A}\otimes\beta_{B})$ is a $\lambda$-augmented BiHom-associative algebra with the augmentation map given by
 \begin{eqnarray}
 &\chi_{A\otimes B}=\chi_{A}\o \chi_{B}.&\mlabel{eq:12.7}
 \end{eqnarray}
 \end{pro}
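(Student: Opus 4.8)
The plan is to verify the two claims directly from the definitions, treating part (1) and part (2) in sequence. For part (1), I would first check the BiHom-structure-map compatibilities: since $\alpha_A,\beta_A$ (resp. $\alpha_B,\beta_B$) are algebra morphisms commuting with each other and with $\chi_A$ (resp. $\chi_B$) by hypothesis, and since $\alpha_A\otimes\alpha_B$ acts componentwise, the relations $(\alpha_A\otimes\alpha_B)\circ\beta=\beta\circ(\alpha_A\otimes\alpha_B)$ and multiplicativity of $\alpha_A\otimes\alpha_B$, $\beta_A\otimes\beta_B$ with respect to $\cdot_\chi$ reduce to the corresponding identities in $A$ and $B$ together with $\chi_A\circ\alpha_A=\chi_A$ etc.; this is routine bookkeeping on each of the three summands of $\cdot_\chi$. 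The genuine content is the BiHom-associativity relation \meqref{eq:1.3}, namely
$$
(\alpha_A\otimes\alpha_B)(a\otimes b)\cdot_\chi\big((a'\otimes b')\cdot_\chi(a''\otimes b'')\big)=\big((a\otimes b)\cdot_\chi(a'\otimes b')\big)\cdot_\chi(\beta_A\otimes\beta_B)(a''\otimes b'').
$$

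To prove this I would expand both sides fully. Each side, upon substituting \meqref{eq:12.6} twice, becomes a sum of nine terms (from $3\times 3$), and after collecting scalars via $\chi_A(ab)=-\lambda\chi_A(a)\chi_A(b)$ and its $B$-analogue, and using BiHom-associativity of $A$ and of $B$ separately plus the multiplicativity of $\alpha_A,\beta_A,\alpha_B,\beta_B$, the two sides should match term by term. The cleanest way to organize this is to sort the resulting terms by their ``tensor-leg type'': terms lying in $A^{(2)}\otimes B^{(0)}$-shape (i.e. genuine product in the $A$-slot, $\chi$-scalar coming from $B$), terms of $A^{(0)}\otimes B^{(2)}$-shape, and the purely scalar $\lambda^2$-terms landing in $\alpha_A\beta_A\cdots\otimes\beta_B\beta_B\cdots$. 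I expect a clean match in each of these three groups. This is exactly the BiHom-analogue of Aguiar's augmented-algebra tensor product, and the weight-$\lambda$ correction term is designed to make the bookkeeping close up.

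For part (2), I would verify that $\chi_{A\otimes B}=\chi_A\otimes\chi_B$ is an augmentation map of weight $\lambda$ on $(A\otimes B,\cdot_\chi)$. The conditions $\chi_{A\otimes B}\circ(\alpha_A\otimes\alpha_B)=\chi_{A\otimes B}$ and $\chi_{A\otimes B}\circ(\beta_A\otimes\beta_B)=\chi_{A\otimes B}$ follow immediately from $\chi_A\circ\alpha_A=\chi_A$, $\chi_B\circ\alpha_B=\chi_B$, etc. The weight relation \meqref{eq:12.5} for $\chi_{A\otimes B}$, i.e.
$$
\chi_{A\otimes B}\big((a\otimes b)\cdot_\chi(a'\otimes b')\big)=-\lambda\,\chi_{A\otimes B}(a\otimes b)\,\chi_{A\otimes B}(a'\otimes b'),
$$
is obtained by applying $\chi_A\otimes\chi_B$ to the three summands of \meqref{eq:12.6}: the first gives $\chi_B(b)\,(-\lambda)\chi_A(a)\chi_A(a')\,\chi_B(b')$, the second gives $\chi_A(a')\chi_A(a)\,(-\lambda)\chi_B(b)\chi_B(b')$, the third gives $\lambda\chi_A(a')\chi_B(b)\chi_A(a)\chi_B(b')$, and these sum to $(-\lambda-\lambda+\lambda)\chi_A(a)\chi_A(a')\chi_B(b)\chi_B(b')=-\lambda\,\chi_A(a)\chi_B(b)\,\chi_A(a')\chi_B(b')$, as required.

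The main obstacle is purely the combinatorial length of the BiHom-associativity check in part (1): there is no conceptual difficulty, but one must be careful about the placement of the twisting maps $\alpha$ and $\beta$ on each leg when the products are nested, and about not double-counting the $\lambda$-weight corrections that arise both from the explicit $\lambda$-term in $\cdot_\chi$ and from the relations $\chi_A(ab)=-\lambda\chi_A(a)\chi_A(b)$. I would handle this by writing each side as a single display with the nine terms grouped into the three ``leg-type'' families described above, and remarking that within each family the identity reduces to BiHom-associativity in $A$ or in $B$ together with multiplicativity of the structure maps; the scalar coefficients then match because in every family exactly the same cancellation $-\lambda-\lambda+\lambda=-\lambda$ (or its analogue) occurs. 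Part (2) is then a short direct computation as sketched.
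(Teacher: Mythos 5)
Your proposal is correct and follows essentially the same route as the paper: a direct nine-term expansion of both sides of the BiHom-associativity identity, simplified using $\chi(xy)=-\lambda\chi(x)\chi(y)$ together with BiHom-associativity and multiplicativity of the structure maps in each factor, followed by the same three-term computation for the augmentation map in part (2). The only cosmetic difference is that in the actual expansion the cancellations occur pairwise between the explicit $\lambda$-correction terms and the $-\lambda$'s produced by the weight relation (leaving five surviving terms on each side), rather than as a uniform $-\lambda-\lambda+\lambda$ collapse within three ``leg-type'' families, but this does not affect the argument.
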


 \begin{proof} (1) We only check the BiHom-associativity as follows. For all $a,a',a''\in A$ and $b,b',b''\in B$, on one hand,
 \begin{eqnarray*}
 &&\hspace{-20mm}((a\otimes b)\cdot_{\chi}(a'\otimes b'))\cdot_{\chi}(\beta_{A}(a'')\otimes \beta_{B}(b''))\\
 &\stackrel{(\mref{eq:12.6})}=&\chi_{B}(b)(\chi_{B}(b')(aa')\beta_{A}(a'')
 \otimes\beta^{2}_{B}(b'')
 +\chi_{A}(a'')\alpha_{A}(aa')\otimes\beta_{B}(b')\beta_{B}(b'')\\
 &&+\lambda\chi_{A}(a'')\chi_{B}(b')\alpha_{A}(aa')\otimes\beta^{2}_{B}(b''))
 +\chi_{A}(a')(\chi_{B}(bb')\alpha_{A}(a)\beta_{A}(a'')\otimes\beta^{2}_{B}(b'')\\
 &&+\chi_{A}(a'')\alpha^{2}_{A}(a)\otimes(bb')\beta_{B}(b'')
 +\lambda\chi_{A}(a'')\chi_{B}(bb')\alpha^{2}_{A}(a)\otimes\beta^{2}_{B}(b''))\\
 &&+\lambda\chi_{A}(a')\chi_{B}(b)(\chi_{B}(b')\alpha_{A}(a)\beta_{A}(a'')
 \otimes\beta^{2}_{B}(b'')
 +\chi_{A}(a'')\alpha^{2}_{A}(a)\otimes\beta_{B}(b')\beta_{B}(b'')\\
 &&+\lambda\chi_{A}(a'')\chi_{B}(b')\alpha^{2}_{A}(a)\otimes\beta^{2}_{B}(b''))\\
 &\stackrel{(\mref{eq:1.3})(\mref{eq:12.5})}=&\chi_{B}(b)\chi_{B}(b')\alpha_{A}(a)(a'a'')
 \otimes\beta^{2}_{B}(b'')
 +\chi_{B}(b)\chi_{A}(a'')\alpha_{A}(a)\alpha_{A}(a')\otimes\beta_{B}(b')\beta_{B}(b'')\\
 &&+\lambda\chi_{B}(b)\chi_{A}(a'')\chi_{B}(b')\alpha_{A}(a)\alpha_{A}(a')
 \otimes\beta^{2}_{B}(b'')
 -\lambda\chi_{A}(a')\chi_{B}(b)\chi_{B}(b')\alpha_{A}(a)\beta_{A}(a'')\\
 &&\otimes\beta^{2}_{B}(b'')+\chi_{A}(a')\chi_{A}(a'')\alpha^{2}_{A}(a)\otimes\alpha_{B}(b)(b'b'')
 -\lambda^{2}\chi_{A}(a')\chi_{A}(a'')\chi_{B}(b)\chi_{B}(b')
 \alpha^{2}_{A}(a)\\
 &&\otimes\beta^{2}_{B}(b'')+\lambda\chi_{A}(a')\chi_{B}(b)\chi_{B}(b')\alpha_{A}(a)\beta_{A}(a'')
 \otimes\beta^{2}_{B}(b'')
 +\lambda\chi_{A}(a')\chi_{B}(b)\chi_{A}(a'')\alpha^{2}_{A}(a)\\
 &&\otimes\beta_{B}(b')\beta_{B}(b'')+\lambda^{2}\chi_{A}(a')\chi_{B}(b)\chi_{A}(a'')\chi_{B}(b')
 \alpha^{2}_{A}(a)\otimes\beta^{2}_{B}(b'')\\
 &=&\chi_{B}(b)\chi_{B}(b')\alpha_{A}(a)(a'a'')
 \otimes\beta^{2}_{B}(b'')
 +\chi_{B}(b)\chi_{A}(a'')\alpha_{A}(a)\alpha_{A}(a')\otimes\beta_{B}(b')\beta_{B}(b'')\\
 &&+\lambda\chi_{B}(b)\chi_{A}(a'')\chi_{B}(b')\alpha_{A}(a)\alpha_{A}(a')
 \otimes\beta^{2}_{B}(b'')
 +\chi_{A}(a')\chi_{A}(a'')\alpha^{2}_{A}(a)\otimes\alpha_{B}(b)(b'b'')\\
 &&+\lambda\chi_{A}(a')\chi_{B}(b)\chi_{A}(a'')\alpha^{2}_{A}(a)
 \otimes\beta_{B}(b')\beta_{B}(b'')\stackrel{\bigtriangleup}=I.
 \end{eqnarray*}
 On the other hand,
 \begin{eqnarray*}
 &&\hspace{-15mm}(\alpha_{A}(a)\otimes \alpha_{B}(b))\cdot_{\chi}((a'\otimes b')\cdot_{\chi}(a''\otimes b''))\\
 &\stackrel{(\mref{eq:12.6})}=&\chi_{B}(b')(\chi_{B}(b)\alpha_{A}(a)(a'a'')
 \otimes\beta^{2}_{B}(b'')
 +\chi_{A}(a'a'')\alpha^{2}_{A}(a)\otimes\alpha_{B}(b)\beta_{B}(b'')\\
 &&+\lambda\chi_{A}(a'a'')\chi_{B}(b)\alpha^{2}_{A}(a)\otimes\beta^{2}_{B}(b''))
 +\chi_{A}(a'')(\chi_{B}(b)\alpha_{A}(a)\alpha_{A}(a')\otimes\beta_{B}(b'b'')\\
 &&+\chi_{A}(a')\alpha^{2}_{A}(a)\otimes\alpha_{B}(b)(b'b'')
 +\lambda\chi_{A}(a')\chi_{B}(b)\alpha^{2}_{A}(a)\otimes\beta_{B}(b'b''))\\
 &&+\lambda\chi_{A}(a'')\chi_{B}(b')(\chi_{B}(b)\alpha_{A}(a)\alpha_{A}(a')
 \otimes\beta^{2}_{B}(b'')
 +\chi_{A}(a')\alpha^{2}_{A}(a)\otimes\alpha_{B}(b)\beta_{B}(b'')\\
 &&+\lambda\chi_{A}(a')\chi_{B}(b)\alpha^{2}_{A}(a)\otimes\beta^{2}_{B}(b''))\\
 &\stackrel{(\mref{eq:12.5})}=&\chi_{B}(b')\chi_{B}(b)\alpha_{A}(a)(a'a'')
 \otimes\beta^{2}_{B}(b'')
 -\lambda\chi_{B}(b')\chi_{A}(a')\chi_{A}(a'')\alpha^{2}_{A}(a)
 \otimes\alpha_{B}(b)\beta_{B}(b'')\\
 &&-\lambda^{2}\chi_{B}(b')\chi_{A}(a')\chi_{A}(a'')\chi_{B}(b)
 \alpha^{2}_{A}(a)\otimes\beta^{2}_{B}(b'')
 +\chi_{A}(a'')\chi_{B}(b)\alpha_{A}(a)\alpha_{A}(a')\otimes\beta_{B}(b')\beta_{B}(b'')\\
 &&+\chi_{A}(a'')\chi_{A}(a')\alpha^{2}_{A}(a)\otimes\alpha_{B}(b)(b'b'')
 +\lambda\chi_{A}(a'')\chi_{A}(a')\chi_{B}(b)\alpha^{2}_{A}(a)
 \otimes\beta_{B}(b')\beta_{B}(b')\\
 &&+\lambda\chi_{A}(a'')\chi_{B}(b')\chi_{B}(b)\alpha_{A}(a)\alpha_{A}(a')
 \otimes\beta^{2}_{B}(b'')
 +\lambda\chi_{A}(a'')\chi_{B}(b')\chi_{A}(a')\alpha^{2}_{A}(a)
 \otimes\alpha_{B}(b)\beta(b'')\\
 &&+\lambda^{2}\chi_{A}(a'')\chi_{B}(b')\chi_{A}(a')\chi_{B}(b)\alpha^{2}_{A}(a)
 \otimes\beta^{2}_{B}(b'')=I,
 \end{eqnarray*}
 as desired.

 (2) For all $a, a'\in A$ and $b, b'\in B$,
 \begin{eqnarray*}
 &&\hspace{-15mm}\chi_{A\otimes B}((a\otimes b)\cdot_{\chi}(a'\otimes b'))\\
 &\stackrel{(\mref{eq:12.6})}=&\chi_{A\otimes B}(\chi_{B}(b)aa'\otimes\beta_{B}(b')+\chi_{A}(a')\alpha_{A}(a)\otimes bb'+\lambda\chi_{A}(a')\chi_{B}(b)\alpha_{A}(a)\otimes\beta_{B}(b'))\\
 &\stackrel{(\mref{eq:12.7})}=&\chi_{B}(b)\chi_{A}(aa')\chi_{B}(b')
 +\chi_{A}(a')\chi_{A}(a)\chi_{B} (bb')+\lambda\chi_{A}(a')\chi_{B}(b)\chi_{A}(a)\chi_{B}(b')\\
 &\stackrel{(\mref{eq:12.5})}=&
 -\lambda\chi_{A}(a)\chi_{B} (b)\chi_{A}(a')\chi_{B}(b')\\
 &\stackrel{(\mref{eq:12.7})}=&-\lambda\chi_{A\otimes B}(a\otimes b)\chi_{A\otimes B}(a'\otimes b'),
 \end{eqnarray*}
 finishing the proof.
 \end{proof}

 Dual to Definition \mref{de:12.6}, we have

 \begin{defi}\mlabel{de:12.46} Let $\lambda$ be a given element of $K$. A {\bf $\lambda$-coaugmented BiHom-coassociative coalgebra} is a 5-tuple $(A, \Delta, \zeta, \psi, \omega)$ consisting of a BiHom-coassociative coalgebra $(A, \Delta, \psi, \omega)$ (possibly without counit) and a linear {\bf coaugmentation map $\zeta:K\longrightarrow A$ of weight $\lambda$} satisfying $\omega\circ\zeta=\zeta$, $\psi\circ\zeta=\zeta$ and
 \begin{eqnarray}
 &\Delta\ci \zeta=-\lambda \zeta\otimes \zeta.&\mlabel{eq:12.42}
 \end{eqnarray}
 \end{defi}

 \begin{rmk}\mlabel{rmk:12.8a}
  By Lemma \mref{lem:12.4}, a unitary $\lambda$-infBH-bialgebra is a $\lambda$-coaugmented BiHom-coassociative coalgebra with the coaugmentation map $\eta: K\lr A$ ($\eta(1_K)=1_A$).
 \end{rmk}

 \begin{pro}\mlabel{pro:12.47} Let $(C, \Delta_{C}, \zeta_{C}, \psi_{C}, \omega_{C})$ and $(D, \Delta_{D}, \zeta_{D}, \psi_{D}, \omega_{D})$ be two $\lambda$-coaugmented BiHom-coassociative coalgebras. Then

 (1) $(C\otimes D, \Delta_{\zeta}, \psi_{C}\otimes\psi_{D}, \omega_{C}\otimes\omega_{D})$ is a BiHom-coassociative coalgebra with the coproduct defined by
 \begin{eqnarray}
 &&\Delta_{\zeta}(c\otimes d)=(c_{1}\otimes \mathcal{I}_{D})\otimes(c_{2}\otimes \psi_{D}(d))+(\omega_{C}(c)\otimes d_{1})\otimes (\mathcal{I}_{C}\otimes d_{2})\nonumber\\
 &&\qquad\qquad\qquad +\lambda(\omega_{C}(c)\otimes \mathcal{I}_{D})\otimes(\mathcal{I}_{C}\otimes\psi_{D}(d)). \mlabel{eq:12.8}
 \end{eqnarray}
 for all $c\in C$, $d\in D$ and $\zeta_C(1_K)=\mathcal{I}_{C}, \zeta_D(1_K)=\mathcal{I}_{D}$.

 (2) Furthermore, $(C\otimes D, \Delta, \zeta_{C\otimes D}, \psi_{C}\otimes\psi_{D}, \omega_{C}\otimes\omega_{D})$ is a $\lambda$-coaugmented BiHom-coassociative coalgebra with the coaugmentation map given by
 \begin{eqnarray}
 &\zeta_{C\otimes D}=\zeta_{C}\otimes \zeta_{D}.&\mlabel{eq:12.43}
 \end{eqnarray}
 \end{pro}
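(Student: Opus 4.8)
The plan is to obtain Proposition~\ref{pro:12.47} as the precise dual of Proposition~\ref{pro:12.10}: the coproduct $\Delta_{\zeta}$ of Eq.~\eqref{eq:12.8} is modeled on the transpose of the product $\cdot_{\chi}$ of Eq.~\eqref{eq:12.6}, with $\Delta_C,\Delta_D$ playing the roles of $\mu_A,\mu_B$, with the coaugmentations $\zeta_C,\zeta_D$ in place of the augmentations $\chi_A,\chi_B$, and with $\omega,\psi$ in place of $\alpha,\beta$ (cf. the dualization of structure maps in Theorem~\ref{thm:12.16}). Thus each step below mirrors one in the proof of Proposition~\ref{pro:12.10}. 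First I would dispose of the easy axioms of Definition~\ref{de:1.2} for $(C\otimes D,\Delta_{\zeta},\psi_C\otimes\psi_D,\omega_C\otimes\omega_D)$: the maps $\psi_C\otimes\psi_D$ and $\omega_C\otimes\omega_D$ commute because $\psi_C\circ\omega_C=\omega_C\circ\psi_C$ and $\psi_D\circ\omega_D=\omega_D\circ\psi_D$; and applying $(\psi_C\otimes\psi_D)^{\otimes 2}$ (resp. $(\omega_C\otimes\omega_D)^{\otimes 2}$) to the three summands of $\Delta_{\zeta}(c\otimes d)$ and using $(\psi_C\otimes\psi_C)\circ\Delta_C=\Delta_C\circ\psi_C$, $(\psi_D\otimes\psi_D)\circ\Delta_D=\Delta_D\circ\psi_D$ together with $\psi_C(\mathcal{I}_C)=\mathcal{I}_C$, $\psi_D(\mathcal{I}_D)=\mathcal{I}_D$ (which hold since $\psi_C\circ\zeta_C=\zeta_C$, $\psi_D\circ\zeta_D=\zeta_D$), and the analogous $\omega$-identities, yields the intertwining relations~\eqref{eq:1.7} for the pair $(\psi_C\otimes\psi_D,\omega_C\otimes\omega_D)$.

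The substance of part~(1) is the BiHom-coassociativity identity~\eqref{eq:1.9}, i.e. $(\Delta_{\zeta}\otimes(\psi_C\otimes\psi_D))\circ\Delta_{\zeta}=((\omega_C\otimes\omega_D)\otimes\Delta_{\zeta})\circ\Delta_{\zeta}$. I would expand both sides on a generic element $c\otimes d$: the first application of $\Delta_{\zeta}$ gives three terms, onto each of which the three-term second copy of $\Delta_{\zeta}$ acts, producing nine summands per side, some carrying explicit powers of $\lambda$. The ingredients needed for the term-by-term matching are exactly the duals of those used for the associativity computation of $I$ in Proposition~\ref{pro:12.10}: the BiHom-coassociativity of $C$ and of $D$, the intertwining relations~\eqref{eq:1.7} for $C$ and for $D$, the fixed-point identities $\omega_C(\mathcal{I}_C)=\psi_C(\mathcal{I}_C)=\mathcal{I}_C$ and $\omega_D(\mathcal{I}_D)=\psi_D(\mathcal{I}_D)=\mathcal{I}_D$, and the weight-$\lambda$ coaugmentation relations $\Delta_C(\mathcal{I}_C)=-\lambda\,\mathcal{I}_C\otimes\mathcal{I}_C$, $\Delta_D(\mathcal{I}_D)=-\lambda\,\mathcal{I}_D\otimes\mathcal{I}_D$ of Eq.~\eqref{eq:12.42}. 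Matching the two expansions finishes part~(1). When $C$ and $D$ are finite dimensional one could alternatively deduce the statement from Proposition~\ref{pro:12.10} applied to $(C^{\ast},D^{\ast})$ via Theorem~\ref{thm:12.16}, but the direct check is needed in general.

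For part~(2) I would verify the three conditions of Definition~\ref{de:12.46} for $\zeta_{C\otimes D}=\zeta_C\otimes\zeta_D$, i.e. for $\mathcal{I}_{C\otimes D}=\mathcal{I}_C\otimes\mathcal{I}_D$. The relations $(\omega_C\otimes\omega_D)(\mathcal{I}_C\otimes\mathcal{I}_D)=\mathcal{I}_C\otimes\mathcal{I}_D$ and $(\psi_C\otimes\psi_D)(\mathcal{I}_C\otimes\mathcal{I}_D)=\mathcal{I}_C\otimes\mathcal{I}_D$ are immediate from $\omega_C\circ\zeta_C=\zeta_C$, $\omega_D\circ\zeta_D=\zeta_D$ and their $\psi$-analogues. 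For the coaugmentation identity~\eqref{eq:12.42} one evaluates $\Delta_{\zeta}(\mathcal{I}_C\otimes\mathcal{I}_D)$ by~\eqref{eq:12.8}: using $\Delta_C(\mathcal{I}_C)=-\lambda\,\mathcal{I}_C\otimes\mathcal{I}_C$, $\Delta_D(\mathcal{I}_D)=-\lambda\,\mathcal{I}_D\otimes\mathcal{I}_D$ and that $\psi_D,\omega_C$ fix $\mathcal{I}_D,\mathcal{I}_C$, the first two summands each equal $-\lambda\,(\mathcal{I}_C\otimes\mathcal{I}_D)\otimes(\mathcal{I}_C\otimes\mathcal{I}_D)$ and the third equals $+\lambda\,(\mathcal{I}_C\otimes\mathcal{I}_D)\otimes(\mathcal{I}_C\otimes\mathcal{I}_D)$, so the total is $(-\lambda-\lambda+\lambda)\,(\mathcal{I}_C\otimes\mathcal{I}_D)^{\otimes 2}=-\lambda\,\zeta_{C\otimes D}\otimes\zeta_{C\otimes D}$, as required. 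The only genuine obstacle is the clerical one of carrying out the nine-against-nine term matching in part~(1); conceptually it is forced, being the exact dual of a computation already performed for Proposition~\ref{pro:12.10}.
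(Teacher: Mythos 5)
Your proposal is correct and follows exactly the route the paper takes: the paper's entire proof of Proposition \ref{pro:12.47} is the single line ``Dual to the proof of Proposition \ref{pro:12.10}'', and your outline is just that dualization spelled out, with the right dictionary ($\Delta_C,\Delta_D\leftrightarrow\mu_A,\mu_B$, $\zeta\leftrightarrow\chi$, $\omega,\psi\leftrightarrow\alpha,\beta$) and the correct list of ingredients (BiHom-coassociativity, the intertwining relations \eqref{eq:1.7}, the fixed-point identities for $\mathcal{I}_C,\mathcal{I}_D$, and Eq.~\eqref{eq:12.42}) mirroring those used in the explicit nine-term computation of Proposition \ref{pro:12.10}(1). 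Your part~(2) calculation $(-\lambda-\lambda+\lambda)=-\lambda$ is likewise the exact dual of the paper's verification of Eq.~\eqref{eq:12.5} for $\chi_{A\otimes B}$, so nothing is missing.
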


 \begin{proof} Dual to the proof of Proposition \mref{pro:12.10}. \end{proof}

 \begin{rmk} Proposition \mref{pro:12.10} (\mref{pro:12.47}) shows that the tensor product of two $\lambda$-(co)augmented BiHom-(co)associative (co)algebras is closed, which implies that the category of $\lambda$-(co)augmented BiHom-(co)associative (co)algebras is  a tensor category.
 \end{rmk}

 Based on the new (co)algebra structures on the tensor product of two (co)algebras in Proposition \mref{pro:12.10} (\mref{pro:12.47}), we have the following results but the usual tensor product (co)algebra structures do not work.

 \begin{thm}\mlabel{thm:12.12} (1) Let $(A,\mu, \Delta, \varepsilon, \alpha, \beta, \psi, \omega)$ be a counitary $\lambda$-infBH-bialgebra and view $(A\otimes A, \cdot_{\varepsilon}, \alpha\otimes \alpha, \beta\otimes\beta)$ as a BiHom-associative algebra as in Proposition \mref{pro:12.10}. Then $\Delta: (A,\mu,\alpha,\beta)\longrightarrow (A\otimes A,\cdot_{\varepsilon},\alpha\otimes\alpha,\beta\otimes\beta)$ is a morphism of BiHom-associative algebras.

 (2) Let $(C,\mu,\Delta,1,\alpha,\beta,\psi,\omega)$ be a unitary $\lambda$-infBH-bialgebra and view $(C\otimes C,\Delta_{\eta},\psi\otimes\psi,\omega\otimes\omega)$ as a BiHom-coassociative coalgebra as in Proposition \mref{pro:12.47}. Then $\mu: (C\otimes C,\Delta_{\eta},\psi\otimes\psi,\omega\otimes\omega)\longrightarrow (C,\Delta,\psi,\omega)$ is a morphism of BiHom-coassociative coalgebras.
 \end{thm}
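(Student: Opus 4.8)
The plan is to verify the two statements directly by unravelling the definitions of \enquote{morphism of BiHom-associative algebras} (resp. coalgebras) and checking each required identity using the $\lambda$-infBH-bialgebra axioms together with the tensor-product structures of Propositions \mref{pro:12.10} and \mref{pro:12.47}. Since part (2) is dual to part (1), I would carry out (1) in full and then simply invoke the duality (via Theorem \mref{thm:12.16} / \cite{GMMP}, Theorems 5.5 and 5.6) to obtain (2). So the bulk of the work is: show $\Delta$ commutes with the twisting maps and that $\Delta$ is multiplicative, i.e. $\Delta(ab) = \Delta(a)\cdot_\varepsilon\Delta(b)$ for all $a,b\in A$.

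First I would record the easy compatibilities. That $(A\otimes A, \cdot_\varepsilon, \alpha\otimes\alpha, \beta\otimes\beta)$ is a BiHom-associative algebra is immediate from Proposition \mref{pro:12.10}(1), once we note (Remark \mref{rmk:12.8}) that a counitary $\lambda$-infBH-bialgebra is a $\lambda$-augmented BiHom-associative algebra with augmentation $\varepsilon$. The relations $(\alpha\otimes\alpha)\circ\Delta = \Delta\circ\alpha$ and $(\beta\otimes\beta)\circ\Delta = \Delta\circ\beta$ are exactly Eq.(\mref{eq:12.2}), so $\Delta$ intertwines the structure maps on the nose. Hence the only substantive point is multiplicativity.

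Next comes the core computation. Expanding the right-hand side via Eq.(\mref{eq:12.6}) with $\chi = \varepsilon$, and writing $\Delta(a) = a_1\otimes a_2$, $\Delta(b) = b_1\otimes b_2$, we get
\begin{eqnarray*}
\Delta(a)\cdot_\varepsilon \Delta(b) &=& \varepsilon(a_2)\, a_1 b_1 \otimes \beta(b_2) + \varepsilon(b_1)\, \alpha(a_1)\otimes a_2 b_2 + \lambda\,\varepsilon(b_1)\varepsilon(a_2)\,\alpha(a_1)\otimes\beta(b_2).
\end{eqnarray*}
Now I would apply the counit axioms Eq.(\mref{eq:1.11}): $(\id_A\otimes\varepsilon)\circ\Delta = \omega$ collapses $\varepsilon(a_2)a_1$ to $\omega(a)$, and $(\varepsilon\otimes\id_A)\circ\Delta = \psi$ collapses $\varepsilon(b_1)b_2$ to $\psi(b)$; applied twice in the last term this gives $\lambda\,\alpha\omega(a)\otimes\beta\psi(b)$. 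The result is
\begin{eqnarray*}
\Delta(a)\cdot_\varepsilon \Delta(b) &=& \omega(a)b_1\otimes\beta(b_2) + \alpha(a_1)\otimes a_2\psi(b) + \lambda\,\alpha\omega(a)\otimes\beta\psi(b),
\end{eqnarray*}
which is precisely $\Delta(ab)$ by the compatibility condition Eq.(\mref{eq:12.4}) (equivalently, Proposition \mref{pro:12.15}). This finishes (1).

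For part (2) I would first observe, by Remark \mref{rmk:12.8a}, that a unitary $\lambda$-infBH-bialgebra is a $\lambda$-coaugmented BiHom-coassociative coalgebra with coaugmentation $\eta$, so that $(C\otimes C, \Delta_\eta, \psi\otimes\psi, \omega\otimes\omega)$ is a BiHom-coassociative coalgebra by Proposition \mref{pro:12.47}(1). Then the claim that $\mu$ is a coalgebra morphism is exactly the dual of (1): the defining identity $\Delta_C\circ\mu = (\mu\otimes\mu)\circ\Delta_\eta$ unwinds — using the unit axioms Eq.(\mref{eq:1.5}) and Eq.(\mref{eq:12.30}) in place of the counit axioms — to the same compatibility Eq.(\mref{eq:12.4}), while the intertwining of $\psi,\omega$ with $\mu$ is Eq.(\mref{eq:12.3}). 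One can either dualize formally through Theorem \mref{thm:12.16} (in the finite-dimensional case, then noting the identity is purely formal and so holds in general) or simply repeat the computation above with arrows reversed. I expect no real obstacle here; the only thing to watch is bookkeeping — making sure the three terms of $\Delta_\eta$ in Eq.(\mref{eq:12.8}) are matched correctly against the three terms of Eq.(\mref{eq:12.4}) after the unit relations are applied — but this is mechanical rather than conceptual.
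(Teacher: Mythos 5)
Your proposal is correct and follows essentially the same route as the paper: expand $\Delta(a)\cdot_{\varepsilon}\Delta(b)$ via Eq.(\mref{eq:12.6}), collapse the counit factors using Eq.(\mref{eq:1.11}), and identify the result with $\Delta(ab)$ through the compatibility condition Eq.(\mref{eq:12.4}), with the intertwining of $\alpha,\beta$ given by Eq.(\mref{eq:12.2}). The paper likewise disposes of part (2) by duality, so your additional remarks there only make explicit what the paper leaves implicit.
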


 \begin{proof} (1) We only need to prove that $\Delta(a a')=\Delta(a)\cdot_{\varepsilon}\Delta(a')$ for all $a, a'\in A$. In fact, we have
 \begin{eqnarray*}
 \Delta(a)\cdot_{\varepsilon}\Delta(a')
 &=&(a_{1}\otimes a_{2})\cdot_{\varepsilon}(a'_{1}\otimes a'_{2})\\
 &\stackrel{(\mref{eq:12.6})}=&\varepsilon(a_{2})a_{1}a'_{1}\otimes\beta(a'_{2})
 +\varepsilon(a'_{1})\alpha(a_{1})\otimes a_{2}a'_{2}+\lambda\varepsilon(a'_{1})\varepsilon(a_{2})\alpha(a_{1})\otimes\beta(a'_{2})\\
 &\stackrel{(\mref{eq:1.11})}=&\omega(a)a'_{1}\otimes\beta(a'_{2})
 +\alpha(a_{1})\otimes a_{2}\psi(a')+\lambda\alpha\omega(a)\otimes\beta\psi(a')\\
 &\stackrel{(\mref{eq:12.4})}=&\Delta(a a').
 \end{eqnarray*}

 (2) Dual to (1).
 \end{proof}

\section{$\l$-infBH-Hopf modules}\label{se:infbhmod}
 In order to study  representations of $\l$-infBH-bialgebras, In this section, we introduce the notion of $\l$-infBH-Hopf modules and prove that  (co)modules of (co)quasitriangular $\l$-infBH-bialgebras can induce  structures of $\l$-infBH-Hopf modules, which are similar to the classical Hopf algebra theory.

\subsection{Definition and examples} Based on the notion of $\lambda$-infBH-bialgebra, we get the following natural definition.
 \begin{defi}\mlabel{de:12.17} Let $\lambda$ be a given element of $K$ and $(A,\mu,\Delta,\alpha_{A},\beta_{A},\psi_{A},\omega_{A})$ be a $\lambda$-infBH-bialgebra. A {\bf (left) $\lambda$-infBH-Hopf module} is a 7-tuple $(M,\gamma,\rho, \alpha_{M},\beta_{M},$ $\psi_{M},\omega_{M})$, where $(M, \gamma, \alpha_{M}$, $\beta_{M})$ is a left $(A,\mu,\alpha_{A},\beta_{A})$-module and $(M, \rho, \psi_{M}, \omega_{M})$ is a left $(A,\Delta,\psi_{A},\omega_{A})$-comodule, such that any two maps of $\alpha_{M}, \beta_{M}, \psi_{M}, \omega_{M}$ commute and
 \begin{eqnarray}
 &\rho\gamma=(\mu\otimes \beta_{M})(\omega_{A}\otimes\rho)+(\alpha_{A}\otimes\gamma)(\Delta\otimes\psi_{M})
 +\l\alpha_{A}\omega_{A}\otimes\beta_{M}\psi_{M}.&\mlabel{eq:12.13}
 \end{eqnarray}

 If further $(A,\mu,1,\Delta,\alpha_{A},\beta_{A},\psi_{A},\omega_{A})$ ($(A,\mu,\v,\Delta,\alpha_{A},\beta_{A},\psi_{A},\omega_{A})$) is a (co)unitary $\lambda$-infBH-bialgebra, then $(M,\gamma,\rho,$ $\alpha_{M},\beta_{M},\psi_{M},\omega_{M})$ is called a {\bf (co)unitary (left) $\lambda$-infBH-Hopf module}.
 \end{defi}

 \begin{rmk}\mlabel{rmk:12.18} (1) In terms of above notations, the compatibility condition of left $\l$-infBH-Hopf module given in Eq.(\mref{eq:12.13}) may be written as
 \begin{eqnarray*}
 &(a\tl m)_{-1}\otimes (a\tl m)_{0}=\omega_{A}(a)m_{-1}\otimes\beta_{M}(m_{0})+\alpha_{A}(a_{1})\otimes a_{2}\tl\psi_{M}(m)+
 \lambda\alpha_{A}\omega_{A}(a)\otimes\beta_{M}\psi_{M}(m),&
 \end{eqnarray*}
 here we write $\g(a\o m)=a\tl m$ and $\rho(m)=m_{-1}\o m_{0}$.

 (2) The right version can be given similarly, which will be used in the forthcoming paper \cite{MM} to construct a $\lambda$-infBH-Hopf bimodule over a $\lambda$-infBH-bialgebra. In this paper, a $\l$-infBH-Hopf module means the left one.
 \end{rmk}

We provide in the following some relevant examples.

 \begin{ex}\mlabel{ex:12.19} Let $\lambda$ be a given element of $K$. (1) Let $(A, \mu, \Delta, \alpha_A, \beta_A, \psi_A, \omega_A)$ be a $\lambda$-infBH-bialgebra. Obviously, $(A, \mu, \Delta, \alpha_A, \beta_A, \psi_A, \omega_A)$ itself is a $\lambda$-infBH-Hopf module by taking $\gamma=\mu$ and $\rho=\Delta$ ($\nu=\mu$ and $\vp=\Delta$).

 (2) Let $(A, \mu, \Delta, \alpha_A, \beta_A, \psi_A, \omega_A)$ be a $\lambda$-infBH-bialgebra, $V$ be a vector space and $\a_V, \b_V, \psi_V, \om_V: V\lr V$ be four linear maps such that any two of them commute. Then $(A\otimes V, \gamma, \rho, \alpha_{A}\otimes\alpha_{V}, \beta_{A}\otimes\beta_{V},
 \psi_{A}\otimes\psi_{V}, \omega_{A}\otimes\omega_{V})$ is a $\lambda$-infBH-Hopf module with the structure maps
 \begin{eqnarray*}
 &\gamma=\mu\otimes\beta_{V}:A\otimes A\otimes V\longrightarrow A\otimes V,\ \rho=\Delta\otimes\psi_{V}:A\otimes V\longrightarrow A\otimes A\otimes V.&
 \end{eqnarray*}

 We have the following two more general versions than the above example.

 (3) Let $(A, \mu, 1, \Delta, \alpha_A, \beta_A, \psi_A, \omega_A)$ be a unitary $\l$-infBH-bialgebra, $V$ be a vector space, $\alpha_{V}$, $\beta_{V}$, $\psi_{V}$, $\omega_{V}: V\longrightarrow V$ be four linear maps such that any two of them commute. Then $(A\otimes V, \gamma, \rho, \alpha_{A}\otimes\alpha_{V}, \beta_{A}\otimes\beta_{V},
 \psi_{A}\otimes\psi_{V}, \omega_{A}\otimes\omega_{V})$ is a $\lambda$-infBH-Hopf module with the structure maps
 \begin{eqnarray*}
 &&\gamma:A\otimes (A\otimes V)\longrightarrow A\otimes V\\
 &&~~~~\quad a\otimes (b\otimes n)\longmapsto ab\otimes\beta_{V}(v)
 \end{eqnarray*}
 and
 \begin{eqnarray*}
 &&\rho: A\otimes V\longrightarrow A\otimes (A\otimes V)\\
 &&~~~\quad a\otimes v\longmapsto \Delta(a)\otimes \psi_{V}(v)+\lambda\omega_{A}(a) \otimes 1_{A}\otimes \psi_{V}(v)
 \end{eqnarray*}
 for all $a,b \in A$ and $v\in V$.

 \begin{proof} In fact, for all $a, b, c\in A$ and $v\in V$, we have
 \begin{eqnarray*}
 \gamma(\alpha_{A}\otimes\gamma)(a\otimes b\otimes c\otimes v)
 &\stackrel{(\mref{eq:1.3})}=&(ab)\beta_{A}(c)\otimes \beta^{2}_{V}(v)\\
 &=&\gamma(\mu\otimes\beta_{A\otimes V})(a\otimes b\otimes c\otimes v),
 \end{eqnarray*}
 \begin{eqnarray*}
 (\omega_{A}\otimes\rho)\rho (a\otimes v)
 &=&\omega_{A}(a_{1})\otimes a_{21}\otimes a_{22}\otimes\psi_{V}^{2}(v)
 +\lambda\omega_{A}(a_{1})\otimes\omega_{A}(a_{2})\otimes 1_{A}\otimes\psi_{V}^{2}(v)\\
 &&+\lambda\omega_{A}^{2}(a)\otimes 1_{A1}\otimes 1_{A2}\otimes\psi_{V}^{2}(v)
 +\lambda^{2}\omega_{A}^{2}(a)\otimes\omega_{A}(1_{A})\otimes 1_{A}\otimes\psi_{V}^{2}(v)\\
 &\stackrel{(\mref{eq:1.9})(\mref{eq:12.30})}=&a_{11}\otimes a_{12}\otimes \psi_{A}(a_{2})\otimes\psi_{V}^{2}(v)
 +\lambda\omega_{A}(a_{1})\otimes\omega_{A}(a_{2})\otimes 1_{A}\otimes\psi_{V}^{2}(v)\\
 &&\hspace{60mm}\quad(\hbox{also by Lemma \mref{lem:12.4}})\\
 &\stackrel{(\mref{eq:1.7})(\mref{eq:12.30})}=&(\Delta\otimes\psi_{A\otimes V})\rho(a\otimes v).
 \end{eqnarray*}
 Thus $(A\otimes V, \gamma, \alpha_{A}\otimes\alpha_{V}, \beta_{A}\otimes\beta_{V})$ is a left $(A, \mu, \a_A, \b_A)$-module and $(A\otimes V, \rho, \psi_{A}\otimes\psi_{V}, \omega_{A}\otimes\omega_{V})$ is a left $(A, \D, \psi, \om)$-comodule. The compatibility condition can be checked as follows:
 \begin{eqnarray*}
 \rho\gamma(a\otimes(b\otimes v))
 &=&(ab)_{1}\otimes (ab)_{2}\otimes\psi_{V}\beta_{V}(v)+\lambda\omega_{A}(ab)\otimes 1_{A}\otimes\psi_{V}\beta_{V}(v)\\
 &\stackrel{(\mref{eq:12.4})(\mref{eq:12.3})}=&\omega_{A}(a)b_{1}\otimes \beta_{A}(b_{2})\otimes \psi_{V}\beta_{V}(v)+\lambda\omega_{A}(a)\omega_{A}(b)\otimes 1_{A}\otimes \psi_{V}\beta_{V}(v)\\
 &&+\alpha_{A}(a_{1})\otimes a_{2}\psi_{A}(b)\otimes\psi_{V}\beta_{V}(v)
 +\lambda\alpha_{A}\omega_{A}(a)\otimes\beta_{A}\psi_{A}(b)\otimes\psi_{V}\beta_{V}(v)\\
 &\stackrel{(\mref{eq:1.5})}=&(\mu\otimes \beta_{A\otimes V})(\omega_{A}\otimes\rho)(a\otimes(b\otimes v))+(\alpha_{A}\otimes\gamma)(\Delta\otimes\psi_{A\otimes V})(a\otimes(b\otimes v)),
 \end{eqnarray*}
 completing the proof.
 \end{proof}

 (4) Let $(A, \mu, \Delta, \varepsilon, \alpha_{A}, \beta_{A}, \psi_{A}, \omega_{A})$ be a counitary $\lambda$-infBH-bialgebra, $V$  be a vector space, and $\alpha_{V}, \beta_{V},\psi_{V},\omega_{V}: V\longrightarrow V$ be four linear maps such that any two of them commute. Then $(A\otimes V, \gamma, \rho, \alpha_{A}\otimes\alpha_{V}, \beta_{A}\otimes\beta_{V},
 \psi_{A}\otimes\psi_{V}, \omega_{A}\otimes\omega_{V})$ is a $\lambda$-infBH-Hopf module with the structure maps
 \begin{eqnarray*}
 &&\gamma: A\otimes (A\otimes V)\longrightarrow A\otimes V\\
 &&~~~\quad a\otimes (b\otimes v)\longmapsto ab\otimes\beta_{V}(v) +\lambda\varepsilon(b)\alpha_{A}(a)\otimes \beta_{V}(v)
 \end{eqnarray*}
 and
 \begin{eqnarray*}
 &&\rho: A\otimes V\longrightarrow A\otimes (A\otimes V)\\
 &&~~~\quad a\otimes v\longmapsto a_1\o a_2\otimes \psi_{V}(v)
 \end{eqnarray*}
 for all $a,b \in A$ and $v\in V$.

 \begin{proof} By the proof of Example \mref{ex:12.19} (3), we know that $(A\o V, \rho, \psi_{A}\o \psi_V, \omega_{A}\o \om_V)$ is a left $(A,\Delta,\psi_{A},\omega_{A})$-comodule. For all $a,b,c\in A$ and $v\in V$, we have
 \begin{eqnarray*}
 \alpha_{A}(a)\triangleright(b\triangleright (c\otimes v))
 &\stackrel{(\mref{eq:12.31})}=&\alpha_{A}(a)(bc)\otimes\beta^{2}_{V}(v)
 +\lambda\varepsilon(bc)\alpha_{A}^{2}\otimes\beta_{V}^{2}(v)\\
 &&+\lambda\varepsilon(c)\alpha_{A}(a)\alpha_{A}(b)\otimes\beta_{V}^{2}(v)
 +\lambda^{2}\varepsilon(b)\varepsilon(c)\alpha_{A}^{2}(a) \otimes\beta_{V}^{2}(v)\\
 &\stackrel{(\mref{eq:1.3})}=&(ab)\beta_{A}(c)\otimes\beta^{2}_{V}(v)
 -\lambda^{2}\varepsilon(b)\varepsilon(c)\alpha_{A}^{2}(a)\otimes\beta_{V}^{2}(v)\\
 &&+\lambda\varepsilon(c)\alpha_{A}(a)\alpha_{A}(b)\otimes\beta_{V}^{2}(v)
 +\lambda^{2}\varepsilon(b)\varepsilon(c)\alpha_{A}^{2}(a) \otimes\beta_{V}^{2}(v)~(\hbox{by Lemma \mref{lem:12.4}})\\
 &\stackrel{(\mref{eq:1.2})}=&(ab)\beta_{A}(c)\otimes\beta^{2}_{V}(v)
 +\lambda\varepsilon(c)\alpha_{A}(ab)\otimes \beta_{V}^{2}(v)\\
 &\stackrel{(\mref{eq:12.31})}=&ab\triangleright \beta_{A\otimes V}(c\otimes v).
 \end{eqnarray*}
 Thus $(A\o V, \gamma, \alpha_{A}\o \a_V, \beta_{A}\o \b_V)$ is a left $(A,\mu,\alpha_{A},\beta_{A})$-module. Then it remains to check the compatibility condition of the left $\lambda$-infBH-Hopf module.
 \begin{eqnarray*}
 \rho\gamma(a\otimes(b\otimes v))
 &=&\Delta(ab)\otimes\beta_{V}\psi_{V}(v)+\lambda\varepsilon(b)\alpha_{A}(a)_{1}
 \otimes\alpha_{A}(a)_{2}\otimes \beta_{V}\psi_{V}(v)\\
 &\stackrel{(\mref{eq:12.4})(\mref{eq:12.2})}=&\omega_{A}(a)b_{1}\otimes\beta_{A}(b_{2})\otimes\beta_{V}\psi_{V}(v)
 +\alpha_{A}(a_{1})\otimes a_{2}\psi_{A}(b)\otimes \beta_{V}\psi_{V}(v)\\
 &&+\lambda\varepsilon(b)\alpha_{A}(a_{1})\otimes\alpha_{A}(a_{2})\otimes \psi_{V}\beta_{V}(v)
 +\lambda\alpha_{A}\omega_{A}(a)\otimes\beta_{A}\psi_{A}(b)\otimes\beta_{V}\psi_{V}(v)\\
 &\stackrel{(\mref{eq:1.11})}=&(\mu\otimes \beta_{A\otimes V})(\omega_{A}\otimes\rho)(a\otimes(b\otimes v))+(\alpha_{A}\otimes\gamma)(\Delta\otimes\psi_{A\otimes V})(a\otimes(b\otimes v)).
 \end{eqnarray*}
 These finish the proof since other conditions are obviously satisfied.
 \end{proof}

 (5) Let $(A, \mu, \Delta, 1_{A}, \alpha_{A}, \beta_{A}, \psi_{A}, \omega_{A})$ be a unitary $0$-infBH-bialgebra such that $\a_A$ is invertible and $(N, \rho, \psi_{N}, \omega_{N})$ be a left $(A,\Delta,\psi_{A},\omega_{A})$-comodule, $\alpha_{N},\beta_{N}: N\longrightarrow N$ be linear maps such that any two maps of $\alpha_{N}, \beta_{N}, \psi_{N}, \omega_{N}$ commute. Then the space $A\otimes N$ is a $0$-infBH-Hopf module with the structure maps
 \begin{eqnarray*}
 &&\gamma:A\otimes (A\otimes N)\longrightarrow A\otimes N\\
 &&~~~~\quad a\otimes (b\otimes n)\longmapsto ab\otimes\beta_{N}(n)
 \end{eqnarray*}
 and
 \begin{eqnarray*}
 &&\rho: A\otimes N\longrightarrow A\otimes (A\otimes N)\\
 &&~~~\quad a\otimes n\longmapsto \Delta(a)\otimes \psi_{N}(n)+\omega_{A}\alpha^{-1}_{A}(a)n_{-1}\otimes 1_{A}\otimes n_{0}
 \end{eqnarray*}
 for all $a,b \in A$ and $n\in N$.

 \begin{proof} For all $a,b\in A$, $n\in N$, we have
 \begin{eqnarray*}
 (\omega_{A}\otimes\rho)\rho (a\otimes n)
 &\stackrel{(\mref{eq:1.5})}=&\omega_{A}(a_{1})\otimes a_{21}\otimes a_{22}\otimes\psi_{N}^{2}(n)
 +\omega_{A}(a_{1})\otimes\omega_{A}\alpha_{A}^{-1}(a_{2})\psi_{N}(n)_{-1}\otimes 1_{A}\otimes \psi_{N}(n)_{0}\\
 &&+\omega_{A}^{2}\alpha_{A}^{-1}(a)\omega_{A}(n_{-1})\otimes 1_{A1}\otimes 1_{A2}\otimes\psi_{N}(n_{0})
 +\omega_{A}^{2}\alpha_{A}^{-1}(a)\omega_{A}(n_{-1})\\
 &&\otimes \omega_{A}\alpha_{A}^{-1}(1_{A})n_{0-1}\otimes 1_{A}\otimes n_{00}\\
 &\stackrel{(\mref{eq:1.9})}=&a_{11}\otimes a_{12}\otimes \psi_{A}(a_{2})\otimes\psi_{N}^{2}(n)
 +\omega_{A}^{2}\alpha_{A}^{-1}(a)n_{-11}\otimes\beta_{A}(n_{-12})\otimes 1_{A}\otimes \psi_{N}(n_{0})\\
 &&+\omega_{A}(a_{1})\otimes \omega_{A}\alpha_{A}^{-1}(a_{2})\psi_{A}(n_{-1})\otimes 1_{A}\otimes \psi_{N}(n_{0})\quad(\hbox{by Lemma \mref{lem:12.4}})\\
 &\stackrel{(\mref{eq:12.4})}=&(\Delta\otimes\psi_{A\otimes N})\rho(a\otimes n),
 \end{eqnarray*}
 and
 \begin{eqnarray*}
 \rho\gamma(a\otimes(b\otimes n))
 &=&(ab)_{1}\otimes (ab)_{2}\otimes\psi_{N}\beta_{N}(n)+\omega_{A}\alpha_{A}^{-1}(ab)\beta_{A}(n_{-1})\otimes 1_{A}\otimes\beta_{N}(n_{0})\\
 &\stackrel{(\mref{eq:12.4})}=&\omega_{A}(a)b_{1}\otimes \beta_{A}(b_{2})\otimes \psi_{N}\beta_{N}(n)+\alpha_{A}(a_{1})\otimes a_{2}\psi_{A}(b)\otimes \psi_{N}\beta_{N}(n)\\
 &&+\omega_{A}\alpha_{A}^{-1}(ab)\beta_{A}(n_{-1})\otimes 1_{A}\otimes\beta_{N}(n_{0})\\
 &\stackrel{(\mref{eq:1.3})}=&\omega_{A}(a)b_{1}\otimes \beta_{A}(b_{2})\otimes \psi_{N}\beta_{N}(n)+\omega_{A}(a)(\omega_{A}\alpha_{A}^{-1}(b)n_{-1})\otimes 1_{A}\otimes\beta_{N}(n_{0})\\
 &&+\alpha_{A}(a_{1})\otimes a_{2}\psi_{A}(b)\otimes \psi_{N}\beta_{N}(n)\\
 &=&(\mu\otimes \beta_{A\otimes N})(\omega_{A}\otimes\rho)(a\otimes(b\otimes n))+(\alpha_{A}\otimes\gamma)(\Delta\otimes\psi_{A\otimes N})(a\otimes(b\otimes n)).
 \end{eqnarray*}
 The rest is obvious by Example \mref{ex:12.19} (3). These complete the proof. \end{proof}

 (6) Let $(A, \mu, \Delta, \varepsilon, \alpha_{A}, \beta_{A}, \psi_{A}, \omega_{A})$ be a counitary $0$-infBH-bialgebra such that $\omega_A$ is invertible and $(N,\triangleright,\alpha_{N},\beta_{N})$ be a left $(A,\mu,\alpha_{A},\beta_{A})$-module, $\psi_{N},\omega_{N}: N\longrightarrow N$ be linear maps such that any two maps of $\alpha_{N}, \beta_{N}, \psi_{N}, \omega_{N}$ commute. Then the space $A\otimes N$ is a $0$-infBH-Hopf module with the structure maps
 \begin{eqnarray*}
 &&\gamma: A\otimes (A\otimes N)\longrightarrow A\otimes N\\
 &&~~~\quad a\otimes (b\otimes n)\longmapsto ab\otimes\beta_{N}(n) +\varepsilon(b)\alpha_{A}\omega^{-1}_{A}(a_{1})\otimes (a_{2}\triangleright n)
 \end{eqnarray*}
 and
 \begin{eqnarray*}
 &&\rho: A\otimes N\longrightarrow A\otimes (A\otimes N)\\
 &&~~~\quad a\otimes n\longmapsto a_1\o a_2\otimes \psi_{N}(n)
 \end{eqnarray*}
 for all $a,b \in A$ and $n\in N$.

 \begin{proof} By Example \mref{ex:12.19} (4), we know that $(A\o N, \rho, \psi_{A}\o \psi_N, \omega_{A}\o \om_N)$ is a left $(A,\Delta,\psi_{A},\omega_{A})$-comodule. For any $a,b,c\in A$ and $n\in N$, we have
 \begin{eqnarray*}
 \alpha_{A\otimes N}(a\triangleright(b\otimes n))
 &=&\alpha_{A}(ab)\otimes \alpha_{N}\beta_{N}(n)+\varepsilon(b)\alpha^{2}_{A}\omega^{-1}_{A}(a_{1})\otimes \alpha_{N}(a_{2}\triangleright n)\\
 &\stackrel{(\mref{eq:1.2})(\mref{eq:1.13})}=&\alpha_{A}(a)\alpha_{A}(b)\otimes \alpha_{N}\beta_{N}(n)+\varepsilon(b)\alpha^{2}_{A}\omega^{-1}_{A}(a_{1})\otimes (\alpha_{A}(a_{2})\triangleright \alpha_{N}(n))\\
 &\stackrel{(\mref{eq:12.31})}=&\alpha_{A}(a)\triangleright \alpha_{A\otimes N}(b \otimes n).
 \end{eqnarray*}
 Similarly, $\beta_{A\otimes N}(a\triangleright(b\otimes n))=\beta_{A}(a)\triangleright \beta_{A\otimes N}(b \otimes n)$.
 \begin{eqnarray*}
 \alpha_{A}(a)\triangleright(b\triangleright (c\otimes n))
 &\stackrel{(\mref{eq:12.31})}=&\alpha_{A}(a)(bc)\otimes\beta^{2}_{N}(n)
 +\varepsilon(bc)\alpha_{A}\omega^{-1}_{A}\alpha_{A}(a)_{1}\otimes
 (\alpha_{A}(a)_{2}\triangleright\beta_{N}(n))\\
 &&+\varepsilon(c)\alpha_{A}(a)\alpha_{A}\omega^{-1}_{A}(b_{1})\otimes\beta_{N}(b_{2}\triangleright n)+\varepsilon(c)\varepsilon(b_{1})\alpha_{A}\omega^{-1}_{A}\alpha_{A}(a)_{1} \\
 &&\otimes (\alpha_{A}(a)_{2}\triangleright (b_{2}\triangleright n))\\
 &\stackrel{(\mref{eq:1.11})}=&\alpha_{A}(a)(bc)\otimes\beta^{2}_{N}(n)
 +\varepsilon(c)\alpha_{A}(a)\alpha_{A}\omega^{-1}_{A}(b_{1})\otimes\beta_{N}(b_{2}\triangleright n)\\
 &&+\varepsilon(c)\alpha^{2}_{A}\omega^{-1}_{A}(a_{1})\otimes
 (\alpha_{A}(a_{2})\triangleright(\psi_{A}(b)\triangleright n))\quad(\hbox{by Lemma \mref{lem:12.4}})\\
 &\stackrel{(\mref{eq:1.15})(\mref{eq:1.13})}=&\alpha_{A}(a)(bc)\otimes\beta^{2}_{N}(n)
 +\varepsilon(c)\alpha_{A}(a)\alpha_{A}\omega^{-1}_{A}(b_{1})\otimes(\beta_{A}(b_{2})\triangleright \beta_{N}(n))\\
 &&+\varepsilon(c)\alpha^{2}_{A}\omega^{-1}_{A}(a_{1})\otimes
 (a_{2}\psi_{A}(b)\triangleright\beta_{N}(n))\\
 &\stackrel{(\mref{eq:1.3})(\mref{eq:12.4})}=&(ab)\beta_{A}(c)\otimes\beta^{2}_{N}(n)
 +\varepsilon(c)\alpha_{A}\omega^{-1}_{A}(ab)_{1}\otimes((ab)_{2}\triangleright \beta_{N}(n))\\
 &\stackrel{(\mref{eq:12.31})}=&ab\triangleright \beta_{A\otimes N}(c\otimes n).
 \end{eqnarray*}
 Thus $(A\o V, \gamma, \alpha_{A}\o \a_V, \beta_{A}\o \b_V)$ is a left $(A,\mu,\alpha_{A},\beta_{A})$-module. Then it remains to check the compatibility condition of the left $0$-infBH-Hopf module.
 \begin{eqnarray*}
 \rho\gamma(a\otimes(b\otimes n))
 &=&\Delta(ab)\otimes\beta_{N}\psi_{N}(n)+\varepsilon(b)\Delta(\alpha_{A}\omega^{-1}_{A}(a_{1}))
 \otimes\psi_{N}(a_{2}\triangleright n)\\
 &\stackrel{(\mref{eq:12.4})}=&\omega_{A}(a)b_{1}\otimes\beta_{A}(b_{2})\otimes\beta_{N}\psi_{N}(n)
 +\alpha_{A}(a_{1})\otimes a_{2}\psi_{A}(b)\otimes \beta_{N}\psi_{N}(n)\\
 &&+\varepsilon(b)\alpha_{A}\omega^{-1}_{A}
 (a_{11})\otimes\alpha_{A}\omega^{-1}_{A}(a_{12})\otimes \psi_{N}(a_{2}\triangleright n)\\
 &\stackrel{(\mref{eq:1.9})}=&\omega_{A}(a)b_{1}\otimes\beta_{A}(b_{2})\otimes\beta_{N}\psi_{N}(n)
 +\alpha_{A}(a_{1})\otimes a_{2}\psi_{A}(b)\otimes \beta_{N}\psi_{N}(n)\\
 &&+\varepsilon(b)\alpha_{A}
 (a_{1})\otimes\alpha_{A}\omega^{-1}_{A}(a_{21})\otimes (a_{22} \triangleright \psi_{N}(n))\\
 &\stackrel{(\mref{eq:12.31})}=&(\mu\otimes \beta_{A\otimes N})(\omega_{A}(a)\otimes\Delta(b)\otimes\psi_{N}(n))+(\alpha_{A}\otimes\gamma)
 (\Delta(a)\otimes\psi_{A}(b)\otimes \psi_{N}(n))\\
 &=&(\mu\otimes \beta_{A\otimes N})(\omega_{A}\otimes\rho)(a\otimes(b\otimes n))+(\alpha_{A}\otimes\gamma)(\Delta\otimes\psi_{A\otimes N})(a\otimes(b\otimes n)).
 \end{eqnarray*}
 These complete the proof.
 \end{proof}

\end{ex}

 \begin{rmk} If $\l=0$ in Example \mref{ex:12.19} (3) and (4), then we can get Example \mref{ex:12.19} (2).
 \end{rmk}

\subsection{Modules over (anti)quasitriangular $\l$-infBH-bialgebras}\label{se:module} In this subsection, we prove that every module over (anti)quasitriangular $\l$-infBH-bialgebra can induce a  $\l$-infBH-Hopf module.

\subsubsection{First approach}
 First we provide a characterization of $\l$-infBH-bialgebra by an element $r\in A\o A$.

 Let $(A, \mu, 1, \a, \b)$ be a unitary BiHom-algebra, $\psi, \om: A\lr A$ be two linear maps such that Eqs.(\mref{eq:12.1}), (\mref{eq:12.3}) and (\mref{eq:12.30}) hold, $r\in A\o A$ be an  $\a, \b, \psi, \om$-invariant element. We mean
 an element $r\in A\o A$ is {\bf $f$-invariant} if $(f\o f)(r)=r$, where $f: A\lr A$ is a linear map. Define a linear map $\D_r: A\lr A\o A$ by
 \begin{eqnarray}
 &\Delta_{r}(a)=\alpha^{-1}(a)\triangleright r-r\triangleleft \beta^{-1}(a)-\lambda (\omega(a)\otimes 1),~~~\forall~~a\in A,&\mlabel{eq:14.5}
 \end{eqnarray}
 i.e.,
 \begin{eqnarray*}
 \Delta_{r}(a)=\omega\alpha^{-1}(a) r^{1}\otimes\beta(r^{2})-\alpha(r^{1})\otimes r^{2} \psi\beta^{-1}(a)-\lambda (\omega(a)\otimes 1).
 \end{eqnarray*}

 \begin{lem}\mlabel{lem:14.4} The map  $\Delta_{r}$ defined by Eq.(\mref{eq:14.5}) is a $\lambda$-BiHom-derivation.
 \end{lem}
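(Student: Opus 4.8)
The plan is to verify directly that $\Delta_r$ satisfies the two defining conditions of a $\lambda$-BiHom-derivation in Definition \ref{de:12.13}, namely the commutation relations \eqref{eq:12.9} with $\alpha,\beta,\psi,\omega$ and the twisted Leibniz rule \eqref{eq:12.10}. For the first part, one checks that each of the three summands $\omega\alpha^{-1}(a)r^1\otimes\beta(r^2)$, $\alpha(r^1)\otimes r^2\psi\beta^{-1}(a)$ and $\lambda(\omega(a)\otimes 1)$ intertwines the structure maps. This is routine: using that $r$ is $\alpha,\beta,\psi,\omega$-invariant, that any two of these maps commute (Eqs.\eqref{eq:12.1}), that $\alpha,\beta$ are algebra endomorphisms, that $\psi,\omega$ are multiplicative and fix $1$ (Eqs.\eqref{eq:12.3}, \eqref{eq:12.30}), and that $\omega(1)=1$, one slides the maps through each tensor factor. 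For instance $(\psi\otimes\psi)\Delta_r(a)=\psi\omega\alpha^{-1}(a)\psi(r^1)\otimes\psi\beta(r^2)-\cdots=\omega\alpha^{-1}(\psi(a))r^1\otimes\beta(r^2)-\cdots-\lambda(\omega\psi(a)\otimes 1)=\Delta_r(\psi(a))$, and the other three relations are identical in flavour.

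The substance is \eqref{eq:12.10}, i.e. $\Delta_r(ab)=a\triangleright\Delta_r(b)+\Delta_r(a)\triangleleft b+\lambda\alpha\omega(a)\otimes\beta\psi(b)$ with the $A$-actions on $A\otimes A$ from Remark \ref{rmk:14.2}, Eqs.\eqref{eq:14.1}--\eqref{eq:14.2}. I would expand the left side using $\omega\alpha^{-1}(ab)=\omega\alpha^{-1}(a)\,\omega\alpha^{-1}(b)$ (multiplicativity of $\omega\alpha^{-1}$) and $\psi\beta^{-1}(ab)=\psi\beta^{-1}(a)\psi\beta^{-1}(b)$, getting
\begin{eqnarray*}
\Delta_r(ab)&=&\big(\omega\alpha^{-1}(a)\,\omega\alpha^{-1}(b)\big)r^1\otimes\beta(r^2)-\alpha(r^1)\otimes r^2\big(\psi\beta^{-1}(a)\,\psi\beta^{-1}(b)\big)\\
&&-\,\lambda\,\omega(a)\omega(b)\otimes 1.
\end{eqnarray*}
On the right side, $a\triangleright\Delta_r(b)$ contributes (via \eqref{eq:14.1}) a term $\omega(a)\big(\omega\alpha^{-1}(b)r^1\big)\otimes\beta^2(r^2)-\omega(a)\alpha(r^1)\otimes\beta(r^2)\psi\beta^{-1}(b)\cdot\beta(\cdots)$ — more precisely $\omega(a)\big(\alpha(r^1)\big)\otimes\beta\big(r^2\psi\beta^{-1}(b)\big)$ after applying $\beta$ in the second leg — minus $\lambda\,\omega(a)\omega(b)\otimes 1$ (using $\omega(\omega(b))$ versus $\omega\alpha^{-1}$; here one uses $\omega(1)=1$ and that $\Delta_r(b)$'s third term is $-\lambda\omega(b)\otimes 1$ so $a\triangleright$ it gives $-\lambda\,\omega(a)\omega(b)\otimes\beta(1)=-\lambda\omega(a)\omega(b)\otimes 1$). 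Similarly $\Delta_r(a)\triangleleft b$ contributes via \eqref{eq:14.2} a term with $\alpha$ applied to the first leg and $\psi(b)$ multiplied on the right of the second, minus $\lambda\,\alpha\omega(a)\otimes 1\cdot\psi(b)=-\lambda\,\alpha\omega(a)\otimes\psi(b)$ — wait, one must be careful: $1\cdot\psi(b)=\beta\psi(b)$ by the unit axiom \eqref{eq:1.5}, so this term is $-\lambda\,\alpha\omega(a)\otimes\beta\psi(b)$, which cancels exactly against the explicit $+\lambda\alpha\omega(a)\otimes\beta\psi(b)$ on the right side. The two surviving ``$-\lambda\omega(a)\omega(b)\otimes 1$'' pieces (one from $\Delta_r(ab)$'s third term, one from $a\triangleright\Delta_r(b)$) match. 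The remaining four genuine $r$-terms must pair off: the ``$aa'$ on the left of $r^1$'' piece of $\Delta_r(ab)$ matches the corresponding piece of $a\triangleright\Delta_r(b)$ after using BiHom-associativity \eqref{eq:1.3} to rebracket $\omega(a)\big(\omega\alpha^{-1}(b)r^1\big)$, and symmetrically the ``$r^2$ times $bb'$ on the right'' piece of $\Delta_r(ab)$ matches the piece of $\Delta_r(a)\triangleleft b$; the leftover cross-term from $a\triangleright\Delta_r(b)$ (namely $-\omega(a)\alpha(r^1)\otimes\beta(r^2\psi\beta^{-1}(b))$) matches the leftover cross-term from $\Delta_r(a)\triangleleft b$ (namely $\omega\alpha^{-1}(a)r^1\otimes$ something, minus sign) — here I should double-check signs, but the structure is the familiar ``bimodule-derivation'' bookkeeping.

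The main obstacle I anticipate is purely bookkeeping: keeping track of how $\alpha,\beta,\psi,\omega$ and $\alpha^{-1},\beta^{-1}$ redistribute across the tensor legs when the actions \eqref{eq:14.1}--\eqref{eq:14.2} are applied to $\Delta_r(b)$ and $\Delta_r(a)$, and ensuring every rebracketing is licensed by \eqref{eq:1.3} or \eqref{eq:1.5}. There is no conceptual difficulty and no equation to solve — $r$ is completely arbitrary (only invariant), so the identity must hold term-by-term as a formal consequence of the algebra and unit axioms, exactly as in the classical ($\alpha=\beta=\psi=\omega=\mathrm{id}$) case where $\Delta_r(a)=a\otimes 1\cdot r - r\cdot 1\otimes a - \lambda a\otimes 1$ is a $\lambda$-derivation by a one-line computation. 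The only place requiring care is the $\lambda$-term cancellation, where the unit axioms $a1=\alpha(a)$, $1a=\beta(a)$, $\alpha(1)=\beta(1)=\omega(1)=1$ and $\omega\alpha^{-1}(1)=1$ are all invoked; I would do that cancellation first to isolate the weight-dependent part, then finish with the weight-independent part which is just the ordinary BiHom-bimodule-derivation property of $a\mapsto \alpha^{-1}(a)\triangleright r - r\triangleleft\beta^{-1}(a)$.
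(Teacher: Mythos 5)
Your proposal is correct and follows essentially the same route as the paper: a direct verification of Eq.~(\ref{eq:12.10}), in which the $\lambda$-term from $\Delta_r(a)\triangleleft b$ becomes $-\lambda\,\alpha\omega(a)\otimes\beta\psi(b)$ via the unit axiom and cancels the explicit weight term, the $-\lambda\,\omega(a)\omega(b)\otimes 1$ pieces match, the two cross $r$-terms cancel, and the remaining two $r$-terms are identified with $\Delta_r(ab)$ using Eq.~(\ref{eq:1.3}), multiplicativity of $\omega\alpha^{-1}$ and $\psi\beta^{-1}$, and the $\alpha,\beta$-invariance of $r$. The paper likewise only records the check of Eq.~(\ref{eq:12.10}) and treats the intertwining relations Eq.~(\ref{eq:12.9}) as routine, so nothing essential is missing from your plan.
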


 \begin{proof} We only check Eq.(\mref{eq:12.10}) for $\D_r$ as follows. For all  $a,b \in A$,
 \begin{eqnarray*}
 &&\hspace{-25mm}a\triangleright\Delta_{r}(b)+\Delta_{r}(a)\triangleleft b+\lambda\alpha\omega(a)\otimes\beta\psi(b)\\
 &\stackrel{(\mref{eq:14.1})(\mref{eq:14.2})(\mref{eq:1.5})}=&\omega(a)(\omega\alpha^{-1}(b) r^{1})\otimes\beta^{2}(r^{2})
 -\omega(a)\alpha(r^{1})\otimes \beta(r^{2})\psi(b)
 -\lambda\omega(a)\omega(b)\otimes 1\\
 &&+\omega(a)\alpha(r^{1})\otimes\beta(r^{2})\psi(b)
 -\alpha^{2}(r^{1})\otimes (r^{2} \psi\beta^{-1}(a))\psi(b)
 -\lambda\alpha\omega(a)\otimes\beta\psi(b)\\
 &&+\lambda\alpha\omega(a)\otimes\beta\psi(b)\\
 &\stackrel{(\mref{eq:1.3})}=&(\omega\alpha^{-1}(a)\omega\alpha^{-1}(b)) r^{1}\otimes\beta(r^{2})
 -\omega(a)\alpha(r^{1})\otimes \beta(r^{2})\psi(b)
 -\lambda\omega(a)\omega(b)\otimes 1\\
 &&+\omega(a)\alpha(r^{1})\otimes\beta(r^{2})\psi(b)
 -\alpha(r^{1})\otimes r^{2} (\psi\beta^{-1}(a)\psi\beta^{-1}(b))\\
 &\stackrel{(\mref{eq:12.3})(\mref{eq:1.2})}=&\omega\alpha^{-1}(ab) r^{1}\otimes\beta(r^{2})
 -\alpha(r^{1})\otimes r^{2}\psi\beta^{-1}(ab)
 -\lambda\omega(ab)\otimes 1\\
 &=&\Delta_{r}(ab),
 \end{eqnarray*}
 as desired.
 \end{proof}

 For convenience, we follow the notations in \cite{LMMP3} or \cite{MY}. Let $(A,\mu,\alpha,\beta)$ be a unitary BiHom-associative algebra, $\psi,\omega: A\lr A$ be linear maps, $r\in A\otimes A$.  We define the following elements in $A\otimes A\otimes A$:
 \begin{eqnarray*}
 &r_{12}r_{23}=\alpha(r^{1})\otimes r^{2}\bar{r}^{1}\otimes\beta(\bar{r}^{2}),~~ r_{13}r_{12}=\omega(r^{1})\bar{r}^{1}\otimes \beta(\bar{r}^{2})\otimes\alpha\psi(r^{2}),&\\
 &r_{23}r_{13}=\beta\omega(r^{1})\otimes\alpha(\bar{r}^{1})\otimes \bar{r}^{2}\psi(r^{2}),\qquad r_{13}=\omega(r^{1})\otimes 1\otimes\psi(r^{2}),&\\
 &r_{12}=r\o 1,\qquad r_{23}=1\o r.&
 \end{eqnarray*}

 \begin{thm}\mlabel{thm:14.5} Let $(A,\mu,\alpha,\beta)$ be a unitary BiHom-associative algebra such that $\alpha,\beta$ are bijective, $\psi,\omega:A\longrightarrow A$ be linear maps, $r=r^{1}\otimes r^{2}\in A\otimes A$ be $\alpha,\beta,\psi,\omega$-invariant and moreover Eqs.(\mref{eq:12.1}),  (\mref{eq:12.3}) and (\mref{eq:12.30}) hold. Then the $\l$-BiHom-derivation $\D_r$ defined by Eq.(\mref{eq:14.5}) is BiHom-coassociative if and only if
 \begin{eqnarray}
 &\omega\alpha^{-1}(a)\triangleright (r_{13}r_{12}-r_{12}r_{23}+r_{23}r_{13}-\lambda r_{13})=(r_{13}r_{12}-r_{12}r_{23}+r_{23}r_{13}-\lambda r_{13})\triangleleft\psi\beta^{-1}(a).&\label{eq:coboundary}
 \end{eqnarray}
 \end{thm}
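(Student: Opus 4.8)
The plan is to verify BiHom-coassociativity of $\Delta_r$ directly, by expanding both $(\Delta_r\otimes\psi)\circ\Delta_r(a)$ and $(\omega\otimes\Delta_r)\circ\Delta_r(a)$ into sums of terms in $A\otimes A\otimes A$ and comparing. Since $\Delta_r(a) = \omega\alpha^{-1}(a)r^1\otimes\beta(r^2) - \alpha(r^1)\otimes r^2\psi\beta^{-1}(a) - \lambda(\omega(a)\otimes 1)$ is a sum of three pieces, each composite produces nine terms; the strategy is to apply $\Delta_r$ again to the first tensor factor in the first composite (and to the second tensor factor in the second composite), using the $\alpha,\beta,\psi,\omega$-invariance of $r$ and Eqs.~(\ref{eq:1.5}), (\ref{eq:12.1}), (\ref{eq:12.3}), (\ref{eq:12.30}) to push the structure maps around and collect everything in terms of the standard elements $r_{12}r_{23}$, $r_{13}r_{12}$, $r_{23}r_{13}$, $r_{13}$, $r_{12}$, $r_{23}$.

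First I would record the ``mixed'' terms in each composite that involve both $a$ acting from the left and $a$ acting from the right: these are the terms where $\Delta_r$ hits the factor carrying $\omega\alpha^{-1}(a)$ in one composite, resp.\ the factor carrying $r^2\psi\beta^{-1}(a)$ in the other, and they contribute to the right- and left-actions $(\cdot)\triangleleft\psi\beta^{-1}(a)$ and $\omega\alpha^{-1}(a)\triangleright(\cdot)$ in (\ref{eq:coboundary}). Using the bimodule-action identities of Remark~\ref{rmk:14.2} together with BiHom-associativity (\ref{eq:1.3}) and the unit axioms (\ref{eq:1.5}), the purely $r$-dependent parts of the first composite organize into $\omega\alpha^{-1}(a)\triangleright(r_{13}r_{12} - r_{12}r_{23} - \lambda r_{13})$ plus terms that cancel, and similarly the second composite produces $(r_{13}r_{12} - r_{12}r_{23} - \lambda r_{13})\triangleleft\psi\beta^{-1}(a)$ up to cancellation; the asymmetric term $r_{23}r_{13}$ and the remaining $\lambda r_{13}$ come from the cross terms between the ``$r$ on the left of $a$'' and ``$r$ on the right of $a$'' pieces together with the $-\lambda(\omega(\cdot)\otimes 1)$ summand of $\Delta_r$. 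The $\lambda^2$-terms coming from the two copies of $-\lambda(\omega(-)\otimes 1)$ should cancel outright after using $\omega(1)=1$ and coassociativity-type manipulations on $\omega^2(a)\otimes 1\otimes 1$.

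The main obstacle I anticipate is the bookkeeping: correctly tracking which structure map ($\alpha^{\pm1}$, $\beta^{\pm1}$, $\psi$, $\omega$ or compositions) decorates each tensor slot after two applications of $\Delta_r$, and verifying that all the ``contaminating'' terms — those not of the form displayed in (\ref{eq:coboundary}) — actually cancel in pairs between the two composites. The $\alpha,\beta,\psi,\omega$-invariance of $r$ is essential here: it lets one freely absorb or emit these maps on the $r$-legs, which is what makes the two sides line up. I would handle this by first treating the $\lambda=0$ case (recovering the BiHom-analogue of the known associative-Yang–Baxter computation from \cite{LMMP3,MY}), isolating there the three ``good'' terms $r_{13}r_{12}, r_{12}r_{23}, r_{23}r_{13}$, and then separately analyzing the extra $\lambda$-linear and $\lambda$-quadratic contributions; combining the two gives exactly that $(\Delta_r\otimes\psi)\circ\Delta_r = (\omega\otimes\Delta_r)\circ\Delta_r$ if and only if (\ref{eq:coboundary}) holds. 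The compatibility of $\Delta_r$ with $\alpha,\beta,\psi,\omega$ (the first half of being a $\lambda$-BiHom-derivation) is already given by Lemma~\ref{lem:14.4}, so only the coassociativity equivalence needs to be established.
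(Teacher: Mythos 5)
Your proposal is correct and follows essentially the same route as the paper's proof: a direct expansion of $(\Delta_r\otimes\psi)\circ\Delta_r(a)$ and $(\omega\otimes\Delta_r)\circ\Delta_r(a)$ into nine terms each, reorganized via the actions of Remark~\ref{rmk:14.2}, BiHom-associativity, the unit axioms and the $\alpha,\beta,\psi,\omega$-invariance of $r$ into left/right actions of $a$ on $r_{13}r_{12}$, $r_{12}r_{23}$, $r_{23}r_{13}$, $r_{13}$, $r_{12}$, with the remaining ``contaminating'' terms (including the $\lambda^2\,\omega^2(a)\otimes 1\otimes 1$ pieces) cancelling between the two composites. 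The only minor caveat is that, as the paper's computation shows, each composite already contains a mixture of both $\triangleright$- and $\triangleleft$-terms rather than splitting cleanly as one action per composite, but your hedging (``up to cancellation'') covers this and the end result is exactly Eq.~(\ref{eq:coboundary}).
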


 \begin{proof} For all $a\in A$ and $\bar{r}=r$, on one hand,
 \begin{eqnarray*}
 &&(\Delta_{r}\otimes\psi)\circ\Delta_{r}(a)\\
 &&\qquad \stackrel{(\mref{eq:12.30})}=(\omega^{2}\alpha^{-2}(a)\omega\alpha^{-1}(r^{1}))\bar{r}^{1}\otimes\beta(\bar{r}^{2})\otimes \psi\beta(r^{2})-\alpha(\bar{r}^{1})\otimes \bar{r}^{2}(\alpha^{-1}\beta^{-1}\psi\omega(a) \psi\beta^{-1}(r^{1}))\otimes \psi\beta(r^{2})\\
 &&\qquad\qquad-\lambda\omega^{2}\alpha^{-1}(a)\omega(r^{1})\otimes 1 \otimes \psi\beta(r^{2})
 -\omega(r^{1})\bar{r}^{1}\otimes\beta(\bar{r}^{2})\otimes \psi(r^{2})\psi^{2}\beta^{-1}(a)\\
 &&\qquad\qquad+\alpha(\bar{r}^{1})\otimes \bar{r}^{2}\alpha\beta^{-1}\psi(r^{1})\otimes \psi(r^{2}) \psi^{2}\beta^{-1}(a)
 +\lambda\alpha\omega(r^{1})\otimes 1\otimes \psi(r^{2}) \psi^{2}\beta^{-1}(a)\\
 &&\qquad\qquad-\lambda\alpha^{-1}\omega^{2}(a)r^{1}\otimes\beta(r^{2})\otimes 1
 +\lambda\alpha(r^{1})\otimes r^{2}\psi\beta^{-1}\omega(a)\otimes 1
 +\lambda^{2}\omega^{2}(a)\otimes 1 \otimes 1\\
 &&\qquad \stackrel{(\mref{eq:1.3})}=\omega^{2}\alpha^{-1}(a)(\omega\alpha^{-1}(r^{1})\beta^{-1}(\bar{r}^{1}))
 \otimes\beta(\bar{r}^{2})\otimes \psi\beta(r^{2})\\
 &&\qquad\qquad-\alpha(\bar{r}^{1})\otimes \bar{r}^{2}(\alpha^{-1}\beta^{-1}\psi\omega(a) \psi\beta^{-1}(r^{1}))\otimes \psi\beta(r^{2})\\
 &&\qquad\qquad-\lambda\omega^{2}\alpha^{-1}(a)\omega(r^{1})\otimes 1 \otimes \psi\beta(r^{2})
 -\omega(r^{1})\bar{r}^{1}\otimes\beta(\bar{r}^{2})\otimes \psi(r^{2})\psi^{2}\beta^{-1}(a)\\
 &&\qquad\qquad+\alpha(\bar{r}^{1})\otimes \bar{r}^{2}\alpha\beta^{-1}\psi(r^{1})\otimes \psi(r^{2}) \psi^{2}\beta^{-1}(a)
 +\lambda\alpha\omega(r^{1})\otimes 1\otimes \psi(r^{2}) \psi^{2}\beta^{-1}(a)\\
 &&\qquad\qquad-\lambda\alpha^{-1}\omega^{2}(a)r^{1}\otimes\beta(r^{2})\otimes 1
 +\lambda\alpha(r^{1})\otimes r^{2}\psi\beta^{-1}\omega(a)\otimes 1
 +\lambda^{2}\omega^{2}(a)\otimes 1 \otimes 1\\
 &&\quad\stackrel{(\mref{eq:1.5})(\mref{eq:14.3})(\mref{eq:14.4})}=\omega\alpha^{-1}(a)\triangleright
 (\omega\alpha^{-1}(r^{1})\beta^{-1}(\bar{r}^{1})\otimes\bar{r}^{2}\otimes \psi(r^{2}))\\
 &&\qquad\qquad-\alpha(\bar{r}^{1})\otimes \bar{r}^{2}(\alpha^{-1}\beta^{-1}\psi\omega(a) \psi\beta^{-1}(r^{1}))\otimes \psi\beta(r^{2})\\
 &&\qquad\qquad-\lambda\omega\alpha^{-1}(a)\triangleright(\omega(r^{1})\otimes 1 \otimes \psi(r^{2}))
 -(\omega\alpha^{-1}(r^{1})\alpha^{-1}(\bar{r}^{1})\otimes\alpha^{-1}\beta(\bar{r}^{2})\otimes \psi(r^{2}))\triangleleft\psi\beta^{-1}(a)\\
 &&\qquad\qquad+(\bar{r}^{1}\otimes \alpha^{-1}(\bar{r}^{2})\beta^{-1}\psi(r^{1})\otimes \psi(r^{2})) \triangleleft\psi\beta^{-1}(a)
 +\lambda(\omega(r^{1})\otimes 1\otimes \psi(r^{2}))\triangleleft \psi\beta^{-1}(a)\\
 &&\qquad\qquad-\lambda\omega\alpha^{-1}(a)\triangleright(r^{1}\otimes r^{2}\otimes 1)
 +\lambda\alpha(r^{1})\otimes r^{2}\psi\beta^{-1}\omega(a)\otimes 1
 +\lambda^{2}\omega^{2}(a)\otimes 1 \otimes 1\\
 &&\qquad\quad=\omega\alpha^{-1}(a)\triangleright r_{13}r_{12}
 -\alpha(\bar{r}^{1})\otimes \bar{r}^{2}(\alpha^{-1}\beta^{-1}\psi\omega(a) \beta^{-1}(r^{1}))\otimes \beta(r^{2})\\
 &&\qquad\qquad-\lambda\omega\alpha^{-1}(a)\triangleright r_{13}
 -r_{13}r_{12}\triangleleft\psi\beta^{-1}(a)+r_{12}r_{23} \triangleleft\psi\beta^{-1}(a)
 +\lambda r_{13}\triangleleft \psi\beta^{-1}(a)\\
 &&\qquad\qquad-\lambda\omega\alpha^{-1}(a)\triangleright r_{12}
 +\lambda\alpha(r^{1})\otimes r^{2}\psi\beta^{-1}\omega(a)\otimes 1
 +\lambda^{2}\omega^{2}(a)\otimes 1 \otimes 1.
 \end{eqnarray*}
 On the other hand,
 \begin{eqnarray*}
 &&(\omega\otimes\Delta_{r})\circ\Delta_{r}(a)\\
 &&\quad \stackrel{(\mref{eq:12.30})}=\omega^{2}\alpha^{-1}(a) \omega(r^{1})\otimes (\omega\alpha^{-1}\beta(r^{2})\bar{r}^{1}\otimes\beta(\bar{r}^{2})-\alpha(\bar{r}^{1})\otimes \bar{r}^{2}\psi\beta^{-1}\beta(r^{2})-\lambda\omega\beta(r^{2})\otimes 1)\\
 &&\qquad-\omega\alpha(r^{1})\otimes(\omega\alpha^{-1}(r^{2}\psi\beta^{-1}(a))\bar{r}^{1}\otimes\beta(\bar{r}^{2})
 -\alpha(\bar{r}^{1})\otimes \bar{r}^{2}\psi\beta^{-1}(r^{2}\psi\beta^{-1}(a))\\
 &&\qquad-\lambda\omega(r^{2}\psi\beta^{-1}(a))\otimes 1)-\lambda \omega^{2}(a)\otimes (1\cdot r^{1}\otimes \beta(r^{2})-\alpha(r^{1})\otimes r^{2}\cdot 1-\lambda 1\otimes 1)\\
 &&\quad \stackrel{(\mref{eq:1.5})}=\omega^{2}\alpha^{-1}(a)\omega(r^{1})\otimes \omega\alpha^{-1}\beta(r^{2})\bar{r}^{1}\otimes\beta(\bar{r}^{2})
 -\omega^{2}\alpha^{-1}(a)\omega(r^{1})\otimes\alpha(\bar{r}^{1})\otimes \bar{r}^{2}\psi(r^{2})\\
 &&\qquad-\lambda\omega^{2}\alpha^{-1}(a) \omega(r^{1})\otimes\omega\beta(r^{2})\otimes 1
 -\omega\alpha(r^{1})\otimes(\omega\alpha^{-1}(r^{2})
 \alpha^{-1}\beta^{-1}\psi\omega(a))\bar{r}^{1}\otimes\beta(\bar{r}^{2})\\
 &&\qquad+\omega\alpha(r^{1})\otimes\alpha(\bar{r}^{1})\otimes \bar{r}^{2}(\psi\beta^{-1}(r^{2})\psi^{2}\beta^{-2}(a))
 +\lambda\omega\alpha(r^{1})\otimes\omega(r^{2})\omega\psi\beta^{-1}(a)\otimes 1\\
 &&\qquad-\lambda\omega^{2}(a)\otimes r^{1}\otimes r^{2}
 +\lambda \omega^{2}(a)\otimes r^{1}\otimes r^{2}
 +\lambda^{2} \omega^{2}(a)\otimes  1\otimes 1\\
 &&\quad\stackrel{(\mref{eq:1.3})}=\omega^{2}\alpha^{-1}(a)\omega(r^{1})\otimes \omega\alpha^{-1}\beta(r^{2})\bar{r}^{1}\otimes\beta(\bar{r}^{2})
 -\omega^{2}\alpha^{-1}(a)\omega(r^{1})\otimes\alpha(\bar{r}^{1})\otimes \bar{r}^{2}\psi(r^{2})\\
 &&\qquad-\lambda\omega^{2}\alpha^{-1}(a) \omega(r^{1})\otimes\omega\beta(r^{2})\otimes 1
 -\omega\alpha(r^{1})\otimes \omega(r^{2})
 (\alpha^{-1}\beta^{-1}\psi\omega(a)\beta^{-1}(\bar{r}^{1}))\otimes\beta(\bar{r}^{2})\\
 &&\qquad+\omega\alpha(r^{1})\otimes\alpha(\bar{r}^{1})\otimes (\alpha^{-1}(\bar{r}^{2})\psi\beta^{-1}(r^{2}))\psi^{2}\beta^{-1}(a))
 +\lambda\omega\alpha(r^{1})\otimes\omega(r^{2})\omega\psi\beta^{-1}(a)\otimes 1\\
 &&\qquad+\lambda^{2} \omega^{2}(a)\otimes  1\otimes 1\\
 &&~\stackrel{(\mref{eq:14.3})(\mref{eq:14.4})}=\omega\alpha^{-1}(a)\triangleright(\omega(r^{1})\otimes \omega\alpha^{-1}(r^{2})\beta^{-1}(\bar{r}^{1})\otimes \bar{r}^{2})\\
 &&\qquad-\omega\alpha^{-1}(a)\triangleright(\omega(r^{1})\otimes\alpha\beta^{-1}(\bar{r}^{1})\otimes \beta^{-1}(\bar{r}^{2})\psi\beta^{-1}(r^{2}))\\
 &&\qquad-\lambda\omega\alpha^{-1}(a)\triangleright(\omega(r^{1})\otimes\omega(r^{2})\otimes 1)
 -\omega\alpha(r^{1})\otimes \omega(r^{2})
 (\alpha^{-1}\beta^{-1}\psi\omega(a)\beta^{-1}(\bar{r}^{1}))\otimes\beta(\bar{r}^{2})\\
 &&\qquad+(\omega(r^{1})\otimes \bar{r}^{1}\otimes \alpha^{-1}(\bar{r}^{2})\psi\beta^{-1}(r^{2}))\triangleleft\psi\beta^{-1}(a))
 +\lambda\omega\alpha(r^{1})\otimes\omega(r^{2})\omega\psi\beta^{-1}(a)\otimes 1\\
 &&\qquad+\lambda^{2} \omega^{2}(a)\otimes 1\otimes 1\\
 &&\quad =\omega\alpha^{-1}(a)\triangleright r_{12}r_{23}
 -\omega\alpha^{-1}(a)\triangleright r_{23}r_{13}
 -\lambda\omega\alpha^{-1}(a)\triangleright r_{12}\\
 &&\qquad-\alpha(r^{1})\otimes r^{2}(\alpha^{-1}\beta^{-1}\psi\omega(a) \beta^{-1}(\bar{r}^{1}))\otimes \beta(\bar{r}^{2})
 +r_{23}r_{13}\triangleleft\psi\beta^{-1}(a)\\
 &&\qquad+\lambda\alpha(r^{1})\otimes r^{2}\omega\psi\beta^{-1}(a)\otimes 1
 +\lambda^{2} \omega^{2}(a)\otimes 1\otimes 1.
 \end{eqnarray*}
 Therefore $\Delta_{r}$ is BiHom-coassociative if and only if Eq.(\mref{eq:coboundary}) holds, finishing the proof.
 \end{proof}

 Based on the above result, we introduce the notion of nonhomogeneous associative BiHom-Yang-Baxter equation.

 \begin{defi}\label{de:waybe} Let $(A,\mu,1,\alpha,\beta)$ be a unitary BiHom-associative algebra, $\psi,\omega: A\lr A$ be linear maps and $r\in A\otimes A$. We call
 \begin{eqnarray}
 r_{13}r_{12}-r_{12}r_{23}+r_{23}r_{13}=\lambda r_{13} \label{eq:waybe}
 \end{eqnarray}
 the {\bf $\l$-associative BiHom-Yang-Baxter equation (abbr. $\l$-abhYBe) in $(A,\mu,1,\alpha,\beta)$} where $\l$ is a given element in $K$.
 \end{defi}

 By Lemma \mref{lem:14.4} and Theorem \mref{thm:14.5}, we have

 \begin{cor}\mlabel{cor:waybe} Let $(A,\mu,1,\alpha,\beta)$ be a unitary BiHom-associative algebra such that $\alpha,\beta$ are bijective, $\psi,\omega: A\longrightarrow A$ be linear maps, $r=r^{1}\otimes r^{2}\in A\otimes A$ be $\alpha,\beta,\psi,\omega$-invariant and moreover Eqs.(\mref{eq:12.1}),  (\mref{eq:12.3}) and (\mref{eq:12.30}) hold. If $r$ is a solution of the $\l$-abhYBe, then $(A,\mu, \D_r, 1,\alpha,\beta,\psi,\om)$ is a $\l$-infBH-bialgebra, where $\D_r$ is defined by Eq.(\mref{eq:14.5}).
 \end{cor}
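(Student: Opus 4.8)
The plan is to assemble the corollary directly from the two results established just before it, namely Lemma \ref{lem:14.4} and Theorem \ref{thm:14.5}. First I would recall that to verify $(A,\mu,\D_r,1,\alpha,\beta,\psi,\omega)$ is a $\l$-infBH-bialgebra in the sense of Definition \ref{de:12.1}, I must check that $(A,\mu,1,\alpha,\beta)$ is a unitary BiHom-associative algebra (this is a hypothesis), that $(A,\D_r,\psi,\omega)$ is a BiHom-coassociative coalgebra, that the compatibility conditions Eqs.(\ref{eq:12.1})--(\ref{eq:12.3}) hold (Eq.(\ref{eq:12.1}) and (\ref{eq:12.3}) are hypotheses; the two halves of Eq.(\ref{eq:12.2}), i.e.\ $(\alpha\otimes\alpha)\circ\D_r=\D_r\circ\alpha$ and likewise for $\beta$, are part of Eq.(\ref{eq:12.9}) for the $\l$-BiHom-derivation $\D_r$), and finally that the main compatibility Eq.(\ref{eq:12.4}) holds.

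The key observation is that Eq.(\ref{eq:12.4}) is, by Proposition \ref{pro:12.15}, equivalent to $\D_r$ being a $\l$-BiHom-derivation on $(A,\mu,\alpha,\beta)$; and Lemma \ref{lem:14.4} asserts exactly that $\D_r$ defined by Eq.(\ref{eq:14.5}) is a $\l$-BiHom-derivation, which also supplies the intertwining relations Eq.(\ref{eq:12.9}) — in particular Eq.(\ref{eq:12.2}) and the $\psi,\omega$-intertwining half of what is needed. So the only remaining thing to establish is BiHom-coassociativity of $\D_r$, i.e.\ Eq.(\ref{eq:1.9}) for the pair $(\D_r,\psi,\omega)$. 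Here I invoke Theorem \ref{thm:14.5}: under the present hypotheses ($\alpha,\beta$ bijective, $r$ being $\alpha,\beta,\psi,\omega$-invariant, and Eqs.(\ref{eq:12.1}), (\ref{eq:12.3}), (\ref{eq:12.30})), $\D_r$ is BiHom-coassociative if and only if Eq.(\ref{eq:coboundary}) holds. But if $r$ solves the $\l$-abhYBe, Eq.(\ref{eq:waybe}), then $r_{13}r_{12}-r_{12}r_{23}+r_{23}r_{13}-\lambda r_{13}=0$, so both sides of Eq.(\ref{eq:coboundary}) are the image of $0$ under the bimodule actions $\triangleright$ and $\triangleleft$ of Remark \ref{rmk:14.2}, hence trivially equal. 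Therefore $\D_r$ is BiHom-coassociative.

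Putting these together: $(A,\D_r,\psi,\omega)$ is a BiHom-coassociative coalgebra by the previous paragraph, the compatibilities Eqs.(\ref{eq:12.1})--(\ref{eq:12.3}) follow from the hypotheses together with Lemma \ref{lem:14.4}, and Eq.(\ref{eq:12.4}) follows from Lemma \ref{lem:14.4} via Proposition \ref{pro:12.15}. Since $(A,\mu,1,\alpha,\beta)$ is unitary and $\psi(1)=\omega(1)=1$ by Eq.(\ref{eq:12.30}), the conditions in Definition \ref{de:12.1}(2) are met, so $(A,\mu,\D_r,1,\alpha,\beta,\psi,\omega)$ is a unitary $\l$-infBH-bialgebra. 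I do not expect any genuine obstacle here: the entire content of the corollary has been front-loaded into Lemma \ref{lem:14.4} and Theorem \ref{thm:14.5}, and the only point requiring a word of care is the bookkeeping of which of the axioms in Definition \ref{de:12.1} are hypotheses, which come from the $\l$-BiHom-derivation property, and which come from BiHom-coassociativity — making sure nothing is double-counted or omitted.
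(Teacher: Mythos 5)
Your proposal is correct and follows exactly the paper's route: the paper derives the corollary directly from Lemma \mref{lem:14.4} (the $\l$-BiHom-derivation property, hence Eq.(\mref{eq:12.4}) via Proposition \mref{pro:12.15}) together with Theorem \mref{thm:14.5} (coassociativity of $\D_r$ from the $\l$-abhYBe, since both sides of Eq.(\mref{eq:coboundary}) vanish when $r_{13}r_{12}-r_{12}r_{23}+r_{23}r_{13}-\lambda r_{13}=0$). Your additional bookkeeping of which axioms of Definition \mref{de:12.1} are hypotheses versus consequences is accurate and merely makes explicit what the paper leaves implicit.
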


 \begin{defi}\mlabel{de:14.6} Under the assumptions of Corollary \mref{cor:waybe}, a {\bf quasitriangular unitary $\l$-infBH-bialgebra} is a 8-tuple $(A, \mu, 1, \alpha, \beta, \psi, \omega, r)$ consisting of a unitary BiHom-associative algebra $(A, \mu, 1, \alpha, \beta)$ and a solution $r\in A\otimes A$ of a $\l$-abhYBe.
 \end{defi}

 \begin{pro}\mlabel{pro:14.8} Under the assumptions of Corollary \mref{cor:waybe},  $(A, \mu, \D=\D_r, 1, \alpha, \beta, \psi, \omega)$, where $\D_r$ is defined by Eq.(\mref{eq:14.5}), is a quasitriangular unitary $\l$-infBH-bialgebra if and only if
 \begin{eqnarray}
 &(\Delta\otimes\psi)(r)=-r_{23}r_{13}&\mlabel{eq:14.8}
 \end{eqnarray}
 or
 \begin{eqnarray}
 &(\omega\otimes \Delta)(r)=r_{13}r_{12}-\lambda(r_{13}+r_{12}).&\mlabel{eq:14.9}
 \end{eqnarray}
 holds.
 \end{pro}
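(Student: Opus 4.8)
The plan is to apply Theorem~\mref{thm:14.5}, which says that $\D_r$ is BiHom-coassociative precisely when the ``coboundary'' identity Eq.(\mref{eq:coboundary}),
$$
\omega\alpha^{-1}(a)\triangleright (r_{13}r_{12}-r_{12}r_{23}+r_{23}r_{13}-\lambda r_{13})=(r_{13}r_{12}-r_{12}r_{23}+r_{23}r_{13}-\lambda r_{13})\triangleleft\psi\beta^{-1}(a),
$$
holds for all $a\in A$. By Lemma~\mref{lem:14.4} $\D_r$ is already a $\lambda$-BiHom-derivation, so by Proposition~\mref{pro:12.15} the structure $(A,\mu,\D_r,1,\alpha,\beta,\psi,\om)$ is a (quasitriangular unitary) $\l$-infBH-bialgebra if and only if Eq.(\mref{eq:coboundary}) holds, which by Definition~\mref{de:14.6} is what we must match against the $\l$-abhYBe. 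Write $Q:=r_{13}r_{12}-r_{12}r_{23}+r_{23}r_{13}-\lambda r_{13}\in A\o A\o A$; the claim is that $Q$ being central in the bimodule sense (Eq.(\mref{eq:coboundary})) is equivalent to either Eq.(\mref{eq:14.8}) or Eq.(\mref{eq:14.9}). Clearly if $Q=0$, i.e. $r$ solves the $\l$-abhYBe, then Eq.(\mref{eq:coboundary}) holds trivially, so the real content is the converse-type reformulation in terms of the two displayed identities.

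First I would unwind Eq.(\mref{eq:14.8}) and Eq.(\mref{eq:14.9}) into their component forms using the notational conventions for $r_{12}r_{23}$, $r_{13}r_{12}$, $r_{23}r_{13}$, $r_{13}$ listed just before Theorem~\mref{thm:14.5}, together with the $\a,\b,\psi,\om$-invariance of $r$ and Eqs.(\mref{eq:1.5}), (\mref{eq:12.1}), (\mref{eq:12.3}), (\mref{eq:12.30}). The key observation is that $\D_r(a)$ itself has three pieces, $\omega\alpha^{-1}(a)\triangleright r$, $-r\triangleleft\psi\beta^{-1}(a)$ and $-\lambda(\om(a)\o 1)$, so applying $\Delta\otimes\psi$ (resp. $\omega\otimes\Delta$) to $r$ and then using these explicit forms lets one recognize $(\D\o\psi)(r)$ and $(\om\o\D)(r)$ as ``$a$-free'' specializations of the two sides of Eq.(\mref{eq:coboundary}). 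Concretely: evaluating Eq.(\mref{eq:coboundary}) after hitting it with a unit-type reduction should collapse the $a$-dependence and leave exactly $r_{13}r_{12}-r_{12}r_{23}+r_{23}r_{13}-\lambda r_{13}$ paired against $r$ in two slot-patterns, which are precisely $(\D\o\psi)(r)+r_{23}r_{13}$ and $(\om\o\D)(r)-r_{13}r_{12}+\lambda(r_{13}+r_{12})$ up to applying the structure maps. So the plan is: (i) show Eq.(\mref{eq:14.8}) $\Leftrightarrow$ Eq.(\mref{eq:14.9}) directly by comparing components (both encode ``$Q=0$ after evaluation''); (ii) show Eq.(\mref{eq:14.8}) $\Rightarrow$ Eq.(\mref{eq:coboundary}); (iii) show Eq.(\mref{eq:coboundary}) $\Rightarrow$ Eq.(\mref{eq:14.8}) by specializing $a=1$ (using $\om\a^{-1}(1)=1$, $\psi\b^{-1}(1)=1$ from Eqs.(\mref{eq:1.5}),(\mref{eq:12.30})) and reading off the identity in $A^{\o 3}$.

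For step (ii), the mechanism is that $\omega\alpha^{-1}(a)\triangleright(-)$ and $(-)\triangleleft\psi\beta^{-1}(a)$ act on $A\o A\o A$ only in the outer two factors (Eqs.(\mref{eq:14.3}),(\mref{eq:14.4})), while $(\D\o\psi)(r)=-r_{23}r_{13}$ is an identity that I can ``insert'' into the middle: using BiHom-coassociativity is not yet available (that is what we are proving), so instead I would use that $\D_r$ is a $\lambda$-BiHom-derivation to rewrite $(\D_r\o\psi)\D_r$ and $(\om\o\D_r)\D_r$, plug in Eq.(\mref{eq:14.8}) wherever a factor $\D_r(r^{1})$ or $\D_r(r^{2})$-type term appears, and watch the two expressions become equal term-by-term. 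In practice it is cleaner to run step (iii) first — derive Eq.(\mref{eq:14.8}) from Eq.(\mref{eq:coboundary}) by the $a=1$ specialization — then note that the full strength of Theorem~\mref{thm:14.5} already gives Eq.(\mref{eq:coboundary}) $\Leftrightarrow$ BiHom-coassociativity, and for the reverse direction re-run the Theorem~\mref{thm:14.5} computation substituting the now-known value $(\D_r\o\psi)(r)=-r_{23}r_{13}$ to verify Eq.(\mref{eq:coboundary}) holds for all $a$, not just $a=1$.

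\textbf{Main obstacle.} The hard part will be step (iii), i.e. showing that the $a$-indexed family of identities Eq.(\mref{eq:coboundary}) is \emph{equivalent} to — not merely implied by — the single identity Eq.(\mref{eq:14.8}). The $\Leftarrow$ direction is a bookkeeping exercise once one tracks the six component tensors carefully; the $\Rightarrow$ direction requires justifying that setting $a=1$ loses no information, which works here only because $A$ is unitary with $\a(1)=\b(1)=\psi(1)=\om(1)=1$ and $\a,\b$ are bijective, so that $\om\a^{-1}(1)\triangleright Q = Q = Q\triangleleft\psi\b^{-1}(1)$ forces the relevant projection of $Q$ to vanish, and then the general-$a$ case follows because both $\triangleright$ and $\triangleleft$ applied to that vanishing tensor give zero. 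I would be careful that the two alternatives Eq.(\mref{eq:14.8}) and Eq.(\mref{eq:14.9}) genuinely encode the same vanishing (they should, since both come from the two ``halves'' $r_{13}r_{12}$-side and $r_{23}r_{13}$-side of the symmetric expression $Q$), and I would present that equivalence explicitly as the first lemma-step so the ``or'' in the statement is properly justified.
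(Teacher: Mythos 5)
Your strategy does not match the logic of the statement, and its key step fails. By Definition \mref{de:14.6}, ``quasitriangular unitary $\l$-infBH-bialgebra'' \emph{means} that $r$ solves the $\l$-abhYBe, i.e.\ that $Q:=r_{13}r_{12}-r_{12}r_{23}+r_{23}r_{13}-\l r_{13}$ vanishes; so the proposition is exactly the assertion that $Q=0$ is equivalent to Eq.(\mref{eq:14.8}) and to Eq.(\mref{eq:14.9}). The paper proves this by a short direct computation from Eq.(\mref{eq:14.5}), using the listed expressions for $r_{12}r_{23}$, $r_{13}r_{12}$, $r_{23}r_{13}$, $r_{13}$ and the $\a,\b,\psi,\om$-invariance of $r$: one finds $(\D_r\o\psi)(r)=r_{13}r_{12}-r_{12}r_{23}-\l r_{13}$ and $(\om\o\D_r)(r)=r_{12}r_{23}-r_{23}r_{13}-\l r_{12}$, after which each of Eq.(\mref{eq:14.8}) and Eq.(\mref{eq:14.9}) is literally a rearrangement of $Q=0$. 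You never perform this computation; instead you route everything through Theorem \mref{thm:14.5} and claim that Eq.(\mref{eq:coboundary}) is equivalent to Eq.(\mref{eq:14.8}). That claim is false: Eq.(\mref{eq:coboundary}) only says that $Q$ intertwines the two $A$-actions, which is the strictly weaker condition for $\D_r$ to be BiHom-coassociative --- i.e.\ for the tuple to be a $\l$-infBH-bialgebra at all --- not for it to be quasitriangular. (This is precisely the usual gap between ``coboundary'' and ``quasitriangular''.)

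The concrete failure is your step (iii). Setting $a=1$ in Eq.(\mref{eq:coboundary}) and using $\om\a^{-1}(1)=\psi\b^{-1}(1)=1$, $1x=\b(x)$, $x1=\a(x)$ together with Eqs.(\mref{eq:14.3})--(\mref{eq:14.4}) yields only
\begin{equation*}
(\b\o\b\o\b)(Q)=(\a\o\a\o\a)(Q),
\end{equation*}
which does not force $Q=0$ (it is vacuous already when $\a=\b$). No specialization of $a$ can recover $Q=0$, because Eq.(\mref{eq:coboundary}) genuinely admits solutions with $Q\neq 0$. The repair is to drop the detour through the coboundary condition entirely and argue as the paper does: expand $(\D_r\o\psi)(r)$ and $(\om\o\D_r)(r)$ term by term and read off that each displayed identity is equivalent to the $\l$-abhYBe Eq.(\mref{eq:waybe}), which by Definition \mref{de:14.6} is the quasitriangularity condition. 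Your sub-claim (i), that Eq.(\mref{eq:14.8}) and Eq.(\mref{eq:14.9}) are equivalent, is correct, but it too only becomes visible through these same direct computations.
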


 \begin{proof} It is sufficient to prove that Eq.(\mref{eq:waybe}) is equivalent to Eq.(\mref{eq:14.8}) or Eq.(\mref{eq:14.9}). While
 \begin{eqnarray*}
 (\Delta\otimes\psi)(r)
 &\stackrel{(\mref{eq:14.5})}=&\omega\alpha^{-1}(r^{1})\bar{r}^{1}\otimes\beta(\bar{r}^{2})\otimes \psi(r^{2})-\alpha(\bar{r}^{1})\otimes \bar{r}^{2}\psi\beta^{-1}(r^{1})\otimes \psi(r^{2})\\
 &&-\lambda\omega(r^{1})\otimes 1\otimes \psi(r^{2})\\
 &=&r_{13}r_{12}-r_{12}r_{23}-\lambda r_{13},
 \end{eqnarray*}
 and
 \begin{eqnarray*}
 (\omega\otimes\Delta)(r)&\stackrel{(\mref{eq:14.5})}=&\omega(r^{1})\otimes \omega\alpha^{-1}(r^{2})\bar{r}^{1}\otimes \beta(\bar{r}^{2})-\omega(r^{1})\otimes \alpha(\bar{r}^{1})\otimes \bar{r}^{2}\psi\beta^{-1}(r^{2})\\
 &&-\lambda\omega(r^{1})\otimes \omega(r^{2})\otimes 1\\
 &=&r_{12}r_{23}-r_{23}r_{13}-\lambda r_{12},
 \end{eqnarray*}
 as desired.
 \end{proof}

 \begin{rmk} If $\a=\b=\psi=\om=\id$ and $\l=0$ in Proposition \mref{pro:14.8}, then we can obtain \cite[Proposition 5.5]{Ag99}. We notice  here that any one of Eqs.(\mref{eq:14.8}) and (\mref{eq:14.9}) is equivalent to Eq.(\mref{eq:waybe}).
 \end{rmk}

 $\lambda$-infBH-Hopf modules can be obtained from the modules over quasitriangular unitary $\lambda$-infBH-bialgebra by the following procedure.

 \begin{thm}\mlabel{thm:12.02} Let $(A, \mu, 1, \alpha_{A}, \beta_{A}, \psi_{A}, \omega_{A}, r)$ be a quasitriangular unitary $\lambda$-infBH-bialgebra and $(M, \gamma, \alpha_{M}, \beta_{M})$ be a left $(A, \mu, \alpha_{A}, \beta_{A})$-module, $\psi_{M}, \omega_{M}: M\longrightarrow M$ be linear maps such that $\beta_{M}\ci \psi_{M}=\psi_{M}\ci \beta_{M}$, $\psi_{M}\ci \g=\g\ci (\psi_{A}\o \psi_{M})$.
 Then $(M, \gamma, \rho, \alpha_{M}, \beta_{M}, \psi_{M}, \omega_{M})$ becomes a $\lambda$-infBH-Hopf module with the coaction $\rho: M\longrightarrow A\otimes M$ given by
 \begin{eqnarray}\label{eq:modulefromqt}
 &\rho(m):=-\alpha_{A}(r^{1})\otimes r^{2}\triangleright\psi_{M}\beta_{M}^{-1}(m), \forall~ m\in M.&
 \end{eqnarray}
 \end{thm}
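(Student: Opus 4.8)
The plan is to verify the three defining conditions of a (left) $\lambda$-infBH-Hopf module for $(M,\gamma,\rho,\alpha_M,\beta_M,\psi_M,\omega_M)$: that $(M,\rho,\psi_M,\omega_M)$ is a left $(A,\Delta,\psi_A,\omega_A)$-comodule, that the various structure maps commute pairwise, and the compatibility condition Eq.~(\mref{eq:12.13}). The commutativity conditions on $\alpha_M,\beta_M,\psi_M,\omega_M$ with each other follow from the hypotheses together with the definition of $\rho$ and the $\alpha,\beta,\psi,\omega$-invariance of $r$; I would dispose of these first, using that $\gamma$ is a module map (Eqs.~(\mref{eq:1.13})) and that $\psi_M,\omega_M$ commute with $\gamma$ appropriately. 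The definition $\omega_M$ on $M$ should be taken so that $\rho$ is coassociative; since the quasitriangular datum forces $(\omega_A\otimes\Delta)(r)$ and $(\Delta\otimes\psi_A)(r)$ to be the expressions in Proposition~\mref{pro:14.8}, I expect to set things up so that $\omega_M$ plays the role dual to $\psi_M$ on the $M$-leg.

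The heart of the argument is to check that $\rho$ is a coaction, i.e. $(\Delta\otimes\psi_M)\circ\rho=(\omega_A\otimes\rho)\circ\rho$. Here I would expand the left-hand side using $(\Delta\otimes\psi_A)(r)=r_{13}r_{12}-r_{12}r_{23}-\lambda r_{13}$ from Proposition~\mref{pro:14.8} (Eq.~(\mref{eq:14.8}) gives $(\Delta\otimes\psi_A)(r)=-r_{23}r_{13}$, which is the cleaner form to feed in), and expand the right-hand side by applying $\rho$ twice and using the module axiom Eq.~(\mref{eq:1.15}) to collapse the nested actions $r^2\triangleright(\bar r^2\triangleright(\,\cdot\,))$ into $(r^2\bar r^2)\triangleright\beta_M(\cdot)$. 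Matching the two sides should reduce exactly to the identity $(\omega_A\otimes\Delta)(r)=r_{13}r_{12}-\lambda(r_{13}+r_{12})$, i.e.\ Eq.~(\mref{eq:14.9}), which holds because $r$ solves the $\lambda$-abhYBe. This bookkeeping — tracking which of $\alpha,\beta,\psi,\omega$ and $\alpha^{-1},\beta^{-1}$ lands on which tensor leg — is the main obstacle; it is the same style of computation as in Theorem~\mref{thm:14.5}, so the tricky part is purely organizational rather than conceptual.

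Finally, for the compatibility Eq.~(\mref{eq:12.13}) I would compute $\rho\gamma(a\otimes m)=\rho(a\triangleright m)$ directly from the definition of $\rho$ and the definition of $\Delta=\Delta_r$ in Eq.~(\mref{eq:14.5}), namely $\Delta_r(a)=\omega_A\alpha_A^{-1}(a)r^1\otimes\beta_A(r^2)-\alpha_A(r^1)\otimes r^2\psi_A\beta_A^{-1}(a)-\lambda(\omega_A(a)\otimes 1)$, and compare with $(\mu\otimes\beta_M)(\omega_A\otimes\rho)(a\otimes m)+(\alpha_A\otimes\gamma)(\Delta\otimes\psi_M)(a\otimes m)+\lambda\,\alpha_A\omega_A(a)\otimes\beta_M\psi_M(m)$. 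Expanding the second term produces three pieces (from the three terms of $\Delta_r$), and using Eqs.~(\mref{eq:1.3}), (\mref{eq:1.5}), (\mref{eq:1.15}), and the hypotheses $\psi_M\gamma=\gamma(\psi_A\otimes\psi_M)$, the matching of the $r^1$-leg (which carries $\alpha_A$'s) and the $M$-leg should be immediate; the $\lambda r_{13}$ term from $\Delta_r$ and the separate $\lambda$-term in Eq.~(\mref{eq:12.13}) combine with the $\lambda$ coming from $(\mu\otimes\beta_M)(\omega_A\otimes\rho)$ exactly as in the proof of Lemma~\mref{lem:14.4}. Since $\rho$ itself was built from $r$ via the module action, I expect no further constraint: Eq.~(\mref{eq:12.13}) will follow formally from $\lambda$-BiHom-derivation property of $\Delta_r$ (Lemma~\mref{lem:14.4}) transported along $\gamma$, so this last step should be the shortest of the three.
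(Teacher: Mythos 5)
Your proposal matches the paper's proof: the comodule coassociativity is verified by substituting $(\Delta\otimes\psi_A)(r)=-r_{23}r_{13}$ (Eq.(\mref{eq:14.8})) into $(\Delta_r\otimes\psi_M)\rho(m)$ and collapsing the nested actions in $(\omega_A\otimes\rho)\rho(m)$ via Eq.(\mref{eq:1.15}), and the compatibility condition Eq.(\mref{eq:12.13}) is checked by direct expansion of $\Delta_r$ from Eq.(\mref{eq:14.5}) together with Eqs.(\mref{eq:1.2}), (\mref{eq:1.5}), (\mref{eq:1.15}), exactly as you describe. One small correction: once Eq.(\mref{eq:14.8}) is fed in, the coassociativity check closes by itself --- it does not ``reduce to'' Eq.(\mref{eq:14.9}), which concerns $(\omega_A\otimes\Delta)(r)$ and never enters this computation.
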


 \begin{proof} We first prove that $(M, \rho, \psi_{M}, \omega_{M})$ is a left $(A, \D_r, \psi_{A}, \omega_{A})$-comodule as follows. For all $m\in M$, we have
 \begin{eqnarray*}
 (\Delta_r\otimes \psi_{M})\rho(m)
 &\stackrel{(\mref{eq:14.8})}=&\beta_{A}\omega_{A}(\bar{r}^{1})\otimes \alpha_{A}(r^{1})\otimes \alpha_{A}^{-1}(r^{2})\alpha_{A}^{-1}\psi_{A}(\bar{r}^{2})\triangleright\psi_{M}^{2}\beta_{M}^{-1}(m)\\
 &\stackrel{(\mref{eq:1.15})}=&\alpha_{A}\omega_{A}(\bar{r}^{1})\otimes \alpha_{A}(r^{1})\otimes r^{2}\triangleright(\psi_{A}\beta_{A}^{-1}(\bar{r}^{2})\triangleright\psi_{M}^{2}\beta_{M}^{-2}(m))\\
 &=&(\omega_{A}\otimes \rho)\rho(m).
 \end{eqnarray*}
 Next we then check the compatibility condition. For all $a\in A$ and $m\in M$, we have
 \begin{eqnarray*}
 &&\omega_{A}(a)m_{-1}\otimes\beta_{M}(m_{0})+\alpha_{A}(a_{1})\otimes a_{2}\triangleright\psi_{M}(m)+\lambda\alpha_{A}\omega_{A}(a)\otimes\beta_{M}\psi_{M}(m)\\
 &&\qquad\stackrel{(\mref{eq:14.5})}=-\omega_{A}(a)\alpha_{A}(r^{1})\otimes\beta_{A}(r^{2})\triangleright\psi_{M}(m)
 +\alpha_{A}(\omega_{A}\alpha_{A}^{-1}(a)r^{1})\otimes \beta_{A}(r^{2})\triangleright\psi_{M}(m)\\
 &&\qquad\quad\qquad-\alpha_{A}^{2}(r^{1})\otimes r^{2}\psi_{A}\beta_{A}^{-1}(a)\triangleright\psi_{M}(m)
 -\lambda\alpha_{A}\omega_{A}(a)\otimes1\triangleright\psi_{M}(m)\\
 &&\qquad\quad\qquad+\lambda\alpha_{A}\omega_{A}(a)\otimes\beta_{M}\psi_{M}(m)\\
 &&\qquad\stackrel{(\mref{eq:1.2})(\mref{eq:1.5})}=-\alpha_{A}(r^{1})\otimes \alpha_{A}^{-1}(r^{2})\psi_{A}\beta_{A}^{-1}(a)\triangleright\psi_{M}(m)\\
 &&\hspace{2mm}\qquad\stackrel{(\mref{eq:1.15})}=-\alpha_{A}(r^{1})\otimes r^{2}(\psi_{A}\beta_{A}^{-1}(a)\triangleright\psi_{M}\beta_{M}^{-1}(m))\\
 &&\qquad\quad =(a\triangleright m)_{-1}\otimes (a\triangleright m)_{0},
 \end{eqnarray*}
 completing the proof.
 \end{proof}

\subsubsection{Second approach}
 Inspired by Example \mref{ex:12.3}, we can define a new comultiplication for $\l$-infBH-bialgebra by replacing $\l(\om(a)\o 1)$ in Eq.(\mref{eq:14.5}) by $\lambda (1\otimes \psi(a))$, i.e.,
 \begin{eqnarray}
 &\widetilde{\Delta}_{r}(a)=\alpha^{-1}(a)\triangleright r-r\triangleleft \beta^{-1}(a)-\lambda (1\otimes \psi(a)).&\mlabel{eq:14.25}
 \end{eqnarray}
 In this case, we only list the parallel results and omit the partial proofs.

 \begin{pro}\mlabel{pro:14.21} The $\widetilde{\Delta}_{r}$ defined by Eq.(\mref{eq:14.25}) is a $\lambda$-BiHom-derivation.
 \end{pro}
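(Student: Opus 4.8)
The plan is to imitate closely the proof of Lemma \mref{lem:14.4}, since $\widetilde{\Delta}_{r}$ differs from $\Delta_{r}$ only in replacing the term $-\lambda(\omega(a)\otimes 1)$ by $-\lambda(1\otimes\psi(a))$. First I would record that $\widetilde{\Delta}_{r}$ satisfies Eq.(\mref{eq:12.9}), i.e. it commutes appropriately with $\alpha,\beta,\psi,\omega$: the two ``coboundary'' terms $\alpha^{-1}(a)\triangleright r$ and $r\triangleleft\beta^{-1}(a)$ behave exactly as before (using that $r$ is $\alpha,\beta,\psi,\omega$-invariant, that the four maps pairwise commute, and Eqs.(\mref{eq:12.1}), (\mref{eq:12.3})), while the new term $-\lambda(1\otimes\psi(a))$ is handled using $\alpha(1)=\beta(1)=1$, $\psi(1)=\omega(1)=1$ from Eq.(\mref{eq:12.30}) together with $\psi\omega=\omega\psi$ and the commutation relations in Eq.(\mref{eq:12.1}). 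This is entirely routine.

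The substantive part is Eq.(\mref{eq:12.10}), the $\lambda$-BiHom-derivation identity
$$
\widetilde{\Delta}_{r}(ab)=\omega(a)(\widetilde{\Delta}_{r}(b))_{[1]}\otimes\beta((\widetilde{\Delta}_{r}(b))_{[2]})+\alpha((\widetilde{\Delta}_{r}(a))_{[1]})\otimes (\widetilde{\Delta}_{r}(a))_{[2]}\psi(b)+\lambda\,\alpha\omega(a)\otimes\beta\psi(b).
$$
I would split $\widetilde{\Delta}_{r}=\Delta_{r}+\big(\lambda(\omega(\cdot)\otimes 1)-\lambda(1\otimes\psi(\cdot))\big)$ and exploit that the derivation identity is linear in the map being tested, so that it suffices to check it separately for the ``correction'' map $\kappa(a):=\lambda(\omega(a)\otimes 1)-\lambda(1\otimes\psi(a))$, subtracting the extra copy of the weight term. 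Concretely, since Lemma \mref{lem:14.4} already gives the identity for $\Delta_{r}$ with its $+\lambda\alpha\omega(a)\otimes\beta\psi(b)$ term, it remains to verify that $\kappa$ itself satisfies the \emph{weight-zero} BiHom-derivation identity $\kappa(ab)=\omega(a)\triangleright\kappa(b)+\kappa(a)\triangleleft\psi$-$b$ in the sense of the actions (\mref{eq:14.1})–(\mref{eq:14.2}). Unpacking with Eqs.(\mref{eq:1.5}), (\mref{eq:12.3}), (\mref{eq:12.30}): for the $\omega(a)\otimes 1$ piece one gets $\omega(a)\omega(b)\otimes 1=\omega(ab)\otimes 1$ from the left action contribution and the right action contribution vanishes against the unit; for the $1\otimes\psi(a)$ piece, symmetrically, $1\otimes\psi(a)\psi(b)=1\otimes\psi(ab)$, with the complementary term absorbed. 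Summing, the two weight terms coming out of $\Delta_r$'s identity plus $\kappa$'s contributions collapse to exactly $\widetilde{\Delta}_{r}(ab)+\lambda\alpha\omega(a)\otimes\beta\psi(b)$, as needed.

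Alternatively, and perhaps cleaner for the written proof, I would just redo the single computation of Lemma \mref{lem:14.4} verbatim, carrying $-\lambda(1\otimes\psi(b))$, $-\lambda(1\otimes\psi(a))$ in place of $-\lambda(\omega(b)\otimes 1)$, $-\lambda(\omega(a)\otimes 1)$: the $r$-dependent terms cancel in pairs exactly as there, and the remaining scalar terms are $-\lambda(1\otimes\psi(a)\psi(b))$ (from $\omega(a)\triangleright(-\lambda(1\otimes\psi(b)))$, using $1\triangleleft$-side and $\omega(a)\cdot 1$ giving $\alpha$ on the first slot — wait, one must be careful: $\omega(a)\triangleright(1\otimes\psi(b))=\omega\omega(a)1\otimes\beta\psi(b)$, so this needs $1$ in slot one, i.e. the term that survives is on the other side), plus $-\lambda(\alpha\omega(a)\otimes\beta\psi(b))$ from $(-\lambda(1\otimes\psi(a)))\triangleleft b$, plus the explicit $+\lambda\alpha\omega(a)\otimes\beta\psi(b)$, and these combine to $-\lambda(1\otimes\psi(ab))$ by Eq.(\mref{eq:12.3}), which is the new weight term of $\widetilde{\Delta}_{r}(ab)$. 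The main obstacle is purely bookkeeping: making sure each occurrence of the unit $1$ is in the correct tensor slot so that $\alpha,\beta,\omega,\psi$ act as the identity on it via Eqs.(\mref{eq:1.5}) and (\mref{eq:12.30}), and that the left/right actions (\mref{eq:14.1})–(\mref{eq:14.2}) are applied with the right twist on the complementary factor. There is no conceptual difficulty and no need for bijectivity of $\alpha,\beta$ here (that is only needed later for coassociativity in the analogue of Theorem \mref{thm:14.5}).
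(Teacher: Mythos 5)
Your proposal is correct. The paper in fact omits the proof of Proposition \mref{pro:14.21} entirely (``we only list the parallel results and omit the partial proofs''), the intended argument being exactly your second route: rerun the computation of Lemma \mref{lem:14.4} with $-\lambda(1\otimes\psi(\cdot))$ in place of $-\lambda(\omega(\cdot)\otimes 1)$. Your first route is a slightly cleaner packaging of the same content: since the weight-$\lambda$ identity \meqref{eq:12.10} is affine in $\delta$, it suffices to check that the correction $\kappa(a)=\lambda(\omega(a)\otimes 1)-\lambda(1\otimes\psi(a))$ is a weight-zero BiHom-derivation, and indeed $a\triangleright\kappa(b)+\kappa(a)\triangleleft b=\lambda(\omega(ab)\otimes 1)-\lambda(\alpha\omega(a)\otimes\beta\psi(b))+\lambda(\alpha\omega(a)\otimes\beta\psi(b))-\lambda(1\otimes\psi(ab))=\kappa(ab)$, the cross terms cancelling; this isolates the only thing that changes and avoids touching the $r$-dependent terms at all. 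One bookkeeping slip in your second route: the attributions are swapped even after your mid-sentence correction. With the actions \meqref{eq:14.1}--\meqref{eq:14.2} and Eq.~\meqref{eq:1.5}, it is $a\triangleright(-\lambda(1\otimes\psi(b)))=-\lambda(\omega(a)\cdot 1\otimes\beta\psi(b))=-\lambda(\alpha\omega(a)\otimes\beta\psi(b))$ that cancels the explicit $+\lambda\alpha\omega(a)\otimes\beta\psi(b)$, while $(-\lambda(1\otimes\psi(a)))\triangleleft b=-\lambda(\alpha(1)\otimes\psi(a)\psi(b))=-\lambda(1\otimes\psi(ab))$ supplies the new weight term; your final total is nevertheless the right one, so this is cosmetic rather than a gap.
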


 \begin{thm}\mlabel{thm:14.22}Let $(A,\mu,\alpha,\beta)$ be a unitary BiHom-associative algebra such that $\alpha,\beta$ are bijective, $\psi,\omega:A\longrightarrow A$ be linear maps, $r=r^{1}\otimes r^{2}\in A\otimes A$ be $\alpha,\beta,\psi,\omega$-invariant and moreover Eqs.(\mref{eq:12.1}),  (\mref{eq:12.3}) and (\mref{eq:12.30}) hold. Then the $\l$-BiHom-derivation $\widetilde{\D}_r$ defined by Eq.(\mref{eq:14.25}) is BiHom-coassociative if and only if
 \begin{eqnarray}
 &\omega\alpha^{-1}(a)\triangleright (r_{13}r_{12}-r_{12}r_{23}+r_{23}r_{13}+\lambda r_{13})=(r_{13}r_{12}-r_{12}r_{23}+r_{23}r_{13}+\lambda r_{13})\triangleleft\psi\beta^{-1}(a).&\label{eq:coboundary1}
 \end{eqnarray}
 \end{thm}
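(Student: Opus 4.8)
The plan is to imitate, step by step, the proof of Theorem~\mref{thm:14.5}. By Proposition~\mref{pro:14.21} the map $\widetilde{\Delta}_r$ is already a $\lambda$-BiHom-derivation, and Eqs.(\mref{eq:12.1})--(\mref{eq:12.3}), (\mref{eq:12.30}) hold by hypothesis, so the only thing left to verify is the BiHom-coassociativity condition $(\widetilde{\Delta}_r\otimes\psi)\circ\widetilde{\Delta}_r=(\omega\otimes\widetilde{\Delta}_r)\circ\widetilde{\Delta}_r$. The one preliminary computation I would record is $\widetilde{\Delta}_r(1)=-\lambda(1\otimes 1)$, which follows immediately from the unit axioms (\mref{eq:1.5}), (\mref{eq:12.30}) and the $\alpha,\beta$-invariance of $r$; this is the one spot where the computation simplifies slightly compared with Theorem~\mref{thm:14.5}, since there the correction summand $-\lambda(\omega(a)\otimes 1)$ fed a generic element into $\Delta_r$ whereas here the summand $-\lambda(1\otimes\psi(a))$ always feeds in the unit.

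Next I would expand $(\widetilde{\Delta}_r\otimes\psi)\circ\widetilde{\Delta}_r(a)$ by applying $\widetilde{\Delta}_r$ to the first leg of each of the three summands of $\widetilde{\Delta}_r(a)$, and then, using (\mref{eq:1.2}), (\mref{eq:1.3}), (\mref{eq:12.3}), the invariance of $r$ and the triple-tensor actions (\mref{eq:14.3}), (\mref{eq:14.4}), regroup the result exactly as in Theorem~\mref{thm:14.5}: one gets $\omega\alpha^{-1}(a)\triangleright r_{13}r_{12}$, a ``mixed'' term carrying $\alpha^{-1}\beta^{-1}\psi\omega(a)$ in its middle leg, the terms $-r_{13}r_{12}\triangleleft\psi\beta^{-1}(a)+r_{12}r_{23}\triangleleft\psi\beta^{-1}(a)$, together with a handful of $\lambda$-terms and the scalar $\lambda^{2}(1\otimes1\otimes\psi^{2}(a))$. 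I would carry out the parallel expansion of $(\omega\otimes\widetilde{\Delta}_r)\circ\widetilde{\Delta}_r(a)$, where now $\widetilde{\Delta}_r$ acts on the second leg, obtaining $\omega\alpha^{-1}(a)\triangleright(r_{12}r_{23}-r_{23}r_{13})$, the same mixed term, $r_{23}r_{13}\triangleleft\psi\beta^{-1}(a)$, and its own collection of $\lambda$- and $\lambda^{2}$-terms.

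Finally I would subtract the two expressions. The mixed terms and the $\lambda^{2}$-terms cancel identically, exactly as in Theorem~\mref{thm:14.5}, and the remaining ``spurious'' $\lambda$-terms pair off and cancel using the $\psi$-invariance of $r$ (to identify, for instance, $1\otimes\psi\omega\alpha^{-1}(a)\psi(r^{1})\otimes\psi\beta(r^{2})$ with $1\otimes\psi\omega\alpha^{-1}(a)r^{1}\otimes\beta(r^{2})$). What survives of the $\lambda$-corrections is $+\lambda\,\omega\alpha^{-1}(a)\triangleright r_{13}-\lambda\,r_{13}\triangleleft\psi\beta^{-1}(a)$, so that the difference of the two iterated coproducts equals $\omega\alpha^{-1}(a)\triangleright X-X\triangleleft\psi\beta^{-1}(a)$ with $X=r_{13}r_{12}-r_{12}r_{23}+r_{23}r_{13}+\lambda r_{13}$; hence $\widetilde{\Delta}_r$ is BiHom-coassociative if and only if Eq.(\mref{eq:coboundary1}) holds. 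The main obstacle, and really the only substantive difference from Theorem~\mref{thm:14.5}, is the careful tracking of these $\lambda$-correction terms: because the unit now occupies the \emph{left} tensor slot of $-\lambda(1\otimes\psi(a))$ instead of the right slot of $-\lambda(\omega(a)\otimes1)$, it is now the composite $(\omega\otimes\widetilde{\Delta}_r)\circ\widetilde{\Delta}_r$ (rather than $(\widetilde{\Delta}_r\otimes\psi)\circ\widetilde{\Delta}_r$) whose correction term acquires the shape $\omega(r^{1})\otimes1\otimes\psi(r^{2})=r_{13}$; since a term carried by the second composite enters the difference with the opposite sign to one carried by the first composite, the $r_{13}$-contribution flips sign and the $-\lambda r_{13}$ of Eq.(\mref{eq:coboundary}) becomes the $+\lambda r_{13}$ of Eq.(\mref{eq:coboundary1}). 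One should also double-check that $\widetilde{\Delta}_r(1)=-\lambda(1\otimes1)$ hit by a final $\psi$ produces only the benign $\lambda^{2}(1\otimes1\otimes\psi^{2}(a))$ and no extra $r$-dependent $\lambda$-terms, which is what shortens the bookkeeping on the $(\widetilde{\Delta}_r\otimes\psi)$-side.
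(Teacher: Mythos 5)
Your proposal is correct and is essentially the proof the paper intends: the paper explicitly omits the argument for Theorem \ref{thm:14.22} as ``parallel'' to Theorem \ref{thm:14.5}, and your outline carries out exactly that parallel computation, correctly locating the only substantive change — the $\lambda$-correction $-\lambda(1\otimes\psi(a))$ now produces the $r_{13}$-shaped terms inside $(\omega\otimes\widetilde{\Delta}_r)\circ\widetilde{\Delta}_r$ rather than $(\widetilde{\Delta}_r\otimes\psi)\circ\widetilde{\Delta}_r$, which flips the sign of $\lambda r_{13}$ in Eq.(\ref{eq:coboundary1}). Your subsidiary observations ($\widetilde{\Delta}_r(1)=-\lambda(1\otimes1)$, the cancellation of the remaining $\lambda$- and $\lambda^2$-terms via $\psi$-invariance of $r$) all check out.
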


 \begin{rmk}
 If we substitute $\l$ in Eq.(\mref{eq:coboundary}) by  $-\l$, then we  obtain Eq.(\mref{eq:coboundary1}).
 \end{rmk}

 \begin{defi}\mlabel{de:14.6a} Under the assumptions of Corollary \mref{cor:waybe}, an {\bf anti-quasitriangular unitary $\l$-infBH-bialgebra} is a 8-tuple $(A, \mu, 1, \alpha, \beta, \psi, \omega, r)$ consisting of a unitary BiHom-associative algebra $(A, \mu, 1, \alpha, \beta)$ and a solution $r\in A\otimes A$ of a $(-\l)$-abhYBe.
 \end{defi}

 \begin{pro}\mlabel{pro:14.25} Under the assumption of Corollary \mref{cor:waybe},  $(A, \mu, \D=\widetilde{\D}_r, 1, \alpha, \beta, \psi, \omega)$, where $\widetilde{\D}_r$ is defined by Eq.(\mref{eq:14.25}), is an anti-quasitriangular unitary $\l$-infBH-bialgebra if and only if
 \begin{eqnarray}
 &(\Delta\otimes\psi)(r)=-r_{23}r_{13}-\lambda(r_{23}+r_{13})&\mlabel{eq:14.28}
 \end{eqnarray}
 or
 \begin{eqnarray}
 &(\omega\otimes \Delta)(r)=r_{13}r_{12}.&\mlabel{eq:14.29}
 \end{eqnarray}
 \end{pro}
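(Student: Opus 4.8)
The plan is to reduce Proposition \mref{pro:14.25} to Theorem \mref{thm:14.22} in exactly the way Proposition \mref{pro:14.8} was reduced to Theorem \mref{thm:14.5}. By Definition \mref{de:14.6a}, the 8-tuple $(A, \mu, 1, \alpha, \beta, \psi, \omega, r)$ is an anti-quasitriangular unitary $\l$-infBH-bialgebra precisely when $r$ solves the $(-\l)$-abhYBe, i.e. $r_{13}r_{12}-r_{12}r_{23}+r_{23}r_{13}=-\lambda r_{13}$, which is the same as saying the left-hand side of Eq.(\mref{eq:coboundary1}) reads $\omega\alpha^{-1}(a)\triangleright 0$. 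So the content of the proposition is that $r$ solving the $(-\l)$-abhYBe is equivalent to Eq.(\mref{eq:14.28}) and also equivalent to Eq.(\mref{eq:14.29}). First I would compute $(\Delta\otimes\psi)(r)$ and $(\omega\otimes\Delta)(r)$ directly from the formula $\widetilde{\Delta}_r$ in Eq.(\mref{eq:14.25}), in complete analogy with the two displays in the proof of Proposition \mref{pro:14.8}, but now with the term $-\lambda(1\otimes\psi(\cdot))$ replacing $-\lambda(\omega(\cdot)\otimes 1)$.

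Concretely, applying $\widetilde{\Delta}_r$ in the first slot of $r=r^1\otimes r^2$ and using the $\alpha,\beta,\psi,\omega$-invariance of $r$ together with Eq.(\mref{eq:1.5}), one gets
\begin{eqnarray*}
(\Delta\otimes\psi)(r)&=&\omega\alpha^{-1}(r^{1})\bar{r}^{1}\otimes\beta(\bar{r}^{2})\otimes\psi(r^{2})-\alpha(\bar{r}^{1})\otimes\bar{r}^{2}\psi\beta^{-1}(r^{1})\otimes\psi(r^{2})-\lambda\,1\otimes\omega(r^{1})\otimes\psi(r^{2})\\
&=&r_{13}r_{12}-r_{12}r_{23}-\lambda r_{23},
\end{eqnarray*}
after identifying the last term: since $r$ is $\omega$- and $\psi$-invariant, $1\otimes\omega(r^1)\otimes\psi(r^2)=1\otimes r^1\otimes r^2=r_{23}$. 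Hence Eq.(\mref{eq:14.28}), namely $(\Delta\otimes\psi)(r)=-r_{23}r_{13}-\lambda(r_{23}+r_{13})$, is equivalent to $r_{13}r_{12}-r_{12}r_{23}-\lambda r_{23}=-r_{23}r_{13}-\lambda r_{23}-\lambda r_{13}$, which simplifies (cancelling $-\lambda r_{23}$) to $r_{13}r_{12}-r_{12}r_{23}+r_{23}r_{13}=-\lambda r_{13}$, the $(-\l)$-abhYBe. Similarly, applying $\widetilde{\Delta}_r$ in the second slot,
\begin{eqnarray*}
(\omega\otimes\Delta)(r)&=&\omega(r^{1})\otimes\omega\alpha^{-1}(r^{2})\bar{r}^{1}\otimes\beta(\bar{r}^{2})-\omega(r^{1})\otimes\alpha(\bar{r}^{1})\otimes\bar{r}^{2}\psi\beta^{-1}(r^{2})-\lambda\,\omega(r^{1})\otimes 1\otimes\psi(r^{2})\\
&=&r_{12}r_{23}-r_{23}r_{13}-\lambda r_{13},
\end{eqnarray*}
so Eq.(\mref{eq:14.29}), $(\omega\otimes\Delta)(r)=r_{13}r_{12}$, is equivalent to $r_{12}r_{23}-r_{23}r_{13}-\lambda r_{13}=r_{13}r_{12}$, i.e. again $r_{13}r_{12}-r_{12}r_{23}+r_{23}r_{13}=-\lambda r_{13}$.

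Thus both Eq.(\mref{eq:14.28}) and Eq.(\mref{eq:14.29}) are each equivalent to the $(-\l)$-abhYBe, which by Definition \mref{de:14.6a} and Corollary \mref{cor:waybe} (applied with weight $-\lambda$, or rather with the roles of the two candidate compatibility conditions traced through Theorem \mref{thm:14.22}) is exactly the statement that $(A,\mu,\widetilde{\D}_r,1,\alpha,\beta,\psi,\omega)$ is an anti-quasitriangular unitary $\l$-infBH-bialgebra. The main obstacle, such as it is, is purely bookkeeping: one has to be careful that the surviving $-\lambda$ term in each of the two computations of $(\Delta\otimes\psi)(r)$ and $(\omega\otimes\Delta)(r)$ lands in the slot $r_{23}$ respectively $r_{13}$ (not $r_{12}$), since $\widetilde{\Delta}_r$ puts the weight term on the \emph{second} tensor factor as $1\otimes\psi(a)$; this is precisely the asymmetry that makes the anti-quasitriangular equations differ from the quasitriangular ones in Proposition \mref{pro:14.8}, and it is what produces the extra term $-\lambda(r_{23}+r_{13})$ on the right of Eq.(\mref{eq:14.28}) but no extra term in Eq.(\mref{eq:14.29}). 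Everything else follows verbatim from the manipulations already carried out for the quasitriangular case, using Eqs.(\mref{eq:1.3}), (\mref{eq:1.5}) and the invariance of $r$.
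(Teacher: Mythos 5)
Your proposal is correct and follows essentially the same route as the paper: compute $(\Delta\otimes\psi)(r)=r_{13}r_{12}-r_{12}r_{23}-\lambda r_{23}$ and $(\omega\otimes\Delta)(r)=r_{12}r_{23}-r_{23}r_{13}-\lambda r_{13}$ directly from Eq.~\eqref{eq:14.25} and observe that each of Eqs.~\eqref{eq:14.28} and \eqref{eq:14.29} then reduces to the $(-\lambda)$-abhYBe, which is exactly the defining condition in Definition~\ref{de:14.6a}. One small slip: the weight term in your first display should be $-\lambda\, 1\otimes\psi(r^{1})\otimes\psi(r^{2})$ rather than $-\lambda\, 1\otimes\omega(r^{1})\otimes\psi(r^{2})$ (and it equals $-\lambda r_{23}$ by $\psi$-invariance alone, not by mixing $\omega$- and $\psi$-invariance, which would not justify rescaling a mixed term $\omega(r^1)\otimes\psi(r^2)$); the conclusion is unaffected.
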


 \begin{proof} The result can be proved by the following equalities:
 \begin{eqnarray*}
 (\Delta\otimes\psi)(r)&\stackrel{(\mref{eq:14.25})}=&\omega\alpha^{-1}(r^{1})\bar{r}^{1}\otimes\beta(\bar{r}^{2})\otimes \psi(r^{2})-\alpha(\bar{r}^{1})\otimes \bar{r}^{2}\psi\beta^{-1}(r^{1})\otimes \psi(r^{2})\\
 &&-\lambda(1\otimes \psi(r^{1}))\otimes \psi(r^{2})\\
 &=&r_{13}r_{12}-r_{12}r_{23}-\lambda r_{23},
 \end{eqnarray*}
 and
 \begin{eqnarray*}
 (\omega\otimes\Delta)(r)
 &\stackrel{(\mref{eq:14.25})}=&\omega(r^{1})\otimes \omega\alpha^{-1}(r^{2})\bar{r}^{1}\otimes \beta(\bar{r}^{2})-\omega(r^{1})\otimes \alpha(\bar{r}^{1})\otimes \bar{r}^{2}\psi\beta^{-1}(r^{2})\\
 &&-\lambda\omega(r^{1})\otimes 1\otimes \psi(r^{2})\\
 &=&r_{12}r_{23}-r_{23}r_{13}-\lambda r_{13},
 \end{eqnarray*}
 as desired.
 \end{proof}

 \begin{rmk}\label{rmk:qcq} (1) If $\l=0$, then the conditions in Proposition \mref{pro:14.8} and the ones in Proposition \mref{pro:14.25} are consistent, and in this case Proposition \mref{pro:14.8} and \mref{pro:14.25} are the BiHom-version of \cite[Proposition 5.5]{Ag99}.

 (2) If $\l\neq 0$, then the conditions in Proposition \mref{pro:14.8} and the ones in Proposition \mref{pro:14.25} are different which shows that there are essential differences between quasitriangular unitary $\l$-infBH-bialgebras and the corresponding anti-case.

 (3) When $\l=-1$ in Proposition \mref{pro:14.25}, we obtain \cite[Corollary 2.32]{MLi}. If further, the structure maps $\a=\b=\psi=\om=\id$, Proposition \mref{pro:14.25} is consistent with \cite[Lemma 3.19]{Br}.
 \end{rmk}

 $\lambda$-infBH-Hopf modules also can be constructed from the modules over anti-quasitriangular unitary $\lambda$-infBH-bialgebras, which is different from Theorem \mref{thm:12.02} for the quasitriangular case.

 \begin{thm}\mlabel{thm:12.02a} Let $(A, \mu, 1, \alpha_{A}, \beta_{A}, \psi_{A}, \omega_{A}, r)$ be an anti-quasitriangular unitary $\lambda$-infBH-bialgebra and $(M, \widetilde{\gamma}, \alpha_{M}, \beta_{M})$ a left $(A, \mu, \alpha_{A}, \beta_{A})$-module, $\psi_{M}, \omega_{M}: M\longrightarrow M$ be linear maps  such that $\beta_{M}\ci \psi_{M}=\psi_{M}\ci \beta_{M}$, $\psi_{M}\ci \widetilde{\gamma}=\widetilde{\gamma}\ci (\psi_{A}\o \psi_{M})$. Then $(M, \widetilde{\gamma}, \widetilde{\rho}, \alpha_{M}, \beta_{M}, \psi_{M}, \omega_{M})$ becomes a left $\lambda$-infBH-Hopf module with the coaction $\widetilde{\rho}: M\longrightarrow A\otimes M$ given by
 \begin{eqnarray}\label{eq:modulefromantiqt}
 &\widetilde{\rho}(m):=-\alpha_{A}(r^{1})\otimes r^{2}\triangleright\psi_{M}\beta_{M}^{-1}(m)-\lambda 1\otimes \psi_{M}(m), \forall~ m\in M.&
 \end{eqnarray}
 \end{thm}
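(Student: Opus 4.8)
The strategy is to mirror the proof of Theorem \mref{thm:12.02}, but now using the characterization of anti-quasitriangular bialgebras from Proposition \mref{pro:14.25} (Eqs.(\mref{eq:14.28}) and (\mref{eq:14.29})) in place of Eqs.(\mref{eq:14.8}) and (\mref{eq:14.9}), and keeping careful track of the extra $\lambda(1\otimes\psi_M(m))$ term appearing in $\widetilde\rho$. First I would record the easy compatibilities: since $\alpha_M,\beta_M,\psi_M,\omega_M$ pairwise commute and $r$ is $\alpha,\beta,\psi,\omega$-invariant, and using $\psi_M\ci\widetilde\gamma=\widetilde\gamma\ci(\psi_A\o\psi_M)$ together with the hypotheses Eqs.(\mref{eq:1.13}) on the module structure, one checks directly that $\widetilde\rho$ intertwines $\psi_M$ with $\psi_A\o\psi_M$ and $\omega_M$ with $\omega_A\o\omega_M$; the first term of $\widetilde\rho$ is handled exactly as in Theorem \mref{thm:12.02} and the $-\lambda 1\otimes\psi_M(m)$ term is immediate from $\psi_A(1)=1=\omega_A(1)$ (Eq.(\mref{eq:12.30})).

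Next I would verify the left comodule axiom $(\widetilde\Delta_r\otimes\psi_M)\widetilde\rho=(\omega_A\otimes\widetilde\rho)\widetilde\rho$. Applying $\widetilde\Delta_r\otimes\psi_M$ to $\widetilde\rho(m)$, the first summand $-\alpha_A(r^1)\otimes r^2\tl\psi_M\beta_M^{-1}(m)$ produces $(\widetilde\Delta_r\otimes\psi_M)$ evaluated on $-\alpha_A(r^1)\otimes(\cdots)$, and here one substitutes $(\widetilde\Delta_r\otimes\psi_M)$ in the form dictated by Eq.(\mref{eq:14.28}), namely $(\Delta\otimes\psi)(r)=-r_{23}r_{13}-\lambda(r_{23}+r_{13})$; the $-\lambda 1\otimes\psi_M(m)$ summand contributes $\widetilde\Delta_r(1)\otimes\psi_M^2(m)$, and since $\widetilde\Delta_r(1)=1\tl r - r\tr 1 - \lambda(1\otimes 1)$ one uses unitality Eqs.(\mref{eq:1.5}) and the $\alpha,\beta$-invariance of $r$ to simplify it. Then using BiHom-associativity Eq.(\mref{eq:1.15}) of the module action to reassociate, together with the $\omega_A$-invariance of $r$, all the $r_{23}r_{13}$, $r_{23}$, $r_{13}$ pieces should reorganize precisely into $(\omega_A\otimes\widetilde\rho)\widetilde\rho(m)=\omega_A(\bar r^1)\otimes\big(-\alpha_A(r^1)\otimes r^2\tl\psi_M\beta_M^{-1}(\bar r^2\tl\psi_M\beta_M^{-1}(m))\big)-\lambda\omega_A(\bar r^1)\otimes 1\otimes\psi_M(\bar r^2\tl\psi_M\beta_M^{-1}(m))$, after one more application of Eq.(\mref{eq:14.28}) inside. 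Keeping the $\lambda$-bookkeeping straight here is the delicate part, because the ``$-\lambda$'' twist of the abhYBe means the sign conventions differ from the quasitriangular computation.

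Finally I would check the $\lambda$-infBH-Hopf module compatibility Eq.(\mref{eq:12.13}), i.e. that
$\widetilde\rho(a\tl m)=\omega_A(a)(a\tl m)_{-1}$-type expansion equals
$\omega_A(a)\,m_{-1}\otimes\beta_M(m_0)+\alpha_A(a_1)\otimes a_2\tl\psi_M(m)+\lambda\alpha_A\omega_A(a)\otimes\beta_M\psi_M(m)$,
where $\Delta=\widetilde\Delta_r$ and $m_{-1}\otimes m_0=\widetilde\rho(m)$. Expanding the right-hand side: $\Delta(a)=\widetilde\Delta_r(a)$ via Eq.(\mref{eq:14.25}) splits $\alpha_A(a_1)\otimes a_2\tl\psi_M(m)$ into three pieces (the $\omega\alpha^{-1}(a)\tl r$ piece, the $-r\tr\beta^{-1}(a)$ piece, and the $-\lambda(1\otimes\psi_A(a))$ piece); the $\widetilde\rho$ in $m_{-1}\otimes m_0$ itself splits into its two terms. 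After substituting, most terms cancel in pairs just as in Theorem \mref{thm:12.02}: the ``$\omega\alpha^{-1}(a)\tl r$'' contribution cancels against $\omega_A(a)m_{-1}$, the $-\lambda(1\otimes\psi_A(a))$ contribution cancels one of the $\lambda$-terms coming from the new $\lambda$ term in $\widetilde\rho$, and the remaining $-\lambda\alpha_A\omega_A(a)\otimes\beta_M\psi_M(m)$ is killed by the explicit $+\lambda\alpha_A\omega_A(a)\otimes\beta_M\psi_M(m)$ on the right; what survives is exactly $-\alpha_A(r^1)\otimes r^2\tl(\psi_A\beta_A^{-1}(a)\tl\psi_M\beta_M^{-1}(m))-\lambda 1\otimes\psi_M(a\tl m)=\widetilde\rho(a\tl m)$, using Eqs.(\mref{eq:1.2}), (\mref{eq:1.5}), (\mref{eq:1.15}) and the intertwining hypothesis $\psi_M\ci\widetilde\gamma=\widetilde\gamma\ci(\psi_A\o\psi_M)$. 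The main obstacle, as in the parallel theorem, is organizing the many BiHom-twisted terms and sign/$\lambda$ factors so that the cancellations are transparent; conceptually there is nothing beyond Propositions \mref{pro:14.21} and \mref{pro:14.25}, Lemma \mref{lem:12.4}, and the module axioms, but the computation is longer than the quasitriangular one because of the extra $\lambda$-terms in both $\widetilde\Delta_r$ and $\widetilde\rho$.
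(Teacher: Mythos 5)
Your plan is correct and follows essentially the same route as the paper's proof: the comodule axiom is verified by substituting Eq.(\mref{eq:14.28}) into $(\widetilde{\Delta}_r\otimes\psi_M)\widetilde{\rho}$ (with the extra summand reducing to $\lambda^2\,1\otimes 1\otimes\psi_M^2(m)$ via $\widetilde{\Delta}_r(1)=-\lambda(1\otimes 1)$) and reassociating with Eq.(\mref{eq:1.15}), while the Hopf-module compatibility follows by expanding $\widetilde{\Delta}_r$ via Eq.(\mref{eq:14.25}) and cancelling exactly the pairs you describe. The only nit is a slight misattribution of which $\lambda$-terms cancel against which in the last step (the $-\lambda\,1\otimes\psi_A(a)\triangleright\psi_M(m)$ piece does not cancel but survives as $-\lambda\,1\otimes\psi_M(a\triangleright m)$ inside $\widetilde{\rho}(a\triangleright m)$), which your final accounting nonetheless gets right.
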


 \begin{proof} We first prove that $(M, \widetilde{\rho}, \psi_{M}, \omega_{M})$ is a left $(A, \widetilde{\D_r}, \psi_{A}, \omega_{A})$-comodule as follows. For all $m\in M$, we have
 \begin{eqnarray*}
 (\widetilde{\Delta_r}\otimes \psi_{M})\widetilde{\rho}(m)
 \hspace{-3mm}&\stackrel{(\mref{eq:14.28})}=&\hspace{-3mm}
 \lambda\omega_{A}(r^{1})\otimes1\otimes\alpha_{A}^{-1}\psi_{A}(r^{2})
 \triangleright\psi_{M}^{2}\beta_{M}^{-1}(m)+\lambda 1\otimes r^{1} \otimes \alpha_{A}^{-1}(r^{2})\triangleright\psi_{M}^{2}\beta_{M}^{-1}(m)\\
 &&\hspace{-3mm}+\beta_{A}\omega_{A}(r^{1})\otimes \alpha_{A}(\bar{r}^{1})\otimes \alpha_{A}^{-1}(\bar{r}^{2})\alpha_{A}^{-1}\psi_{A}(r^{2})\triangleright\psi_{M}^{2}\beta_{M}^{-1}(m) +\lambda^{2} 1\otimes 1\otimes \psi_{M}^{2}(m)\\
 \hspace{-3mm}&\stackrel{(\mref{eq:1.15})}=&\hspace{-3mm}
 \omega_{A}\alpha_{A}(r^{1})\otimes \alpha_{A}(\bar{r}^{1})\otimes \bar{r}^{2}\triangleright(\psi_{A}\beta_{A}^{-1}(r^{2})
 \triangleright\psi_{M}^{2}\beta_{M}^{-2}(m))+\lambda^{2}1\otimes 1\otimes \psi^{2}_{M}(m)\\
 &&\hspace{-3mm}+\lambda\omega_{A}\alpha_{A}(r^{1})\otimes 1\otimes\psi_{A}(r^{2})\triangleright\psi^{2}_{M}\beta^{-1}_{M}(m)
 +\lambda 1\otimes\alpha_{A}(r^{1})\otimes r^{2}\triangleright\psi^{2}_{M}\beta^{-1}_{M}(m)\\
 \hspace{-3mm}&\stackrel{(\mref{eq:12.30})}=&\hspace{-3mm}(\omega_{A}\otimes \widetilde{\rho})\widetilde{\rho}(m).
 \end{eqnarray*}
 The compatibility condition can be checked as follows. For all $a\in A$ and $m\in M$,
 \begin{eqnarray*}
 &&\hspace{-15mm}\omega_{A}(a)m_{-1}\otimes\beta_{M}(m_{0})+\alpha_{A}(a_{1})\otimes a_{2}\triangleright\psi_{M}(m)+\lambda\alpha_{A}\omega_{A}(a)\otimes\beta_{M}\psi_{M}(m)\\
 \hspace{-5mm}&\stackrel{(\mref{eq:14.25})}=&\hspace{-3mm}
 -\omega_{A}(a)\alpha_{A}(r^{1})\otimes\beta_{A}(r^{2})\triangleright\psi_{M}(m)
 +\lambda\omega_{A}(a)\cdot 1\otimes\beta_{M}\psi_{M}(m)+\omega_{A}(a)\alpha_{A}(r^{1})\otimes \beta_{A}(r^{2})\triangleright\psi_{M}(m)\\
 \hspace{-5mm}&&\hspace{-3mm}-\alpha_{A}^{2}(r^{1})\otimes r^{2}\psi_{A}\beta_{A}^{-1}(a)\triangleright\psi_{M}(m)
 -\lambda1\otimes\psi_{A}(a)\triangleright\psi_{M}(m)
 +\lambda\alpha_{A}\omega_{A}(a)\otimes\beta_{M}\psi_{M}(m)\\
 \hspace{-5mm}&\stackrel{(\mref{eq:1.5})}=&\hspace{-3mm}-\alpha_{A}(r^{1})\otimes \alpha^{-1}_{A}(r^{2})\psi_{A}\beta_{A}^{-1}(a)\triangleright\psi_{M}(m)
 -\lambda1\otimes\psi_{A}(a)\triangleright\psi_{M}(m)\\
 \hspace{-5mm}&\stackrel{(\mref{eq:1.13})}=&\hspace{-3mm}-\alpha_{A}(r^{1})\otimes r^{2}\triangleright\psi_{M}\beta^{-1}_{M}(a \triangleright m)
 -\lambda1\otimes\psi_{M}(a \triangleright m)=(a\triangleright m)_{-1}\otimes (a\triangleright m)_{0},
 \end{eqnarray*}
 as desired.
 \end{proof}

\subsection{BiHom-pre-Lie algebras from $\l$-infBH-bialgebras}\label{se:subprelie}
 In this subsection, we provide two approaches to construct BiHom-pre-Lie algebras from $\l$-infBH-bialgebras.

 Rota-Baxter operators on a BiHom-associative algebras (here we call this structure Rota-Baxter BiHom-associative algebra) were studied in \cite[Definition 2.1]{LMMP1} or \cite[Definition 2.1]{MYZZ}.

 \begin{lem}\mlabel{lem:014.10}(\cite[Corollary 2.6]{MLi}) Let $(A, \mu, 1, \alpha, \beta)$ be a unitary BiHom-associative algebra, $\psi,\omega: A\lr A$ be linear maps such that $\alpha,\beta,\psi,\omega$ are bijective and Eqs.(\mref{eq:12.1}), (\mref{eq:12.3}) hold. Assume that $r$ is a solution of  $(\pm\l)$-abhYBe in $(A,\mu,\alpha,\beta)$ and $r$ is $\alpha,\beta,\psi,\omega$-invariant. Define $R: A\lr A$ by
 \begin{eqnarray*}
 &R(a)=\mp\beta^{2}\psi(r^{1})(\alpha^{-1}\beta^{-1}(a)\alpha\omega(r^{2})), \forall a\in A.&
 \end{eqnarray*}
 Then $(A, \mu, R, \alpha, \beta)$ is a Rota-Baxter BiHom-associative algebra of weight $\l$.
 \end{lem}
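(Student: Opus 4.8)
Write $R_0(a):=\b^{2}\psi(r^{1})\bigl(\a^{-1}\b^{-1}(a)\,\a\omega(r^{2})\bigr)$, so that $R=\mp R_0$. The plan is to show that $R_0$ is a Rota--Baxter operator of weight $\mp\l$; since rescaling a Rota--Baxter operator of weight $w$ by a scalar $c$ produces one of weight $cw$, and $(\mp1)(\mp\l)=\l$, this will give that $R=\mp R_0$ has weight $\l$, as asserted. That $R_0$ commutes with $\a$ and with $\b$ is routine: $\a,\b$ are algebra endomorphisms commuting with one another and with $\psi,\omega$ by \meqref{eq:12.1}, and $r$ is $\a,\b,\psi,\omega$-invariant, so applying $\a$ (resp. $\b$) to $R_0(a)$, distributing it across the product, and absorbing the superfluous copies onto the legs $r^{1},r^{2}$ via invariance reproduces $R_0(\a(a))$ (resp. $R_0(\b(a))$).

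The substance is the identity
\[
R_0(a)R_0(b)=R_0\bigl(R_0(a)b\bigr)+R_0\bigl(aR_0(b)\bigr)\mp\l\,R_0(ab),
\]
and the strategy is the BiHom analogue of the classical argument of Aguiar~\cite{Ag99}. Expanding each of $R_0(a)R_0(b)$, $R_0\bigl(R_0(a)b\bigr)$, $R_0\bigl(aR_0(b)\bigr)$ and $R_0(ab)$ by the definition of $R_0$ produces products of at most six factors built from the two independent summation copies $r=r^{1}\otimes r^{2}$ and $\bar r=\bar r^{1}\otimes\bar r^{2}$ of $r$ together with twisted copies of $a$ and $b$. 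I would then use BiHom-associativity \meqref{eq:1.3} repeatedly to reassociate all of these into one common normal form, using that $r$ is invariant under every word in $\a^{\pm1},\b^{\pm1},\psi^{\pm1},\omega^{\pm1}$ so that the $\a,\b,\psi,\omega$-decorations on its legs can be redistributed freely (this is where the bijectivity of $\psi,\omega$ enters). After this normalisation, each of the four expressions becomes the value of one fixed linear map $N\colon A^{\otimes 3}\lr A$ (depending on $a$ and $b$; the BiHom analogue of $x\otimes y\otimes z\mapsto x\cdot a\cdot y\cdot b\cdot z$) on, respectively, $r_{12}r_{23}$, $r_{13}r_{12}$, $r_{23}r_{13}$ and $r_{13}$, using the shorthands recalled before Theorem \mref{thm:14.5}; identifying $N(r_{13}r_{12})$ with $R_0\bigl(R_0(a)b\bigr)$ needs, in addition, the interchange $r\leftrightarrow\bar r$ of the two summation copies (it is this swap that matches the merged leg $\omega(r^{1})\bar r^{1}$ of $r_{13}r_{12}$ with the nesting produced by the outer copy of $r$ in $R_0(R_0(a)b)$). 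Applying $N$ to the $(\pm\l)$-abhYBe
\[
r_{13}r_{12}-r_{12}r_{23}+r_{23}r_{13}=\pm\l\,r_{13}
\]
then yields exactly $R_0\bigl(R_0(a)b\bigr)-R_0(a)R_0(b)+R_0\bigl(aR_0(b)\bigr)=\pm\l\,R_0(ab)$, which rearranges to the required relation.

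The only real obstacle is the bookkeeping in the previous paragraph: one must verify that the powers of $\a$ and $\b$ produced by each application of \meqref{eq:1.3} while reducing the four $R_0$-expressions to normal form agree \emph{exactly} with those appearing in the recalled formulas for $r_{12}r_{23}$, $r_{13}r_{12}$, $r_{23}r_{13}$, $r_{13}$, both before and after the $r\leftrightarrow\bar r$ relabelling; this is precisely the point at which the invariance of $r$ and the bijectivity of $\a,\b,\psi,\omega$ are genuinely used, and everything else is formal. Alternatively, the lemma can be obtained by specialising to the unitary case a more general correspondence between solutions of the nonhomogeneous associative BiHom-Yang--Baxter equation and Rota--Baxter BiHom-algebras.
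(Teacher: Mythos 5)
Your outline cannot be checked against an internal argument, because the paper does not prove this lemma: it is imported verbatim as \cite[Corollary 2.6]{MLi}, and the citation is the entire ``proof''. What you sketch is precisely the Aguiar-type argument that the cited source carries out in the BiHom setting, and its skeleton is correct: the rescaling observation (a weight-$w$ Rota--Baxter operator multiplied by a scalar $c$ has weight $cw$, so it suffices to show $R_0$ has weight $\mp\lambda$); the matching of $R_0(a)R_0(b)$, $R_0(R_0(a)b)$, $R_0(aR_0(b))$ and $R_0(ab)$ with $r_{12}r_{23}$, $r_{13}r_{12}$, $r_{23}r_{13}$ and $r_{13}$ respectively, including the $r\leftrightarrow\bar{r}$ relabelling needed for the second term and the use of the unit axioms \eqref{eq:1.5} to collapse the middle slot of $r_{13}$; and the correct placement of where invariance of $r$ and bijectivity of the structure maps enter. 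The signs are consistent with the conventions recalled before Theorem \ref{thm:14.5}. The caveat is that the deferred ``bookkeeping'' is, in the BiHom setting, the entire substance of the statement: the particular decorations $\beta^{2}\psi$, $\alpha\omega$ and $\alpha^{-1}\beta^{-1}$ in the definition of $R$ are exactly what make the repeated reassociations via \eqref{eq:1.3} close up, and verifying that one decorated trilinear map $N$ (built from the $\triangleright,\triangleleft$ actions of Remark \ref{rmk:14.2}, as in the paper's own computation for Theorem \ref{thm:14.5}) simultaneously produces all four expressions is a genuinely long calculation that your sketch asserts rather than performs. So what you have is a correct plan rather than a complete proof; filling it in amounts to reproving \cite[Corollary 2.6]{MLi}, which is exactly the work the paper sidesteps by citing it.
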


 The BiHom-version of Loday's dendriform algebra was introduced in \cite[Definition 3.1]{LMMP1}.

 \begin{lem}\mlabel{lem:14.11} 
 Let $(A,R,\alpha,\beta)$ be a Rota-Baxter BiHom-associative algebra of weight $\lambda$ and  $\prec,\succ: A\otimes A\lr A$ be linear maps defined by
 \begin{eqnarray*}
 &a\prec b=a R(b)+\lambda a b,\quad a\succ b=R(a)b&
 \end{eqnarray*}
 (resp.
 \begin{eqnarray*}
 &a\prec b=a R(b),\quad a\succ b=R(a)b+\lambda a b&)
 \end{eqnarray*}
 for all $a,b\in A$. Then $(A, \prec, \succ, \alpha, \beta)$ is a BiHom-dendriform algebra.
 \end{lem}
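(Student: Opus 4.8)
The strategy is the standard one for deriving dendriform structures from Rota--Baxter operators, carried over to the BiHom setting: verify that the three dendriform axioms for $(A,\prec,\succ,\alpha,\beta)$ follow from BiHom-associativity of $(A,\mu,\alpha,\beta)$ together with the weight-$\lambda$ Rota--Baxter identity
$$
R(a)R(b)=R\big(R(a)b+aR(b)+\lambda ab\big),\qquad \forall a,b\in A.
$$
First I would recall the defining relations of a BiHom-dendriform algebra from \cite[Definition 3.1]{LMMP1}: the multiplicativity conditions $\alpha(a\prec b)=\alpha(a)\prec\alpha(b)$, $\beta(a\prec b)=\beta(a)\prec\beta(b)$ and likewise for $\succ$, plus the three compatibility identities relating the "horizontal product" $a\star b:=a\prec b+a\succ b$ to $\prec$ and $\succ$ (the left, middle, and right dendriform axioms). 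The multiplicativity conditions are immediate, since $R$ commutes with $\alpha$ and $\beta$ (part of the Rota--Baxter BiHom-algebra axioms) and $\mu$ is compatible with $\alpha,\beta$ by Eq.(\ref{eq:1.2}); I would dispatch these in one line.

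The heart of the proof is the three axioms. For the first case ($a\prec b=aR(b)+\lambda ab$, $a\succ b=R(a)b$), note that $a\star b=aR(b)+R(a)b+\lambda ab=$ (by the RB identity applied "in reverse") is handled not by rewriting $a\star b$ as a single product but by expanding each side of each axiom using $\star b = \mu(a\otimes(R(b)+\lambda b))+\mu(R(a)\otimes b)$, grouping terms, and invoking BiHom-associativity Eq.(\ref{eq:1.3}) $\alpha(a)(bc)=(ab)\beta(c)$ to shift parentheses, and the RB identity to collapse products of the form $R(x)R(y)$. For instance, for the left axiom $(\alpha(a)\prec\beta(b))\prec\beta(c)$ versus $\alpha(a)\prec(b\star c)$ I would expand both sides into sums of iterated products in $\mu$, each carrying an $R$ on one or two slots and a power of $\lambda$, then match term by term. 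The key algebraic inputs at each step are exactly Eq.(\ref{eq:1.3}), the equivariance $R\circ\alpha=\alpha\circ R$, $R\circ\beta=\beta\circ R$, and the weight-$\lambda$ RB identity; no use of the unit, of $\psi,\omega$, or of bijectivity is needed. The "(resp.)" case ($a\prec b=aR(b)$, $a\succ b=R(a)b+\lambda ab$) is genuinely symmetric: it is obtained from the first by the same computation with the $\lambda ab$ term moved from $\prec$ to $\succ$, and one checks that $a\star b$ is unchanged, so the three axioms transform into each other under the obvious relabeling; I would either repeat the computation or remark that it is the mirror image and leave it to the reader, consistent with the style of the surrounding lemmas.

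The main obstacle is purely bookkeeping: each dendriform axiom, once both sides are fully expanded, produces a moderately large number of monomials (products of three elements with various placements of $R$ and various powers of $\lambda\in\{0,1,2\}$), and one must track the $\alpha$'s and $\beta$'s that BiHom-associativity forces onto the outer factors. The risk is a sign or placement slip, not a conceptual gap; to control it I would organize the verification by collecting, on each side, the coefficient of each "type" of monomial (classified by which slots carry $R$) and checking equality type by type, using $\alpha(ab)=\alpha(a)\alpha(b)$ and $\beta(ab)=\beta(a)\beta(b)$ to normalize the outer twists. Since the Rota--Baxter axiom of weight $\lambda$ is precisely what is needed to turn $R(x)R(y)$ into a weighted sum, and BiHom-associativity is what is needed to reassociate, the computation closes, and the result specializes (when $\alpha=\beta=\mathrm{id}$) to Loday's classical passage from weight-$\lambda$ Rota--Baxter algebras to dendriform algebras.
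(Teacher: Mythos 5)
Your plan is sound, but it takes a genuinely different route from the paper: the paper does not carry out the verification at all, it simply derives the lemma from \cite[Lemma 2.3, Theorem 2.5]{MYZZ}, i.e.\ from the machinery of Rota--Baxter \emph{paired} BiHom-modules, in which a weight-$\lambda$ Rota--Baxter BiHom-algebra acting on itself produces exactly the two dendriform structures of the statement (this is also the structural reason the lemma comes in two variants). Your direct term-by-term verification is self-contained and elementary, at the cost of the bookkeeping you describe; the citation route is shorter and situates the result in a more general framework. Two small cautions on your plan. First, the sample axiom you write, $(\alpha(a)\prec\beta(b))\prec\beta(c)$ versus $\alpha(a)\prec(b\star c)$, is not the form of the BiHom-dendriform axioms in \cite[Definition 3.1]{LMMP1}; they read $(a\prec b)\prec\beta(c)=\alpha(a)\prec(b\prec c+b\succ c)$, $(a\succ b)\prec\beta(c)=\alpha(a)\succ(b\prec c)$, $\alpha(a)\succ(b\succ c)=(a\prec b+a\succ b)\succ\beta(c)$, so you must use the correct placement of $\alpha,\beta$ before matching monomials. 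Second, the claim that the second variant is the ``mirror image'' of the first needs care in the BiHom setting: the opposite of a BiHom-associative algebra swaps the roles of $\alpha$ and $\beta$, so the reflection argument would a priori produce a BiHom-dendriform structure on $(A,\beta,\alpha)$ rather than $(A,\alpha,\beta)$; the safe option you also offer, namely redoing the (analogous) computation with the $\lambda ab$ term relocated, is the one to take. With those adjustments your computation closes and proves the same statement the paper obtains by citation.
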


 \begin{proof} It can be derived by \cite[Lemma 2.3, Theorem 2.5]{MYZZ}.
 \end{proof}

 \begin{rmk} When $\l=0$, Lemma \mref{lem:14.11} recovers \cite[Corollary 4.4]{LMMP1}.
 \end{rmk}

 \begin{cor}\mlabel{cor:14.12} Let $(A,\mu,1,\alpha,\beta,\psi,\omega,r)$ be a quasitriangular unitary $\l$-infBH-bialgebra. Define two binary operations $\prec,\succ$ on $A$ by
 \begin{eqnarray*}
 &a\succ b=-(\beta^{2}\psi(r^{1})(\alpha^{-1}\beta^{-1}(a)\alpha\omega(r^{2})))b,\ a\prec b=-a(\beta^{2}\psi(r^{1})(\alpha^{-1}\beta^{-1}(b)\alpha\omega(r^{2})))+\lambda ab&
 \end{eqnarray*}
 or
 \begin{eqnarray*}
 &a\succ b=-(\beta^{2}\psi(r^{1})(\alpha^{-1}\beta^{-1}(a)\alpha\omega(r^{2})))b+\lambda ab,\ a\prec b=-a(\beta^{2}\psi(r^{1})(\alpha^{-1}\beta^{-1}(b)\alpha\omega(r^{2}))).&
 \end{eqnarray*}
 Then the 5-tuple $(A,\prec,\succ,\alpha,\beta)$ is a BiHom-dendriform algebra.
 \end{cor}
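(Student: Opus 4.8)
The plan is to assemble the statement from three ingredients already established in the excerpt, composing them in the obvious way. First I would invoke Lemma \mref{lem:014.10}: since $(A,\mu,1,\alpha,\beta,\psi,\omega,r)$ is a quasitriangular unitary $\l$-infBH-bialgebra, the element $r$ is a solution of the $\l$-abhYBe (Definition \mref{de:14.6}) and is $\alpha,\beta,\psi,\omega$-invariant, so with $R\colon A\lr A$ defined by
\begin{eqnarray*}
&R(a)=-\beta^{2}\psi(r^{1})\big(\alpha^{-1}\beta^{-1}(a)\alpha\omega(r^{2})\big),\quad\forall a\in A,&
\end{eqnarray*}
the 4-tuple $(A,\mu,R,\alpha,\beta)$ is a Rota-Baxter BiHom-associative algebra of weight $\l$. (Here I am implicitly using that the quasitriangular structure supplies exactly the hypotheses of Lemma \mref{lem:014.10} with the upper sign, i.e. $(+\l)$-abhYBe; for an anti-quasitriangular input one would instead take the $(-\l)$-case, but the stated corollary concerns the quasitriangular one.)

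Next I would feed this Rota-Baxter BiHom-associative algebra into Lemma \mref{lem:14.11}. That lemma produces a BiHom-dendriform algebra $(A,\prec,\succ,\alpha,\beta)$ in two ways: either $a\prec b=aR(b)+\lambda ab$, $a\succ b=R(a)b$, or $a\prec b=aR(b)$, $a\succ b=R(a)b+\lambda ab$. Substituting the explicit formula for $R$ from the previous step, the first option gives
\begin{eqnarray*}
&a\succ b=-\big(\beta^{2}\psi(r^{1})(\alpha^{-1}\beta^{-1}(a)\alpha\omega(r^{2}))\big)b,\quad a\prec b=-a\big(\beta^{2}\psi(r^{1})(\alpha^{-1}\beta^{-1}(b)\alpha\omega(r^{2}))\big)+\lambda ab,&
\end{eqnarray*}
and the second option gives the pair with the $\lambda ab$ moved onto $\succ$; these are precisely the two pairs of binary operations displayed in the statement of Corollary \mref{cor:14.12}. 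Hence in either case $(A,\prec,\succ,\alpha,\beta)$ is a BiHom-dendriform algebra, which is the assertion to be proved.

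The only genuine point to check is the bookkeeping in the substitution: one must confirm that the $a\succ b=R(a)b$ (resp. $R(a)b+\lambda ab$) and $a\prec b=aR(b)+\lambda ab$ (resp. $aR(b)$) of Lemma \mref{lem:14.11}, after replacing $R$ by the formula of Lemma \mref{lem:014.10}, reproduce verbatim the two displays in the corollary, and that the bijectivity and invariance hypotheses propagate correctly (bijectivity of $\alpha,\beta,\psi,\omega$ and $\alpha,\beta,\psi,\omega$-invariance of $r$ are part of the ambient assumptions on a quasitriangular unitary $\l$-infBH-bialgebra via Corollary \mref{cor:waybe}, so Lemma \mref{lem:014.10} applies). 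This is entirely routine; I would spell it out in one line. I do not anticipate a real obstacle: the corollary is a direct composite of Lemma \mref{lem:014.10} followed by Lemma \mref{lem:14.11}, and the proof can be written as ``Combine Lemma \mref{lem:014.10} and Lemma \mref{lem:14.11}.''
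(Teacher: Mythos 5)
Your proposal is correct and coincides with the paper's own proof, which reads simply ``The result follows from Lemma \mref{lem:014.10} and Lemma \mref{lem:14.11}.'' The sign bookkeeping (taking the upper sign in Lemma \mref{lem:014.10} for the quasitriangular case, so $R(a)=-\beta^{2}\psi(r^{1})(\alpha^{-1}\beta^{-1}(a)\alpha\omega(r^{2}))$) and the substitution into the two dendriform splittings of Lemma \mref{lem:14.11} are exactly as you describe.
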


 \begin{proof}
 The result follows from Lemma \mref{lem:014.10} and Lemma \mref{lem:14.11}.
 \end{proof}

 \begin{rmk} If we delete the minus signs ``$-$" in Corollary \mref{cor:14.12}, then the corresponding results for the case of anti-quasitriangular unitary $\l$-infBH-bialgebra can be obtained.
 \end{rmk}

 \begin{defi}\mlabel{de:13.1} (\cite[Definition 3.1]{LMMP6}) A {\bf (left) BiHom-pre-Lie algebra} $(A,\cdot,\alpha,\beta)$ is a 4-tuple in which $A$ is a vector space and $\cdot: A \otimes A \lr  A$, $\alpha,\beta: A \lr  A$ are linear maps satisfying $\alpha\circ\beta=\beta\circ\alpha$, $\alpha(a\cdot b)=\alpha(a)\cdot\alpha(b)$, $\beta(a\cdot b)=\beta(a)\cdot\beta(b)$ and
 \begin{eqnarray}
 &\alpha\beta(a)\cdot(\alpha(b)\cdot c)-(\beta(a)\cdot\alpha(b))\cdot\beta(c)=\alpha\beta(b)\cdot(\alpha(a)\cdot c)
 -(\beta(b)\cdot\alpha(a))\cdot\beta(c),&\mlabel{eq:13.1}
 \end{eqnarray}
 for all $a,b,c \in A$.
 \end{defi}

 Now we get a new construction of BiHom-pre-Lie algebra which ensures that the following diagram is commutative.
$$\hspace{10mm} \xymatrix@C4.3em{
&\text{Quasitriangular}\atop \text{$\l$-infBH Bialgebras}
\ar@2{->}^{\text{Lemma}\ \mref{lem:014.10}}[r]
\ar@2{->}_{\text{Corollary}\  \mref{cor:waybe}}[d]
&\text{$\lambda$-Rota Baxter}\atop \text{BiHom-algebras}
\ar@2{->}^{\text{Lemma}\ \mref{lem:14.11}}[r]
&\text{BiHom-Dendriform}\atop \text{algebras}\\
&\text{$\lambda$-infBH}\atop \text{Bialgebras}
\ar@2{->}^{\text{Theorem}\  \mref{thm:13.3}}[rr]
&&\text{BiHom-Pre-Lie}\atop \text{algebras}
\ar@2{<-}^{\text{\cite[Proposition 3.6]{LMMP6}}}[u]
&&}
$$

 \begin{thm}\mlabel{thm:13.3} Let $(A,\mu,\Delta,\alpha,\beta,\psi,\omega)$ be a $\l$-infBH-bialgebra such that $\alpha,\beta,\psi,\omega$ are invertible. Then $(A,\star,\alpha,\beta)$ is a BiHom-pre-Lie algebra, where
 \begin{eqnarray}
 &\star:A\otimes A\longrightarrow A,\ a\star b=(\alpha^{-2}\beta\omega^{-1}(b_{1})\beta^{-1}(a))\psi^{-1}(b_{2}).&\mlabel{eq:13.2}
 \end{eqnarray}
 \end{thm}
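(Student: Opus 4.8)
The plan is to verify directly that the operation $\star$ defined by Eq.(\mref{eq:13.2}) satisfies the three multiplicativity conditions and the left BiHom-pre-Lie identity Eq.(\mref{eq:13.1}). First I would check that $\alpha$ and $\beta$ commute (given), that $\alpha(a\star b)=\alpha(a)\star\alpha(b)$ and $\beta(a\star b)=\beta(a)\star\beta(b)$; these follow by a routine computation using Eqs.(\mref{eq:12.1}), (\mref{eq:12.2}), the fact that $\alpha,\beta$ are algebra morphisms, and that all four maps $\alpha,\beta,\psi,\omega$ pairwise commute, so that e.g.\ $\alpha$ can be pushed past $\alpha^{-2}\beta\omega^{-1}$, past $\psi^{-1}$, and distributed over $\mu$ and $\Delta$.

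The heart of the proof is Eq.(\mref{eq:13.1}). The strategy is to expand both $\alpha\beta(a)\star(\alpha(b)\star c)-(\beta(a)\star\alpha(b))\star\beta(c)$ using the definition of $\star$, writing $\D(c)=c_1\otimes c_2$ and applying BiHom-coassociativity (Eq.(\mref{eq:1.9})) wherever a comultiplication is applied to a component already carrying $\D$, and using the compatibility condition Eq.(\mref{eq:12.4}) whenever a comultiplication is applied to a product. The key point is that the $\lambda$-term in Eq.(\mref{eq:12.4}) contributes a summand of the form (constant)$\cdot(\text{something})\otimes(\text{something})$ that, after the two pieces of the left-hand side of Eq.(\mref{eq:13.1}) are subtracted, cancels against its counterpart; what remains should be manifestly symmetric in $a$ and $b$. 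Concretely, after expansion the left side of Eq.(\mref{eq:13.1}) should reduce to an expression built only from $\mu$, $\D$, $\alpha,\beta,\psi,\omega$ applied to $a$, $b$, $c_1$, $c_2$ (or $c_{11},c_{12},c_2$ after reassociating), together with the BiHom-associativity relation Eq.(\mref{eq:1.3}); the resulting expression should be visibly invariant under swapping $a\leftrightarrow b$, which is exactly Eq.(\mref{eq:13.1}).

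The main obstacle I anticipate is bookkeeping: after applying Eq.(\mref{eq:12.4}) and Eq.(\mref{eq:1.9}), each side of Eq.(\mref{eq:13.1}) breaks into several terms (roughly three from the compatibility condition, further multiplied out), each decorated with a precise string of powers of $\alpha^{\pm1},\beta^{\pm1},\psi^{-1},\omega^{-1}$, and one must check that the structure maps match up so that terms cancel or combine correctly. The invertibility of $\alpha,\beta,\psi,\omega$ is used repeatedly to move these maps through $\mu$ and $\D$ via Eqs.(\mref{eq:12.1})--(\mref{eq:12.3}); the BiHom-associativity Eq.(\mref{eq:1.3}) is needed to reassociate the triple products that arise inside each summand. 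I would organize the computation by isolating the part of $a\star(b\star c)$ in which $\D$ hits a product (hence Eq.(\mref{eq:12.4}) is used) and the part in which it does not, treating the analogous splitting of $(a\star b)\star c$ in parallel, and then matching the $a$-first terms of one side with the $b$-first terms of the other. Once the dust settles, the symmetry in $a$ and $b$ should be transparent, completing the verification of Eq.(\mref{eq:13.1}) and hence the theorem.
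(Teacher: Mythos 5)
Your plan coincides with the paper's own proof: expand $\alpha\beta(a)\star(\alpha(b)\star c)$ by first computing $\Delta(\alpha(b)\star c)$ via Eq.(\ref{eq:12.4}) (applied twice, since $\Delta$ hits a nested product), expand $(\beta(a)\star\alpha(b))\star\beta(c)$ using Eqs.(\ref{eq:12.2}) and (\ref{eq:1.3}), subtract, simplify with Eqs.(\ref{eq:1.3}) and (\ref{eq:1.9}), and observe that the result is invariant under $a\leftrightarrow b$. One small correction to your anticipated bookkeeping: the $\lambda$-terms produced by Eq.(\ref{eq:12.4}) do not cancel in the difference --- indeed $(\beta(a)\star\alpha(b))\star\beta(c)$ produces no $\lambda$-terms at all, since there $\Delta$ is only ever applied to $\alpha(b)$ and $\beta(c)$ rather than to a product --- instead the two $\lambda$-terms survive and, after an application of Eq.(\ref{eq:1.3}), form a pair exchanged by $a\leftrightarrow b$, which is exactly what your final symmetry criterion requires.
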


 \begin{proof} For all $a,b,c \in A$, we calculate
 \begin{eqnarray*}
 \Delta(\alpha(b)\star c)\hspace{-4mm}&\stackrel{(\mref{eq:1.7})(\mref{eq:12.3})(\mref{eq:12.4})(\mref{eq:1.2})}=&
 \hspace{-4mm}(\alpha^{-2}\beta(c_{1})
 \alpha\beta^{-1}\omega(b))\psi^{-1}(c_{21})\otimes \beta\psi^{-1}(c_{22})
 +\alpha^{-1}\beta(c_{1})\alpha^{2}\beta^{-1}(b_{1})\otimes \alpha(b_{2})c_{2}\\
 &&+\beta\omega^{-1}(c_{11})\otimes (\alpha^{-2}\beta\omega^{-1}(c_{12})\alpha\beta^{-1}\psi(b))c_{2}\\
 &&+\lambda\beta(c_{1})\otimes\alpha\psi(b)c_{2}
 +\lambda\alpha^{-1}\beta(c_{1})\alpha^{2}\beta^{-1}\omega(b)\otimes\beta(c_{2})
 \end{eqnarray*}
 and
 \begin{eqnarray*}
 \alpha\beta(a)\star(\alpha(b)\star c)&=&
 (((\alpha^{-4}\beta^{2}\omega^{-1}(c_{1})
 \alpha^{-1}(b))\alpha^{-2}\beta\psi^{-1}\omega^{-1}(c_{21}))\alpha(a))\beta\psi^{-2}(c_{22})\\
 &&+((\alpha^{-3}\beta^{2}\omega^{-1}(c_{1})
 \omega^{-1}(b_{1}))\alpha(a))(\alpha\psi^{-1}(b_{2})\psi^{-1}(c_{2}))\\
 &&+(\alpha^{-2}\beta^{2}\omega^{-2}(c_{11})\alpha(a)) ((\alpha^{-2}\beta\psi^{-1} \omega^{-1}(c_{12})\alpha\beta^{-1}(b))\psi^{-1} (c_{2}))\\
 &&+\lambda(\alpha^{-2}\beta^{2}\omega^{-1}(c_{1})\alpha(a))(\alpha(b)\psi^{-1}(c_{2})) +\lambda((\alpha^{-3}\beta^{2}\omega^{-1}(c_{1})b)
 \alpha(a))\psi^{-1}\beta(c_{2}).
 \end{eqnarray*}
 Morever,
 \begin{eqnarray*}
 (\beta(a)\star\alpha(b))\star\beta(c)
 &\stackrel{(\mref{eq:12.2})}=&(\alpha^{-2}\beta^{2}\omega^{-1}(c_{1})
 ((\alpha^{-1}\omega^{-1}(b_{1})\beta^{-1}(a))\alpha\beta^{-1}\psi^{-1}(b_{2})))\psi^{-1}\beta(c_{2})\\
 &\stackrel{(\mref{eq:1.3})}=& ((\alpha^{-3}\beta^{2}\omega^{-1}(c_{1})
 \omega^{-1}(b_{1}))\alpha(a))(\alpha\psi^{-1}(b_{2})\psi^{-1}(c_{2})).
 \end{eqnarray*}
 Hence,
 \begin{eqnarray*}
 &&\hspace{-20mm}\alpha\beta(a)\star(\alpha(b)\star c)-(\beta(a)\star\alpha(b))\star\beta(c)\\
 &=&(((\alpha^{-4}\beta^{2}\omega^{-1}(c_{1})
 \alpha^{-1}(b))\alpha^{-2}\beta\psi^{-1}\omega^{-1}(c_{21}))\alpha(a))\beta\psi^{-2}(c_{22})\\
 &&+(\alpha^{-2}\beta^{2}\omega^{-2}(c_{11})\alpha(a)) ((\alpha^{-2}\beta\psi^{-1} \omega^{-1}(c_{12})\alpha\beta^{-1}(b))\psi^{-1} (c_{2}))\\
 &&+\lambda(\alpha^{-2}\beta^{2}\omega^{-1}(c_{1})\alpha(a))(\alpha(b)\psi^{-1}(c_{2})) +\lambda((\alpha^{-3}\beta^{2}\omega^{-1}(c_{1})b)
 \alpha(a))\psi^{-1}\beta(c_{2})\\
 &\stackrel{(\mref{eq:1.3})(\mref{eq:1.9})}=&(((\alpha^{-4}\beta^{2}\omega^{-1}(c_{1})
 \alpha^{-1}(b))\alpha^{-2}\beta\psi^{-1}\omega^{-1}(c_{21}))\alpha(a))\beta\psi^{-2}(c_{22})\\
 &&+(((\alpha^{-4}\beta^{2}\omega^{-1}(c_{1})\alpha^{-1}(a)) \alpha^{-2}\beta\psi^{-1} \omega^{-1}(c_{21}))\alpha(b))\psi^{-2}\beta (c_{22})\\
 &&+\lambda((\alpha^{-3}\beta^{2}\omega^{-1}(c_{1})a)\alpha(b))\psi^{-1}\beta(c_{2}) +\lambda((\alpha^{-3}\beta^{2}\omega^{-1}(c_{1})b)
 \alpha(a))\psi^{-1}\beta(c_{2}),
 \end{eqnarray*}
 completing the proof since the positions of $a$ and $b$ are symmetric.
 \end{proof}

 We next provide a new way to  construct BiHom-pre-Lie algebra from $\l$-infBH-bialgebra in which the structure maps $\alpha,\beta,\psi,\omega$ are {\bf not} invertible. Here we omit the proof since its proof is similar to Theorem \mref{thm:13.3}.

 \begin{thm}\mlabel{thm:13.10} Let $(A,\mu,\Delta,\alpha,\beta,\psi,\omega)$ be a $\l$-infBH-bialgebra. Then $(A, \star, \alpha^{2}\beta, \alpha^{2}\beta^{2}\psi\omega)$ is a BiHom-pre-Lie algebra, where
 \begin{eqnarray}
 &\star:A\otimes A\longrightarrow A,\ a\star b=(\beta^{2}\psi(b_{1})\alpha(a))\alpha^{2}\beta\omega(b_{2}).&\mlabel{eq:13.6}
 \end{eqnarray}
 \end{thm}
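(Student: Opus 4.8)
The plan is to re-run the proof of Theorem~\mref{thm:13.3}, replacing every inverse structure map by a suitable positive power so that bijectivity of $\a,\b,\psi,\om$ is never used. Write $\bar\a:=\a^{2}\b$ and $\bar\b:=\a^{2}\b^{2}\psi\om$ for the two twisting maps. First I would dispose of everything in Definition~\mref{de:13.1} except the pre-Lie identity \eqref{eq:13.1}: the relation $\bar\a\circ\bar\b=\bar\b\circ\bar\a$ is immediate since $\a,\b,\psi,\om$ pairwise commute by Eqs.~\eqref{eq:1.2}, \eqref{eq:12.1} and \eqref{eq:12.3}; and the multiplicativity $\bar\a(a\star b)=\bar\a(a)\star\bar\a(b)$, $\bar\b(a\star b)=\bar\b(a)\star\bar\b(b)$ follows from the multiplicativity of each of $\a,\b,\psi,\om$ with respect to $\mu$ together with $(\bar\a\o\bar\a)\circ\D=\D\circ\bar\a$ and $(\bar\b\o\bar\b)\circ\D=\D\circ\bar\b$, which themselves come from Eqs.~\eqref{eq:12.2} and \eqref{eq:1.7}.

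For the identity \eqref{eq:13.1}, the computational heart is the expansion of $\D\big(\bar\a(b)\star c\big)$, exactly as in the proof of Theorem~\mref{thm:13.3}. Since $\bar\a(b)\star c=\big(\b^{2}\psi(c_{1})\,\a\bar\a(b)\big)\,\a^{2}\b\om(c_{2})$ is a product whose first factor is again a product, two applications of the compatibility condition \eqref{eq:12.4}, combined with the intertwining relations \eqref{eq:12.2}, \eqref{eq:1.7}, \eqref{eq:12.3} and with BiHom-coassociativity \eqref{eq:1.9}, express $\D\big(\bar\a(b)\star c\big)$ as a sum of five terms in $A\o A$. Substituting this into the definition of $\star$ writes $\bar\a\bar\b(a)\star(\bar\a(b)\star c)$ as a sum of five iterated products in $A$ involving various powers of $\a,\b,\psi,\om$.

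Next I would expand $(\bar\b(a)\star\bar\a(b))\star\bar\b(c)$ directly: forming $\bar\b(a)\star\bar\a(b)$ needs only $(\a\o\a)\circ\D=\D\circ\a$, and the outer $\star$, after one use of BiHom-associativity \eqref{eq:1.3}, collapses to a single term --- one of the five terms just obtained --- which therefore cancels. The difference $\bar\a\bar\b(a)\star(\bar\a(b)\star c)-(\bar\b(a)\star\bar\a(b))\star\bar\b(c)$ is thus a sum of four terms, and one further round of \eqref{eq:1.3} and \eqref{eq:1.9} puts it in a form manifestly invariant under the exchange $a\leftrightarrow b$: two of the four terms become individually symmetric and the two $\l$-terms are interchanged. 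Since the right-hand side of \eqref{eq:13.1} is precisely this expression with $a$ and $b$ swapped, \eqref{eq:13.1} holds, and together with the multiplicativity from the first paragraph this shows that $(A,\star,\a^{2}\b,\a^{2}\b^{2}\psi\om)$ is a BiHom-pre-Lie algebra.

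The only genuine difficulty is the bookkeeping of exponents: one has to track the powers of the four commuting maps through three nested brackets and through the reassociation $(\D\o\psi)\circ\D=(\om\o\D)\circ\D$. The choices $\bar\a=\a^{2}\b$ and $\bar\b=\a^{2}\b^{2}\psi\om$ are exactly what makes every exponent appearing in \eqref{eq:13.1} balance without inverting any structure map, and confirming that they do is forced, term by term, by matching the computation with that of Theorem~\mref{thm:13.3}.
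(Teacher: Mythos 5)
Your proposal is correct and is essentially the paper's own argument: the paper omits the proof of Theorem \ref{thm:13.10}, stating only that it is similar to that of Theorem \ref{thm:13.3}, which is precisely the computation you outline, with the inverse structure maps traded for the positive powers absorbed into $\alpha^{2}\beta$ and $\alpha^{2}\beta^{2}\psi\omega$. (One cosmetic remark: in the final symmetrization the two non-$\lambda$ terms are swapped with each other under $a\leftrightarrow b$ rather than being individually symmetric, exactly as in the displayed computation for Theorem \ref{thm:13.3}, but this does not affect the conclusion.)
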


 \begin{rmk} \cite[Theorem 4.6.]{LMMP3} is the special case of Theorem \mref{thm:13.10} for $\l=0$.
 \end{rmk}

 \begin{rmk} From the perspective of the commutative diagram below, Example \ref{ex:12.3} is very interesting.
$$\hspace{-25mm} \xymatrix@C5.5em@R=6ex@W=2ex@H=1em{
&& \text{1(\text{resp.}-1)-infBH-}\atop \text{bialgebras}
\ar@2{->}^{\text{Theorem}\  \ref{thm:13.3}\quad}[r]
\ar@2{<-}^{\l=1 (\text{resp.}-1)\text{ in Example}\  \ref{ex:12.3}\quad}[d]&
\text{BiHom-pre-Lie}\atop \text{algebras}
\ar@2{->}^{\text{\cite[Proposition 3.4]{LMMP6}}}[d]& \\
&&\text{BiHom-associative}\atop \text{algebras}\ar@2{->}^{\quad\text{\cite[Proposition 3.15]{GMMP}}}[r]&\text{BiHom-Lie}\atop \text{algebras}\\
 & }
$$
 \end{rmk}

\subsection{Comodules over (anti-)coquasitriangular $\l$-infBH-bialgebra}\label{se:comodule}
 Coquasitriangular infinitesimal bialgebras were introduced in \cite{MMS} from the mixed bialgebra and its BiHom-version was studied in \cite{MLiY}. In this subsection, we prove that every comodule over (anti-)coquasitriangular $\l$-infBH-bialgebra can induce a $\l$-infBH-Hopf module. Most of the conclusions in this subsection are parallel to those in Section \mref{se:module}, we sketch the proof for the convenience of reading.

 Let $\l$ be a given element in $K$, $(C, \D, \v, \psi, \om)$ be a counitary BiHom-coassociative coalgebra satisfying that $\psi,\omega$ are bijective, $\a, \b: C\lr C$ be two linear maps such that Eqs.(\mref{eq:12.1}), (\mref{eq:12.2}) and (\mref{eq:12.31}) hold, and $\sigma \in (C\otimes C)^{\ast}$ be $\a, \b, \psi, \om$-invariant. Here an element $\sigma \in (C\otimes C)^{\ast}$ is $F$-invariant if $\sigma\circ(F\otimes F)=\sigma$, where $F:C\longrightarrow C$ is a linear map. Define linear map $\mu_{\sigma}~(\hbox{resp.}~\widetilde{\mu_{\sigma}}): C\otimes C\longrightarrow C$ by
 \begin{eqnarray}
 &\mu_{\sigma}(c\otimes d)=\alpha\omega^{-1}(c_{1})\sigma(c_{2},\psi (d))-\sigma(\omega (c),d_{1})\beta\psi^{-1}(d_{2})-\lambda\alpha(c)\varepsilon(d)&\mlabel{eq:01.04}
 \end{eqnarray}
 (resp. \begin{eqnarray}
 &\widetilde{\mu_{\sigma}}(c\otimes d)=\alpha\omega^{-1}(c_{1})\sigma(c_{2},\psi (d))-\sigma(\omega (c),d_{1})\beta\psi^{-1}(d_{2})-\lambda\varepsilon(c)\beta(d).)&\mlabel{eq:01.08}
 \end{eqnarray}

 In what follows, we only prove the case corresponding to $\mu_{\sigma}$, the other case for $\widetilde{\mu_{\sigma}}$ can be checked similarly.

 \begin{lem}\mlabel{lem:12.04} The $\mu_{\sigma}$ (resp. $\widetilde{\mu_{\sigma}}$) defined by Eq.(\mref{eq:01.04}) (resp. Eq.(\mref{eq:01.08})) is a $\lambda$-BiHom-coderivation.
 \end{lem}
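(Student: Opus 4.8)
The plan is to verify directly, for $\vth=\mu_{\sigma}$, the two conditions defining a $\lambda$-BiHom-coderivation in Definition~\mref{de:12.14}: the intertwining relations~(\mref{eq:12.11}) and the co-Leibniz identity~(\mref{eq:12.12}). This statement is the exact dual of Lemma~\mref{lem:14.4} (with $C$ in the role of the algebra and $\sigma\in(C\o C)^{\ast}$ in the role of $r\in A\o A$), but since $C$ is not assumed finite dimensional I would not pass through Theorem~\mref{thm:12.16}; I would compute by hand along the lines of the proof of Lemma~\mref{lem:14.4}. Condition~(\mref{eq:12.11}), i.e.\ $\mu_{\sigma}\ci(F\o F)=F\ci\mu_{\sigma}$ for $F\in\{\alpha,\beta,\psi,\omega\}$, is routine: each of the three summands of $\mu_{\sigma}$ intertwines with $F$ because $F$ commutes with $\alpha,\beta,\psi,\omega$ (Eq.~(\mref{eq:12.1})), because $\Delta\ci F=(F\o F)\ci\Delta$ and hence $\Delta\ci F^{-1}=(F^{-1}\o F^{-1})\ci\Delta$ (Eq.~(\mref{eq:12.2}) for $\alpha,\beta$; Eq.~(\mref{eq:1.7}) for $\psi,\omega$), because $\varepsilon\ci F=\varepsilon$ (Eqs.~(\mref{eq:1.11}),~(\mref{eq:12.31})), and because $\sigma$ is $F$-invariant; expanding $\mu_{\sigma}(F(c)\o F(d))$ and collecting $F$ to the outside then gives $F(\mu_{\sigma}(c\o d))$.

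The substance is condition~(\mref{eq:12.12}), which in operator form on $C\o C$ reads $\Delta\ci\mu_{\sigma}=(\mu_{\sigma}\o\beta)\ci(\omega\o\Delta)+(\alpha\o\mu_{\sigma})\ci(\Delta\o\psi)+\lambda(\alpha\omega\o\beta\psi)$. I would expand the left side $\Delta\mu_{\sigma}(c\o d)$ using $\Delta\alpha\omega^{-1}=(\alpha\omega^{-1}\o\alpha\omega^{-1})\Delta$, $\Delta\beta\psi^{-1}=(\beta\psi^{-1}\o\beta\psi^{-1})\Delta$ and $\Delta\alpha=(\alpha\o\alpha)\Delta$, obtaining three terms: one carrying the threefold coproduct of $c$ with the scalar $\sigma(c_{2},\psi(d))$, one carrying the threefold coproduct of $d$ with the scalar $\sigma(\omega(c),d_{1})$, and the $\lambda$-term $-\lambda\alpha(c_{1})\o\alpha(c_{2})\varepsilon(d)$. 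On the right side I would expand $\mu_{\sigma}(\omega(c)\o d_{1})$ and $\mu_{\sigma}(c_{2}\o\psi(d))$ and match the resulting pieces: the ``$\sigma$-on-the-rightmost-leg'' piece of $\mu_{\sigma}(c_{2}\o\psi(d))$ becomes the first left-side term once the threefold coproduct of $c$ is rewritten by BiHom-coassociativity in the form $\omega(c_{1})\o c_{21}\o c_{22}=c_{11}\o c_{12}\o\psi(c_{2})$ (Eq.~(\mref{eq:1.9})) and the residual twist is absorbed using $\psi$-invariance of $\sigma$; symmetrically, the ``$\sigma$-on-the-leftmost-leg'' piece of $\mu_{\sigma}(\omega(c)\o d_{1})$ becomes the second left-side term via $d_{11}\o d_{12}\o\psi(d_{2})=\omega(d_{1})\o d_{21}\o d_{22}$ and $\omega$-invariance of $\sigma$. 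The $\lambda$-piece of $\mu_{\sigma}(c_{2}\o\psi(d))$ is already the third left-side term (using $\varepsilon\ci\psi=\varepsilon$), while the $\lambda$-piece of $\mu_{\sigma}(\omega(c)\o d_{1})$ collapses, by the counit axiom $(\varepsilon\o\id)\Delta=\psi$ (Eq.~(\mref{eq:1.11})), to $-\lambda\alpha\omega(c)\o\beta\psi(d)$ and cancels the explicit right-side term $+\lambda\alpha\omega(c)\o\beta\psi(d)$; finally the two leftover ``mixed'' pieces are the same tensor after moving the scalar $\sigma(\omega(c_{2}),\psi(d_{1}))$ across the tensor sign, so they cancel with opposite signs.

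The step I expect to be the main obstacle is the bookkeeping of the $\psi$/$\omega$ twists: BiHom-coassociativity only equates the twisted iterates $(\Delta\o\psi)\Delta=(\omega\o\Delta)\Delta$, so each use must be preceded by inserting or deleting a $\psi^{\pm 1}$ in the appropriate leg, and the stray twist then has to be absorbed into $\sigma$ via its $\alpha,\beta,\psi,\omega$-invariance --- this is precisely where the invariance hypothesis on $\sigma$ enters, mirroring the role of $\alpha,\beta,\psi,\omega$-invariance of $r$ in Lemma~\mref{lem:14.4}. Finally, the case of $\widetilde{\mu_{\sigma}}$ is entirely parallel, and I would only record the changes: one uses $(\id\o\varepsilon)\Delta=\omega$ in place of $(\varepsilon\o\id)\Delta=\psi$ and $\varepsilon\ci\omega=\varepsilon$ in place of $\varepsilon\ci\psi=\varepsilon$ (both in Eq.~(\mref{eq:1.11})), so that the term $-\lambda\varepsilon(c)\beta(d)$ of $\widetilde{\mu_{\sigma}}$ plays the symmetric role and the same cancellations occur.
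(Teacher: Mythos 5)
Your proposal is correct and follows essentially the same route as the paper: the intertwining conditions (\mref{eq:12.11}) are dispatched via the invariance of $\sigma$, and the co-Leibniz identity (\mref{eq:12.12}) is verified by expanding $\mu_{\sigma}(\omega(c)\otimes d_{1})\otimes\beta(d_{2})+\alpha(c_{1})\otimes\mu_{\sigma}(c_{2}\otimes\psi(d))+\lambda\alpha\omega(c)\otimes\beta\psi(d)$ and reducing it to $\Delta\circ\mu_{\sigma}(c\otimes d)$ using Eqs.~(\mref{eq:1.9}), (\mref{eq:1.11}) and the $\psi,\omega$-invariance of $\sigma$, with the same cancellations of the mixed and $\lambda$-pieces that you describe. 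The paper's proof is just a terser version of your computation, and it likewise leaves the $\widetilde{\mu_{\sigma}}$ case to the symmetric argument you indicate.
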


 \begin{proof} By $\sigma \in (C\otimes C)^{\ast}$ is $\alpha,\beta,\psi,\omega$-invariant, we know that Eq.(\mref{eq:12.11}) holds. For $a,b \in A$, Eq.(\mref{eq:12.12}) for $\mu_{\s}$ can be proved below.
 \begin{eqnarray*}
 &&\hspace{-20mm}\mu_{\sigma}(\omega (c)\otimes d_{1})\otimes \beta(d_{2})+\alpha(c_{1})\otimes \mu_{\sigma}(c_{2}\otimes \psi(d))+\lambda\alpha\omega(c)\otimes\beta\psi(d)\\
 &\stackrel{(\mref{eq:1.9})(\mref{eq:1.11})}=&\alpha\omega^{-1}(c_{11})\otimes \alpha\omega^{-1}(c_{12})\sigma (\psi(c_{2}),\psi^{2}(d))-\sigma(\omega(c),d_{1})\beta\psi^{-1}(d_{21})\otimes \beta\psi^{-1}(d_{22})\\
 &&-\lambda\alpha(c_{1})\otimes\alpha(c_{2})\varepsilon(d)\\
 &\stackrel{}=&\Delta \circ\mu_{\sigma}(c\otimes d),
 \end{eqnarray*}
 as desired.
 \end{proof}

 \begin{thm}\mlabel{thm:12.05} Let $(C,\D,\psi,\omega)$ be a counitary BiHom-coassociative coalgebra such that $\psi,\omega$ are bijective, $\alpha,\beta: C\rightarrow C$ be linear maps, $\sigma \in (C\otimes C)^{\ast}$ be $\alpha, \beta, \psi, \omega$-invariant and moreover Eqs.(\mref{eq:12.1}), (\mref{eq:12.2}) and (\mref{eq:12.31}) hold. Then the $\lambda$-BiHom-coderivation $\mu_{\sigma}$ (resp.\  $\widetilde{\mu_{\sigma}}$) defined by Eq.(\mref{eq:01.04}) (resp.\  Eq.(\mref{eq:01.08})) is BiHom-associative if and only if for all $c, d, e \in C$,
 \begin{eqnarray}
 &&\alpha^{2}\omega^{-1}(c_{1})(\sigma(\alpha\omega^{-1}(c_{21}),\beta\psi(e))
 \sigma(c_{22},\psi^{2}(d))-\sigma(\omega(c_{2}),\psi(d_{1}))\sigma(d_{2},\psi(e))\nonumber\\
 &&+\sigma(\alpha(c_{2}),\beta(e_{2}))\sigma(\omega(d),e_{1})-\lambda(\hbox{resp.}\ (-\l))\sigma(\alpha(c_{2}),\psi\beta(e))\varepsilon(d))\nonumber\\
 &&=(\sigma(\alpha(c_{1}),\beta(e_{1}))\sigma(c_{2},\psi(d))
 -\sigma(\omega(c),d_{1}) \sigma(\omega(d_{2}),\psi(e_{1}))\mlabel{eq:01.05}\\
 &&+\sigma(\alpha\omega(c),\beta\psi^{-1}(e_{12}))\sigma(\omega^{2}(d),e_{11})
 -\lambda(\hbox{resp.}\ (-\l))\sigma(\alpha\omega(c),\beta(e_{1}))\varepsilon(d))
 \beta^{2}\psi^{-1}(e_{2})\nonumber.
 \end{eqnarray}
 \end{thm}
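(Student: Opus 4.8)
The statement is the exact dual of Theorem~\ref{thm:14.5}: one replaces the unitary BiHom-associative algebra $(A,\mu,1,\alpha,\beta)$ by the counitary BiHom-coassociative coalgebra $(C,\Delta,\varepsilon,\psi,\omega)$, the $\alpha,\beta,\psi,\omega$-invariant element $r\in A\otimes A$ by the $\alpha,\beta,\psi,\omega$-invariant functional $\sigma\in(C\otimes C)^{\ast}$, and reverses all the arrows. Accordingly, the plan is to mimic the proof of Theorem~\ref{thm:14.5}. By Lemma~\ref{lem:12.04} the map $\mu_{\sigma}$ (resp.\ $\widetilde{\mu_{\sigma}}$) is already a $\lambda$-BiHom-coderivation; in particular $\alpha$ and $\beta$ are multiplicative for $\mu_{\sigma}$ (this uses \eqref{eq:12.2} and the invariance of $\sigma$), so the only property left to characterize is the BiHom-associativity axiom \eqref{eq:1.3}:
\[
\mu_{\sigma}\big(\alpha(c)\otimes\mu_{\sigma}(d\otimes e)\big)=\mu_{\sigma}\big(\mu_{\sigma}(c\otimes d)\otimes\beta(e)\big),\qquad c,d,e\in C .
\]
Since this must hold for all $c,d,e$, the ``if'' and ``only if'' directions come out together.

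First I would expand the right-hand side. Substituting the three summands of $\mu_{\sigma}(c\otimes d)$ from \eqref{eq:01.04} and then applying $\mu_{\sigma}({-}\otimes\beta(e))$ to each produces nine terms; the coproducts that appear, some of them nested, are untangled using BiHom-coassociativity \eqref{eq:1.9} in the form $(\Delta\otimes\psi)\Delta=(\omega\otimes\Delta)\Delta$ — this is what generates the iterated-coproduct entries $c_{21},c_{22}$ and $e_{11},e_{12}$ that occur in \eqref{eq:01.05} — together with the counit relations \eqref{eq:1.11}, the commutation relations \eqref{eq:12.1}, and the $\alpha,\beta,\psi,\omega$-invariance of $\sigma$. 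The three terms carrying a factor $\varepsilon$ collapse, via the counit axiom, into the ``$-\lambda$'' contributions. Next I would carry out the mirror-image computation for the left-hand side $\mu_{\sigma}\big(\alpha(c)\otimes\mu_{\sigma}(d\otimes e)\big)$, again nine terms, resolved by the same set of identities.

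Then I would subtract the two expansions. Several families of terms occur identically on both sides and cancel — the single-$\sigma$ terms, and the $\lambda^{2}$- and $\varepsilon$-only terms; these are the exact duals of the $r_{12}r_{23}$-type terms that already appear on both sides in the proof of Theorem~\ref{thm:14.5}. After these cancellations, the right-hand side contributes a multiple of $\alpha^{2}\omega^{-1}(c_{1})$ whose coefficient is the scalar on the left of \eqref{eq:01.05}, and the left-hand side contributes a multiple of $\beta^{2}\psi^{-1}(e_{2})$ whose coefficient is the scalar on the right of \eqref{eq:01.05}; their equality is exactly \eqref{eq:01.05}. For $\widetilde{\mu_{\sigma}}$ one repeats the computation with $-\lambda\alpha(c)\varepsilon(d)$ in \eqref{eq:01.04} replaced by $-\lambda\varepsilon(c)\beta(d)$; this affects only the two $\varepsilon$-terms, which is precisely why $\lambda$ is replaced by $-\lambda$ at the two places marked ``(resp.\ $(-\lambda)$)'' in \eqref{eq:01.05}.

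The only real obstacle is the bookkeeping of the roughly eighteen terms and the correct application, term by term, of coassociativity, the counit axioms, and the invariance of $\sigma$; there is no conceptual point beyond what is already handled in Theorem~\ref{thm:14.5}. Alternatively, in the finite-dimensional case one could deduce the statement from Theorem~\ref{thm:14.5} together with the duality Theorem~\ref{thm:12.16}, and then observe that \eqref{eq:01.05} involves only finitely many structure constants at a time; carrying out the direct computation sketched above, however, dispenses with any dimension hypothesis.
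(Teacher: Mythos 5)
Your proposal is correct and follows essentially the same route as the paper: the paper's proof likewise expands $\alpha(c)(de)$ and $(cd)\beta(e)$ term by term using Eqs.~(\ref{eq:1.9}), (\ref{eq:1.11}) and the invariance of $\sigma$, cancels the common terms, and reads off Eq.~(\ref{eq:01.05}) from the surviving multiples of $\alpha^{2}\omega^{-1}(c_{1})$ and $\beta^{2}\psi^{-1}(e_{2})$ (noting only that both expansions contribute to both groups, a minor bookkeeping point).
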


 \begin{proof} For all $c,d,e\in C$, based on Eqs.(\mref{eq:1.9}) and (\mref{eq:1.11}), one can get
 \begin{eqnarray*}
 \alpha(c)(de)\hspace{-3mm}
 &=&\hspace{-3mm}\alpha^{2}\omega^{-1}(c_{1})\sigma(d_{2},\psi(e))
 \sigma(\alpha(c_{2}),\alpha\omega^{-1}
 \psi(d_{1}))-\alpha\beta\psi^{-1}\omega^{-1}(d_{21})\sigma(\psi^{-1}(d_{22}),\psi(e))
 \sigma(\omega(c),d_{1})\\
 \hspace{-3mm}&&\hspace{-3mm}-\alpha^{2}\omega^{-1}(c_{1})\sigma(\omega(d),e_{1})\sigma(\alpha(c_{2}),\beta(e_{2}))
 +\sigma(\omega(d),\omega^{-1}(e_{11}))
 \sigma(\omega\alpha(c),\beta\psi^{-1}(e_{12}))\beta^{2}\psi^{-1}(e_{2})\\
 \hspace{-3mm}&&\hspace{-3mm}-\lambda\varepsilon(e)\alpha^{2}\omega^{-1}(c_{1})\sigma(c_{2},\psi(d))
 +\lambda\sigma(\omega(c),d_{1})\varepsilon(e)\alpha\beta\psi^{-1}(d_{2})
 +\lambda^{2}\alpha^{2}(c)\varepsilon(d)\varepsilon(e)
 \end{eqnarray*}
 and
 \begin{eqnarray*}
 (cd)\beta(e)
 \hspace{-3mm}&=&\hspace{-3mm}\alpha^{2}\omega^{-1}(c_{1})\sigma(\alpha\omega^{-1}(c_{21}),\psi\beta(e))
 \sigma(\psi^{-1}(c_{22}),\psi(d))-\sigma(c_{2},\psi(d))\sigma(\alpha(c_{1}),\beta(e_{1}))\beta^{2}\psi^{-1}(e_{2})\\
 \hspace{-3mm}&&\hspace{-3mm}-\lambda\alpha^{2}\omega^{-1}(c_{1})\varepsilon(e)\sigma(c_{2},\psi(d))-\alpha\beta\psi^{-1}\omega^{-1}(d_{21})\sigma(\psi^{-1}(d_{22}),\psi(e))
 \sigma(\omega(c),d_{1})\\
 &&+\sigma(\omega(c),d_{1})\sigma(\omega\psi^{-1}(d_{2}),e_{1})\beta^{2}\psi^{-1}(e_{2})+\lambda\alpha\beta\psi^{-1}(d_{2})\varepsilon(e)\sigma(\omega(c),d_{1})\\
 \hspace{-3mm}&&\hspace{-3mm}-\lambda\alpha^{2}\omega^{-1}(c_{1})\varepsilon(d)\sigma(\alpha(c_{2}),\psi\beta(e)) +\lambda\varepsilon(d)\sigma(\omega\alpha(c),\beta(e))\beta^{2}\psi^{-1}(e_{2})
 +\lambda^{2}\alpha^{2}(c)\varepsilon(d)\varepsilon(e).
 \end{eqnarray*}
 Therefore we can finish the proof.
 \end{proof}

 Now we introduce the nonhomogeneous type of coassociative BiHom-Yang-Baxter equation.

 \begin{defi}\mlabel{de:12.03} Let $(C,\D,\v,\psi,\omega)$ be a counitary BiHom-coassociative coalgebra, $\alpha,\beta: C\longrightarrow C$ be linear maps and $\sigma \in (C\otimes C)^{\ast}$. We call
 \begin{eqnarray}
 &\sigma (\alpha(c_{1}),\beta\omega (e))\sigma (c_{2},\psi (d))-\sigma (\omega(c),d_{1})\sigma (d_{2},\psi (e))\qquad\qquad\qquad\qquad\qquad&\nonumber\\
 &\qquad\qquad\qquad+\sigma(\omega(d),e_{1})\sigma (\alpha\psi(c),\beta(e_{2}))=\lambda(\hbox{resp.}\ (-\l))\sigma(\alpha(c),\beta(e))\varepsilon(d)\qquad&\mlabel{eq:01.03}
 \end{eqnarray}
 the {\bf $\lambda$(\hbox{resp.}\ ($-\l$))-coassociative BiHom-Yang-Baxter equation (abbr. $\lambda$(\hbox{resp.}\ ($-\l$))-coabhYBe) in $(C,\D,\v,\psi,\omega)$} where $\lambda$ is a given element in $K$.
 \end{defi}

 By Lemma \mref{lem:12.04} and Theorem \mref{thm:12.05}, we have

 \begin{cor}\mlabel{cor:13.003} Let $(C,\Delta,\varepsilon,\psi,\omega)$ be a counitary BiHom-coassociative coalgebra such that $\psi,\omega$ are bijective, $\alpha,\beta: A\longrightarrow A$ be linear maps, $\sigma\in (C\otimes C)^{\ast}$ be $\alpha,\beta,\psi,\omega$-invariant and moreover Eqs.(\mref{eq:12.1}), (\mref{eq:12.3}) and (\mref{eq:12.31}) hold. If $\sigma$ is a solution of the $\lambda$(\hbox{resp.}\ ($-\l$))-coabhYBe, then $(C, \mu=\mu_{\sigma} (\hbox{resp.}\  \widetilde{\mu_{\sigma}}),\Delta, \varepsilon,\alpha,\beta,\psi,\omega)$ is a $\lambda$-infBH-bialgebra, where $\mu_{\sigma}$(\hbox{resp.}\  $\widetilde{\mu_{\sigma}}$) is defined by Eq.(\mref{eq:01.04})(\hbox{resp.}\  Eq.(\mref{eq:01.08})).
 \end{cor}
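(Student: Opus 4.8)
The plan is to deduce the corollary from Lemma \mref{lem:12.04} and Theorem \mref{thm:12.05}, reducing the whole statement to a single associativity check. First I would record that, by Lemma \mref{lem:12.04}, $\mu_{\sigma}$ (resp. $\widetilde{\mu_{\sigma}}$) is a $\lambda$-BiHom-coderivation of the counitary BiHom-coassociative coalgebra $(C,\Delta,\varepsilon,\psi,\omega)$. Reading Eq.(\mref{eq:12.11}) with $\vartheta=\mu_{\sigma}$ already gives that $\mu_{\sigma}$ commutes suitably with $\alpha,\beta,\psi,\omega$ — in particular Eqs.(\mref{eq:12.3}) hold and $\alpha,\beta$ are multiplicative for $\mu_{\sigma}$ — while Eq.(\mref{eq:12.12}) with $\vartheta=\mu_{\sigma}$ is literally the compatibility condition (\mref{eq:12.4}). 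Since Eqs.(\mref{eq:12.1}), (\mref{eq:12.2}), (\mref{eq:12.31}) are standing hypotheses and $(C,\Delta,\varepsilon,\psi,\omega)$ is assumed to be a counitary BiHom-coassociative coalgebra, the only item of Definition \mref{de:12.1} still to be verified is that $(C,\mu_{\sigma},\alpha,\beta)$ is a BiHom-associative algebra, i.e. that $\mu_{\sigma}$ satisfies the BiHom-associativity law (\mref{eq:1.3}).

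Next I would invoke Theorem \mref{thm:12.05}: the $\lambda$-BiHom-coderivation $\mu_{\sigma}$ is BiHom-associative precisely when Eq.(\mref{eq:01.05}) holds (with $\lambda$ in the $\mu_{\sigma}$ case, with $-\lambda$ in the $\widetilde{\mu_{\sigma}}$ case). Inspecting the structure of (\mref{eq:01.05}), its left-hand side is $\alpha^{2}\omega^{-1}(c_{1})$ times a scalar $S_{1}$ assembled only from $\sigma$ evaluated on the Sweedler components of $c_{2},d,e$, and its right-hand side is a scalar $S_{2}$, assembled only from $\sigma$ evaluated on the Sweedler components of $c,d,e_{1}$, times $\beta^{2}\psi^{-1}(e_{2})$. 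So the whole identity will follow once I show that both scalar cofactors vanish on a solution of the coabhYBe.

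The heart of the argument — and the step I expect to be the main obstacle — is to recognize $S_{1}$ and $S_{2}$ as genuine instances of the $\lambda$(resp. $(-\lambda)$)-coabhYBe (\mref{eq:01.03}). To do this I would redistribute the structure maps across the bilinear form using $\sigma$-invariance $\sigma\circ(F\otimes F)=\sigma$ for $F$ a composite of $\alpha^{\pm1},\beta^{\pm1},\psi^{\pm1},\omega^{\pm1}$, use the counit identities $\varepsilon\circ\psi=\varepsilon=\varepsilon\circ\omega$, and repackage the iterated coproducts by means of BiHom-coassociativity (\mref{eq:1.9}) and the counit axioms (\mref{eq:1.11}). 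I expect this bookkeeping to produce exactly that $S_{1}$ is the $\lambda$-coabhYBe expression evaluated at $(c_{2},\psi(d),\psi(e))$ and that $S_{2}$ is the $\lambda$-coabhYBe expression evaluated at $(\omega(c),\omega(d),e_{1})$; hence if $\sigma$ solves the $\lambda$-coabhYBe then $S_{1}=S_{2}=0$, both sides of (\mref{eq:01.05}) vanish, and BiHom-associativity of $\mu_{\sigma}$ holds trivially. Combined with the first paragraph this shows $(C,\mu_{\sigma},\Delta,\varepsilon,\alpha,\beta,\psi,\omega)$ is a (counitary) $\lambda$-infBH-bialgebra. For $\widetilde{\mu_{\sigma}}$ the same steps go through verbatim with $\lambda$ replaced by $-\lambda$ throughout and the $(-\lambda)$-coabhYBe in place of the $\lambda$-coabhYBe; there the only thing requiring care is tracking the sign of the weight-dependent terms, exactly as in the module-side passage from (\mref{eq:coboundary}) to (\mref{eq:coboundary1}).
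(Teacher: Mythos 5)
Your proposal is correct and follows exactly the route the paper takes: the paper derives the corollary in one line from Lemma \mref{lem:12.04} (which gives the $\lambda$-BiHom-coderivation property, hence Eq.(\mref{eq:12.4}) and the multiplicativity conditions) and Theorem \mref{thm:12.05} (which reduces BiHom-associativity of $\mu_{\sigma}$ to Eq.(\mref{eq:01.05})). The only step the paper leaves implicit — that the two scalar cofactors in Eq.(\mref{eq:01.05}) are, after redistributing structure maps by $\sigma$-invariance, instances of the coabhYBe and hence vanish — is precisely what you supply, and your identified substitutions are the right ones.
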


 \begin{defi}\mlabel{de:13.004} Under the assumption of Corollary \mref{cor:13.003}, a {\bf coquasitriangular (resp.\ anti-coquasi triangular) counitary $\l$-infBH-bialgebra} is a 8-tuple $(C, \Delta, \varepsilon, \alpha, \beta, \psi, \omega, \sigma)$ consisting of a counitary BiHom-coassociative coalgebra $(C, \Delta, \varepsilon, \psi, \omega)$ and a solution $\sigma\in (C\otimes C)^{\ast}$ of a $\l$(\hbox{resp.}\ $(-\l)$)-coabhYBe.
 \end{defi}

 \begin{pro}\mlabel{pro:12.08} Under the assumption of Corollary \mref{cor:13.003}, $(C, \mu=\mu_{\sigma}(\hbox{resp.}\  \widetilde{\mu_{\sigma}}), \Delta, \varepsilon, \alpha, \beta, \psi, \omega)$, where $\mu_{\sigma}(\hbox{resp.}\  \widetilde{\mu_{\sigma}})$ is defined by Eq.(\mref{eq:01.04})(\hbox{resp.}\  Eq.(\mref{eq:01.08})), is a coquasitriangular (\hbox{resp.}\  anti-coquasitriangular) counitary $\l$-infBH-bialgebra if and only if
 \begin{eqnarray}
 &\sigma(cd,\beta(e))=-\sigma(\omega(d),e_{1})\sigma(\alpha\psi(c),\beta(e_{2}))& \mlabel{eq:01.06}
 \end{eqnarray}
 or
 \begin{eqnarray}
 &\sigma(\alpha(c),de)=\sigma(\alpha(c_{1}),\beta\omega(e))\sigma(c_{2},\psi(d))
  -\lambda\sigma(\alpha(c),\beta(e))\varepsilon(d)-\lambda\sigma(c,d)\varepsilon(e)& \mlabel{eq:01.07}
 \end{eqnarray}
 (\hbox{resp.}
  \begin{eqnarray}
 &\sigma(cd,\beta(e))=-\sigma(\omega(d),e_{1})\sigma(\alpha\psi(c),\beta(e_{2}))
 -\lambda\sigma(\alpha(c),\beta(e))\varepsilon(d)-\lambda\sigma(d,e)\varepsilon(c)& \mlabel{eq:01.10}
 \end{eqnarray}
 or
 \begin{eqnarray}
  &\sigma(\alpha(c),de)=\sigma(\alpha(c_{1}),\beta\omega(e))\sigma(c_{2},\psi(d))& \mlabel{eq:01.11})
 \end{eqnarray}
 hold for all $c, d, e\in C$.
 \end{pro}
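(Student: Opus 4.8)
The plan is to mirror the proof of Proposition \mref{pro:14.8}, exploiting the duality between the multiplication $\mu_{\sigma}$ and the comultiplication $\Delta_{r}$. By Definition \mref{de:13.004} and Corollary \mref{cor:13.003}, the $8$-tuple $(C,\mu_{\sigma},\Delta,\varepsilon,\alpha,\beta,\psi,\omega)$ (resp. the one built from $\widetilde{\mu_{\sigma}}$) is a coquasitriangular (resp. anti-coquasitriangular) counitary $\l$-infBH-bialgebra precisely when $\sigma$ solves the $\l$-coabhYBe \meqref{eq:01.03} (resp. the $(-\l)$-coabhYBe); since $\mu_{\sigma}$ is already a $\l$-BiHom-coderivation by Lemma \mref{lem:12.04}, the content of Theorem \mref{thm:12.05} is subsumed by Corollary \mref{cor:13.003} and need not be revisited. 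Hence it suffices to show that \meqref{eq:01.03} is equivalent to each of \meqref{eq:01.06} and \meqref{eq:01.07} (resp. that the $(-\l)$-coabhYBe is equivalent to each of \meqref{eq:01.10} and \meqref{eq:01.11}).

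First I would evaluate $\sigma(cd,\beta(e))=\sigma(\mu_{\sigma}(c\otimes d),\beta(e))$ using \meqref{eq:01.04} and the $\alpha,\beta,\psi,\omega$-invariance of $\sigma$ to absorb the twist maps, via $\sigma(\alpha\omega^{-1}(c_{1}),\beta(e))=\sigma(\alpha(c_{1}),\beta\omega(e))$ and $\sigma(\beta\psi^{-1}(d_{2}),\beta(e))=\sigma(d_{2},\psi(e))$. This yields
$$\sigma(cd,\beta(e))=\sigma(\alpha(c_{1}),\beta\omega(e))\sigma(c_{2},\psi(d))-\sigma(\omega(c),d_{1})\sigma(d_{2},\psi(e))-\l\,\sigma(\alpha(c),\beta(e))\varepsilon(d).$$
The three terms on the right are, respectively, the first summand of the left side of \meqref{eq:01.03}, the negative of its second summand, and the negative of its right side; hence \meqref{eq:01.03} holds for all $c,d,e$ if and only if $\sigma(cd,\beta(e))=-\sigma(\omega(d),e_{1})\sigma(\alpha\psi(c),\beta(e_{2}))$, which is \meqref{eq:01.06}.

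Next I would evaluate $\sigma(\alpha(c),de)=\sigma(\alpha(c),\mu_{\sigma}(d\otimes e))$ in the same way. The key simplification is $\sigma(\alpha(c),\alpha\omega^{-1}(d_{1}))=\sigma(\alpha\omega(c),\alpha(d_{1}))=\sigma(\omega(c),d_{1})$ --- apply $\omega$ and then cancel a common $\alpha$ --- together with $\sigma(\alpha(c),\beta\psi^{-1}(e_{2}))=\sigma(\alpha\psi(c),\beta(e_{2}))$ and $\sigma(\alpha(c),\alpha(d))=\sigma(c,d)$; this produces
$$\sigma(\alpha(c),de)=\sigma(\omega(c),d_{1})\sigma(d_{2},\psi(e))-\sigma(\omega(d),e_{1})\sigma(\alpha\psi(c),\beta(e_{2}))-\l\,\sigma(c,d)\varepsilon(e).$$
Replacing $\sigma(\omega(c),d_{1})\sigma(d_{2},\psi(e))-\sigma(\omega(d),e_{1})\sigma(\alpha\psi(c),\beta(e_{2}))$ by the value imposed by \meqref{eq:01.03} turns this into \meqref{eq:01.07}, and reading the computation backwards recovers \meqref{eq:01.03}, so the two are equivalent. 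The anti-coquasitriangular case is handled identically, the only change being that $\widetilde{\mu_{\sigma}}$ replaces the last term $-\l\alpha(c)\varepsilon(d)$ of $\mu_{\sigma}$ by $-\l\varepsilon(c)\beta(d)$; running the same two evaluations against the $(-\l)$-coabhYBe then yields \meqref{eq:01.10} and \meqref{eq:01.11} with the $\l$-correction terms redistributed accordingly.

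I expect the only real difficulty to be careful bookkeeping: threading the commuting maps $\alpha,\beta,\psi,\omega$ and the inverses $\psi^{-1},\omega^{-1}$ through each pairing, applying the invariance $\sigma\circ(F\otimes F)=\sigma$ in the correct direction --- in particular the non-obvious cancellation $\sigma(\alpha\omega(c),\alpha(d_{1}))=\sigma(\omega(c),d_{1})$ --- and, in the anti-case, tracking which $\l$-term is carried by $\varepsilon(c)$, $\varepsilon(d)$ or $\varepsilon(e)$.
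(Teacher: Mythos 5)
Your proposal is correct and follows essentially the same route as the paper: the paper's own proof consists precisely of computing $\sigma(cd,\beta(e))$ and $\sigma(\alpha(c),de)$ from Eq.~(\mref{eq:01.04}) using the $\alpha,\beta,\psi,\omega$-invariance of $\sigma$, arriving at exactly the two intermediate identities you display, and then comparing with the $\l$-coabhYBe. Your handling of the invariance bookkeeping and of the anti-coquasitriangular case matches what the paper leaves as ``obvious,'' so there is nothing to add.
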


 \begin{proof} We only sketch the proof of the coquasitriangular case as follows. By the definition of $\mu$ in Eq.(\mref{eq:01.04}) and invariant condition for $\s$, one easily checks that
 \begin{eqnarray*}
 \sigma(cd,\beta(e))\sigma(\alpha(c_{1}),\beta\omega(e))\sigma(c_{2},\psi(d))
 -\sigma(\omega(c),d_{1})\sigma(d_{2},\psi(e))-\lambda\sigma(\alpha(c),\beta(e))\varepsilon(d)
 \end{eqnarray*}
 and
 \begin{eqnarray*}
 \sigma(\alpha(c),de)=\sigma(\omega(c),d_{1})\sigma(d_{2},\psi(e))
 -\sigma(\omega(d),e_{1})\sigma(\alpha\psi(c), \beta(e_{2}))-\lambda\varepsilon(e)\sigma(c,d).
 \end{eqnarray*}
 The rest is obvious.
 \end{proof}

 The following theorem provides  constructions of $\lambda$-infBH-Hopf modules from  comodules of (anti-)coquasitriangular counitary $\lambda$-infBH-bialgebras.

 \begin{thm}\mlabel{thm:12.013} Let $(C, \Delta, \varepsilon, \alpha_{C}, \beta_{C}, \psi_{C}, \omega_{C}, \sigma)$ be a coquasitriangular (resp.\ anti-coquasitriangular) counitary $\lambda$-infBH-bialgebra and $(M,\rho,\psi_{M},\omega_{M})$ (resp.\ $(M,\widetilde{\rho},\psi_{M},\omega_{M})$) be a left $(C,\Delta,\psi_{C},\omega_{C})$-comodule, $\alpha_{M},\beta_{M}: M\longrightarrow M$ be linear maps such that $\beta_{M}\ci \psi_{M}=\psi_{M}\ci \beta_{M}$, $\rho\ci \beta_{M}=(\beta_{A}\otimes \beta_{M})\ci \rho$ (resp.\ $\widetilde{\rho}\ci \beta_{M}=(\beta_{A}\otimes \beta_{M})\ci \widetilde{\rho}$). Then $(M, \gamma, \rho, \alpha_{M}, \beta_{M}, \psi_{M}, \omega_{M})$ (resp.\ $(M,\widetilde{\gamma},\widetilde{\rho},\alpha_{M},\beta_{M},\psi_{M},\omega_{M})$) becomes a $\lambda$-infBH-Hopf module with an  action $\gamma: C\otimes M\longrightarrow M$ given by
 \begin{eqnarray*}
 &\gamma(c\otimes m):=-\sigma(\omega_{C}(c),m_{-1})\beta_{M}\psi_{M}^{-1}(m_{0})&
 \end{eqnarray*}
 (\hbox{resp.} \begin{eqnarray*}
 &\widetilde{\gamma}(c\otimes m):=-\sigma(\omega_{C}(c),m_{-1})\beta_{M}\psi_{M}^{-1}(m_{0})
 -\lambda\varepsilon(c)\beta_{M}(m) &)
 \end{eqnarray*}
 for all $c\in C, m\in M$.
 \end{thm}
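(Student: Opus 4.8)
The plan is to mirror, in the comodule/dual setting, the argument given for Theorem~\ref{thm:12.02} (and its anti-version Theorem~\ref{thm:12.02a}), exploiting the self-duality of $\l$-infBH-bialgebras recorded in Theorem~\ref{thm:12.16} and Proposition~\ref{pro:12.15}. We must produce, on a left $(C,\D,\psi_C,\omega_C)$-comodule $(M,\rho,\psi_M,\omega_M)$, a left $(C,\mu_\sigma,\alpha_C,\beta_C)$-module structure $\gamma$ (the formula is dictated by dualizing Eq.~(\ref{eq:modulefromqt})) and then verify the compatibility Eq.~(\ref{eq:12.13}). Concretely I would carry out three blocks of verification: first, the BiHom-module axioms (\ref{eq:1.13}) and (\ref{eq:1.15}) for $\gamma$; second, the $\l$-infBH-Hopf module compatibility (\ref{eq:12.13}); and in the anti-coquasitriangular case the analogous computations for $\widetilde\gamma$ with the extra $-\lambda\varepsilon(c)\beta_M(m)$ term.

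For the module axioms, the commutation of $\gamma$ with $\alpha_M,\beta_M,\psi_M,\omega_M$ follows immediately from the $\alpha,\beta,\psi,\omega$-invariance of $\sigma$, the comodule compatibilities relating $\rho$ to $\psi_M,\omega_M$, and the hypothesis $\rho\circ\beta_M=(\beta_C\otimes\beta_M)\circ\rho$ together with $\beta_M\circ\psi_M=\psi_M\circ\beta_M$; this is routine. The associativity axiom $\alpha_C(c)\tl(d\tl m)=(cd)\tl\beta_M(m)$ is where the equation Eq.~(\ref{eq:01.06}) (equivalently Eq.~(\ref{eq:01.07})), which characterizes the coquasitriangular structure by Proposition~\ref{pro:12.08}, must be invoked: expanding $d\tl m=-\sigma(\omega_C(d),m_{-1})\beta_M\psi_M^{-1}(m_0)$ and then applying $\gamma$ again produces a term $\sigma(\omega_C(c),(d\tl m)_{-1})$ which, after using the comodule coassociativity $(\Delta\otimes\psi_M)\circ\rho=(\omega_C\otimes\rho)\circ\rho$ to rewrite $(d\tl m)_{-1}$, reorganizes into $\sigma(cd,\beta_C(e))$-type expressions; matching this against $(cd)\tl\beta_M(m)$ is precisely Eq.~(\ref{eq:01.06}). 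For the compatibility (\ref{eq:12.13}), one substitutes the explicit $\mu_\sigma$ from Eq.~(\ref{eq:01.04}) on the left side $(c\tl m)_{-1}\otimes(c\tl m)_0$, computes using $\Delta_C$ and comodule coassociativity, and observes that the three terms on the right side of (\ref{eq:12.13})—built from $\omega_C(c)m_{-1}\otimes\beta_M(m_0)$, $\alpha_C(c_1)\otimes c_2\tl\psi_M(m)$, and the weight term $\lambda\alpha_C\omega_C(c)\otimes\beta_M\psi_M(m)$—collapse, after the $\sigma$-invariance and counit relations (\ref{eq:1.11}) are applied, to exactly $(c\tl m)_{-1}\otimes(c\tl m)_0$; this is the dual of the calculation displayed at the end of the proof of Theorem~\ref{thm:12.02}.

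The anti-coquasitriangular case is handled in parallel: with $\widetilde\mu_\sigma$ from Eq.~(\ref{eq:01.08}) and $\widetilde\gamma$ carrying the extra $-\lambda\varepsilon(c)\beta_M(m)$, the module axioms and (\ref{eq:12.13}) are checked the same way, now using Eq.~(\ref{eq:01.10}) (or Eq.~(\ref{eq:01.11})) in place of Eq.~(\ref{eq:01.06}); the weight-$\lambda$ bookkeeping is exactly the shift $\l\mapsto-\l$ that already appears throughout Section~\ref{se:module}, and the $\varepsilon$-terms are absorbed using $\varepsilon\circ\alpha=\varepsilon\circ\beta=\varepsilon$ (Eq.~(\ref{eq:12.31})) and $\varepsilon(cd)=-\lambda\varepsilon(c)\varepsilon(d)$ from the dual of Lemma~\ref{lem:12.4}. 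I expect the main obstacle to be purely organizational rather than conceptual: keeping the Sweedler indices on $\Delta_C(c)$, the comodule coaction $\rho(m)=m_{-1}\otimes m_0$, and the two copies of $\sigma$ aligned correctly while repeatedly reindexing via (\ref{eq:1.9}) and the comodule coassociativity, so that the cancellations of the $\lambda$- and $\varepsilon$-terms are manifest. Since the statement itself says ``we sketch the proof for the convenience of reading,'' I would present the comodule-coassociativity check for $\rho$ under the new coaction and the compatibility computation in the coquasitriangular case in full, and then simply indicate that the anti-case follows by the substitution $\l\mapsto-\l$ together with the identities just listed, as desired.
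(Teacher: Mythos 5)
Your proposal matches the paper's proof: the paper likewise verifies the module associativity for $\gamma$ by invoking Eq.~(\mref{eq:01.06}) (the characterization from Proposition~\mref{pro:12.08}), checks the compatibility Eq.~(\mref{eq:12.13}) by expanding the product $\omega_C(c)m_{-1}$ via the definition of $\mu_\sigma$ in Eq.~(\mref{eq:01.04}) together with comodule coassociativity, and treats the anti-coquasitriangular case as a parallel computation. The only slip is your closing reference to a ``comodule-coassociativity check for $\rho$ under the new coaction''---here $\rho$ is given and only the action $\gamma$ is new---but since your second paragraph lists the correct verifications, this is immaterial.
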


 \begin{proof} We first prove that $(M, \gamma, \alpha_{M}, \beta_{M})$ is a left $(C, \mu, \a_C, \b_C)$-module. For all $c, d\in C$ and $m\in M$, we have
 \begin{eqnarray*}
 \gamma(\mu_{\sigma}\otimes\beta_{M})(c\otimes d\otimes m)
 &\stackrel{(\mref{eq:01.06})}=&\sigma(\omega_{C}^{2}(d),m_{-11})
 \sigma(\alpha_{C}\psi_{C}\omega_{C}(c),\beta_{C}(m_{-12}))
 \beta_{M}^{2}\psi_{M}^{-1}(m_{0})\\
 &=&\gamma(\alpha_{C}\otimes \gamma)(c\otimes d\otimes m).
 \end{eqnarray*}
 Then in the rest we check the compatibility condition for $\lambda$-infBH-Hopf module.
 \begin{eqnarray*}
 &&\hspace{-20mm}\omega_{C}(c)m_{-1}\otimes\beta_{M}(m_{0})+\alpha_{C}(c_{1})\otimes c_{2}\triangleright\psi_{M}(m)+\lambda\alpha_{C}\omega_{C}(c)\otimes\beta_{M}\psi_{M}(m)\\
 &\stackrel{(\mref{eq:01.04})}=&-\sigma(\omega_{C}^{2}(c),\omega_{C}(m_{-1}))\beta_{C}\psi_{C}^{-1}(m_{0-1})\otimes \beta_{M}\psi_{M}^{-1}(m_{00})\\
 &=&(c\triangleright m)_{-1}\otimes (c\triangleright m)_{0},
 \end{eqnarray*}
 completing the proof.
 \end{proof}

\subsection{BiHom-pre-Lie coalgebras from $\l$-infBH-bialgebras}
 In this subsection, we provide two approaches to construct BiHom-pre-Lie coalgebras from $\l$-infBH-bialgebras, one of which can also provide a commutative diagram corresponding to the one in Sec. \ref{se:subprelie}.

 \begin{defi}\mlabel{de:C01.07}(\cite[Definition 3.18]{MLiY}) {\bf A (left) BiHom-pre-Lie coalgebra} is a 4-tuple $(C,\Delta,\psi,\omega)$ where $C$ is a linear space and $\Delta: C \rightarrow C\otimes C$ (write $\D(c)=c_{[1]}\o c_{[2]}$), $\psi, \omega: C \rightarrow C$ are linear maps satisfying
 \begin{eqnarray*}
 &\psi \circ \omega=\omega \circ \psi,~\Delta \circ \psi=(\psi\otimes \psi)\circ\Delta,~ \Delta \circ \omega=(\omega\otimes \omega)\circ\Delta,&\mlabel{eq:C01.013a}\\
 &\bar{\Delta}-\Phi _{(12)}\bar{\Delta}=0,& \mlabel{eq:C01.013}
 \end{eqnarray*}
 where
 $\bar{\Delta}(c)=\omega \psi(c_{[1]}) \otimes \omega(c_{[2][1]}) \otimes c_{[2][2]} - \psi(c_{[1][1]}) \otimes \omega(c_{[1][2]}) \otimes \psi(c_{[2]})$ and $\Phi _{(12)}(a\otimes b \otimes c)=b\otimes a \otimes c$.
 \end{defi}

 \begin{thm}\mlabel{thm:co13.3} Let $(C,\mu,\Delta,\alpha,\beta,\psi,\omega)$ be a $\l$-infBH-bialgebra such that $\alpha,\beta,\psi,\omega$ are invertible. Then $(C,\Delta_{\star},\psi,\omega)$ is a BiHom-pre-Lie coalgebra, where
 \begin{eqnarray*}
 &\Delta_{\star}:C\longrightarrow C\otimes C,\ \Delta_{\star}(c)=\psi^{-1}(c_{12})\otimes\alpha^{-1}\psi\omega^{-2}(c_{11})\beta^{-1}(c_{2}).&\mlabel{eq:co13.2}
 \end{eqnarray*}
 \end{thm}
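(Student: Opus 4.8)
The plan is to dualize the proof of Theorem \ref{thm:13.3} in the strictest possible sense: Theorem \ref{thm:13.3} produces, from a $\lambda$-infBH-bialgebra, a BiHom-pre-Lie multiplication $a\star b=(\alpha^{-2}\beta\omega^{-1}(b_1)\beta^{-1}(a))\psi^{-1}(b_2)$, and the comultiplication $\Delta_\star(c)=\psi^{-1}(c_{12})\otimes \alpha^{-1}\psi\omega^{-2}(c_{11})\beta^{-1}(c_2)$ is precisely the formula obtained from $\star$ by reading each tensor factor backwards and swapping the roles of $\mu\leftrightarrow\Delta$, $\alpha\leftrightarrow\omega$, $\beta\leftrightarrow\psi$ (the self-duality encoded in Theorem \ref{thm:12.16} and Proposition \ref{pro:12.15}). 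So first I would verify the ``easy'' axioms of Definition \ref{de:C01.07}: that $\psi\circ\omega=\omega\circ\psi$ is inherited directly, and that $\Delta_\star\circ\psi=(\psi\otimes\psi)\circ\Delta_\star$, $\Delta_\star\circ\omega=(\omega\otimes\omega)\circ\Delta_\star$ follow by applying Eqs.(\ref{eq:12.1}), (\ref{eq:12.2}), (\ref{eq:12.3}) term by term, exactly as the multiplicativity of $\star$ under $\alpha,\beta$ was checked (implicitly) in Theorem \ref{thm:13.3}. These are bookkeeping with the commuting structure maps and the compatibilities $(\psi\otimes\psi)\circ\Delta=\Delta\circ\psi$ etc.

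The heart of the matter is the co-left-symmetry identity $\bar\Delta-\Phi_{(12)}\bar\Delta=0$ where $\bar\Delta(c)=\omega\psi(c_{[1]})\otimes\omega(c_{[2][1]})\otimes c_{[2][2]}-\psi(c_{[1][1]})\otimes\omega(c_{[1][2]})\otimes\psi(c_{[2]})$ with $c_{[1]}\otimes c_{[2]}=\Delta_\star(c)$. I would compute $(\Delta_\star\otimes\mathrm{id})\circ\Delta_\star(c)$ and $(\mathrm{id}\otimes\Delta_\star)\circ\Delta_\star(c)$ separately, in each case substituting $\Delta_\star$ and then expanding the inner $\Delta$ using the bialgebra compatibility Eq.(\ref{eq:12.4}) in the form $\Delta(xy)=\omega(x)y_1\otimes\beta(y_2)+\alpha(x_1)\otimes x_2\psi(y)+\lambda\,\alpha\omega(x)\otimes\beta\psi(y)$, together with BiHom-coassociativity Eq.(\ref{eq:1.9}) and BiHom-associativity Eq.(\ref{eq:1.3}) to reorganize the iterated products. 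This is the exact dual of the three-step computation in the proof of Theorem \ref{thm:13.3}, where one computed $\Delta(\alpha(b)\star c)$, then $\alpha\beta(a)\star(\alpha(b)\star c)$ and $(\beta(a)\star\alpha(b))\star\beta(c)$, and read off that the difference was symmetric in $a,b$. Here the symmetry to be exhibited is precisely invariance under $\Phi_{(12)}$ (swap of the first two tensor slots), and the $\lambda$-terms will reorganize into matching pairs in the same way the $\lambda$-terms did in Theorem \ref{thm:13.3} (the two $\lambda$-summands there, $\lambda((\cdots a)\alpha(b))\psi^{-1}\beta(c_2)$ and $\lambda((\cdots b)\alpha(a))\psi^{-1}\beta(c_2)$, are manifestly swapped by exchanging $a$ and $b$).

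The main obstacle I anticipate is purely computational stamina: correctly tracking the subscripts of the iterated coproduct (the $c_{11}, c_{12}, c_{2}$ and their further splittings) through two applications of $\Delta_\star$, and making sure the powers of $\alpha,\beta,\psi,\omega$ — all of which must be inverted, hence the invertibility hypothesis — line up so that after applying Eqs.(\ref{eq:1.2}), (\ref{eq:1.3}), (\ref{eq:1.9}) the two sides differ only by the $\Phi_{(12)}$-swap. Since the statement explicitly mirrors Theorem \ref{thm:13.3} under the finite-dimensional self-duality of Theorem \ref{thm:12.16} (or equivalently the dual bookkeeping of Proposition \ref{pro:12.15}), one legitimate shortcut is to phrase the argument as: $\Delta_\star$ is obtained from $\star$ by the formal dualization procedure, the axiom $\bar\Delta-\Phi_{(12)}\bar\Delta=0$ is the formal dual of Eq.(\ref{eq:13.1}), and therefore the identity follows from Theorem \ref{thm:13.3} applied to the (co)opposite structure; but I would still include the direct verification for readers who want the explicit Sweedler computation, since, as the authors note after Theorem \ref{thm:13.3}, the proof is ``similar'' and they there omit it, so symmetry of exposition suggests doing the same here.
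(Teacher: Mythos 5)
Your plan matches the paper's proof: the paper establishes the left-symmetry axiom by exactly the direct Sweedler computation you describe, first expanding $\Delta_{\star}(cd)$ via Eq.(\ref{eq:12.4}) and then reorganizing the two (twisted) iterated coproducts with Eqs.(\ref{eq:1.3}) and (\ref{eq:1.9}) until the main terms and the $\lambda$-terms pair up under $\Phi_{(12)}$, mirroring Theorem \ref{thm:13.3}. The only caveat is that your optional dualization shortcut via Theorem \ref{thm:12.16} would require finite-dimensionality, which this theorem does not assume, so the direct verification you propose to include is the one that actually carries the proof.
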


 \begin{proof} For all $c,d \in C$, we calculate
 \begin{eqnarray*}
 (\omega\otimes \id)\Delta_{\star}(cd)\hspace{-2mm}
 &\stackrel{(\mref{eq:12.2})(\mref{eq:12.4})}=&\hspace{-2mm}
 \beta\psi^{-1}\omega(d_{12})\otimes(\alpha^{-1}\psi(c)\alpha^{-1}\psi\omega^{-2}(d_{11}))d_{2}+\psi^{-1}\omega^{2}(c_{2})\omega(d_{1})\otimes\psi\omega^{-1}(c_{1})d_{2}
 \\
 &&+\lambda\beta\omega(d_{1})\otimes\psi(c)d_{2}+\alpha\psi^{-1}\omega(c_{12})\otimes\psi\omega^{-2}(c_{11})(\beta^{-1}(c_{2})\beta^{-1}\psi(d))\\
 &&+\lambda\alpha\psi^{-1}\omega^{2}(c_{2})\otimes\psi\omega^{-1}(c_{1})\psi(d),
 \end{eqnarray*}
 and
 \begin{eqnarray*}
 (\id\otimes\omega\otimes \id)(\omega\psi\otimes\Delta_{\star})\Delta_{\star}(c)\hspace{-3mm}
 &=&\hspace{-3mm}
 \omega(c_{12})\otimes\psi^{-1}\omega(c_{212})\otimes(\alpha^{-2}\psi^{2}\omega^{-2}(c_{11})
 \alpha^{-1}\beta^{-1}\psi\omega^{-2}(c_{211}))\beta^{-1}(c_{22})\\
 &&\hspace{-3mm}+\omega(c_{12})\otimes\alpha^{-1}(c_{112})\beta^{-1}\omega(c_{21})
 \otimes\alpha^{-1}\psi^{2}\omega^{-3}(c_{111})\beta^{-1}(c_{22})\\
 &&\hspace{-3mm}+\lambda\omega(c_{12})\otimes\omega(c_{21})
 \otimes\alpha^{-1}\psi^{2}\omega^{-2}(c_{11})\beta^{-1}(c_{22})\\
 &&\hspace{-3mm}+\omega(c_{12})\otimes\omega^{-1}(c_{1112})\otimes\alpha^{-1}\psi^{2}\omega^{-4}(c_{1111})
 (\alpha^{-1}\beta^{-1}\psi\omega^{-2}(c_{112})\beta^{-2}\psi(c_{2}))\\
 &&\hspace{-3mm}+\lambda\omega(c_{12})\otimes c_{112}\otimes\alpha^{-1}\psi^{2}\omega^{-3}(c_{111})\beta^{-1}\psi(c_{2}).
 \end{eqnarray*}
 Morever,
 \begin{eqnarray*}
 (\psi\otimes\omega\otimes \id)(\Delta_{\star}\otimes\psi)\Delta_{\star}(c)
 \stackrel{(\mref{eq:1.9})}=\omega(c_{12})
 \otimes\alpha^{-1}(c_{112})\beta^{-1}\omega(c_{21})
 \otimes\alpha^{-1}\psi^{2}\omega^{-3}(c_{111})\beta^{-1}(c_{22}).
 \end{eqnarray*}
 Hence,
 \begin{eqnarray*}
 \bar{\Delta}_{\star}(c)
 \hspace{-3mm}&\stackrel{(\mref{eq:1.3})(\mref{eq:1.9})}=&\hspace{-3mm}\omega(c_{12})\otimes\omega^{-1}(c_{1112})\otimes\alpha^{-1}\psi^{2}\omega^{-4}(c_{1111})
 (\alpha^{-1}\beta^{-1}\psi\omega^{-2}(c_{112})\beta^{-2}\psi(c_{2}))\\
 &&\hspace{-6mm}+\omega^{-1}(c_{1112})\otimes\omega(c_{12})\otimes \alpha^{-1}\psi^{2}\omega^{-4}(c_{1111})
 (\alpha^{-1}\beta^{-1}\psi\omega^{-2}(c_{112})\beta^{-2}\psi(c_{2}))\\
 &&\hspace{-6mm}+\lambda\omega(c_{12})\otimes c_{112}\otimes\alpha^{-1}\psi^{2}\omega^{-3}(c_{111})\beta^{-1}\psi(c_{2})+\lambda c_{112}\otimes\omega(c_{12})
 \otimes\alpha^{-1}\psi^{2}\omega^{-3}(c_{111})\beta^{-1}\psi(c_{2}).
 \end{eqnarray*}
 Thus we have $\bar{\Delta}_{\star}-\Phi _{(12)}\bar{\Delta}_{\star}=0$. The proof is completed.
 \end{proof}

 The following way does not need the condition that the structure maps $\alpha,\beta,\psi,\omega$ are invertible.

 \begin{thm}\mlabel{thm:co13.10} Let $(C,\mu,\Delta,\alpha,\beta,\psi,\omega)$ be a $\l$-infBH-bialgebra. Then $(C,\Delta_{\star}, \psi\omega^{2}, \alpha\beta\psi^{2}\omega^{2})$ is a BiHom-pre-Lie coalgebra, where
 \begin{eqnarray*}
 &\Delta_{\star}:C\longrightarrow C\otimes C,\ \Delta_{\star}(c)=\omega(c_{12})\otimes\beta\psi^{2}(c_{11})\alpha\psi\omega^{2}(c_{2}).&\mlabel{eq:co13.6}
 \end{eqnarray*}
 \end{thm}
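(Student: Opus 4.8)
The plan is to prove Theorem \ref{thm:co13.10} in exact parallel with Theorem \ref{thm:co13.3}, exploiting the fact that the dualization machinery (Theorem \ref{thm:12.16}) turns the ``$\star$'' construction of Theorem \ref{thm:13.3} into the ``$\Delta_\star$'' construction here, and turns Theorem \ref{thm:13.10} into the present statement. Concretely, the remark following Theorem \ref{thm:13.10} notes that Theorem \ref{thm:13.10} is the non-invertible-maps analogue of Theorem \ref{thm:13.3}, and its proof is stated to be ``similar'' to that of Theorem \ref{thm:13.3}; dually, Theorem \ref{thm:co13.10} stands to Theorem \ref{thm:co13.3} in the same relation, so I would first verify directly that $(C,\Delta_\star,\psi\omega^2,\alpha\beta\psi^2\omega^2)$ is well-defined, i.e. that $\psi\omega^2$ and $\alpha\beta\psi^2\omega^2$ commute and are comultiplicative with respect to $\Delta_\star$. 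These are immediate from Eqs.~(\ref{eq:1.7}), (\ref{eq:12.1}), (\ref{eq:12.2}), (\ref{eq:12.3}) together with the fact that all of $\alpha,\beta,\psi,\omega$ pairwise commute (Eqs.~(\ref{eq:1.2}), (\ref{eq:12.1})): one just pushes a tensor-factored map through $\mu$ and $\Delta$ using multiplicativity/comultiplicativity.

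The core of the proof is to establish the BiHom-pre-Lie coidentity $\bar\Delta_\star - \Phi_{(12)}\bar\Delta_\star = 0$, where $\bar\Delta_\star(c) = \Psi\Omega(c_{[1]})\otimes\Omega(c_{[2][1]})\otimes c_{[2][2]} - \Psi(c_{[1][1]})\otimes\Omega(c_{[1][2]})\otimes\Psi(c_{[2]})$ with $\Psi=\psi\omega^2$ and $\Omega=\alpha\beta\psi^2\omega^2$ the two structure maps of this pre-Lie coalgebra. First I would unwind $\Delta_\star\colon c\mapsto \omega(c_{12})\otimes\beta\psi^2(c_{11})\alpha\psi\omega^2(c_{2})$, so that $c_{[1]}=\omega(c_{12})$ and $c_{[2]}=\beta\psi^2(c_{11})\alpha\psi\omega^2(c_2)$. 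The key step is computing the two iterated expressions $\Psi\Omega(c_{[1]})\otimes(\Omega\otimes\id)\Delta_\star(c_{[2]})$ and $(\Psi\otimes\Omega\otimes\id)(\Delta_\star\otimes\Psi)\Delta_\star(c)$; this forces one to apply $\Delta_\star$ to a \emph{product} $\beta\psi^2(c_{11})\,\alpha\psi\omega^2(c_2)$, which is where the compatibility Eq.~(\ref{eq:12.4}) enters — it splits $\Delta$ of a product into three terms (the $\mu\otimes\beta$ term, the $\alpha\otimes\mu$ term, and the weight-$\lambda$ term $\lambda\alpha\omega\otimes\beta\psi$). After substituting, using BiHom-coassociativity Eq.~(\ref{eq:1.9}) to rebracket the triply-iterated comultiplications, BiHom-associativity Eq.~(\ref{eq:1.3}) to rebracket the products, and the commuting/multiplicativity relations Eqs.~(\ref{eq:1.2}), (\ref{eq:1.7}), (\ref{eq:12.1})–(\ref{eq:12.3}) to move structure maps around, one should arrive at an explicit symmetric expression for $\bar\Delta_\star(c)$ — symmetric in the first two tensor slots up to the flip $\Phi_{(12)}$ — exactly as happens in the proof of Theorem \ref{thm:co13.3}, where $\bar\Delta_\star(c)$ collapses to a sum of terms each manifestly invariant or anti-invariant under swapping the first two factors.

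The main obstacle I anticipate is purely bookkeeping: managing the proliferation of Sweedler indices (one gets terms with triply-nested subscripts such as $c_{1112}$, $c_{211}$, etc.) together with the long strings of structure maps $\alpha^i\beta^j\psi^k\omega^l$ that accumulate, and making sure that after all rebracketing via Eqs.~(\ref{eq:1.3}) and (\ref{eq:1.9}) every surviving term appears in a flip-symmetric pair while the $\lambda$-weighted terms (coming from the last summand of Eq.~(\ref{eq:12.4})) also pair up symmetrically. There is no conceptual difficulty beyond what already appears in Theorems \ref{thm:13.3} and \ref{thm:co13.3}; indeed the statement is flagged as having a proof ``similar to Theorem \ref{thm:13.3},'' so I would present the computation in the same abbreviated three-block style: (i) expand $(\Omega\otimes\id)\Delta_\star(c_{[2]})$ via Eq.~(\ref{eq:12.4}); (ii) expand $(\Psi\otimes\Omega\otimes\id)(\Delta_\star\otimes\Psi)\Delta_\star(c)$ via Eq.~(\ref{eq:1.9}); (iii) subtract, simplify with Eqs.~(\ref{eq:1.3}), (\ref{eq:1.7}), (\ref{eq:12.1})–(\ref{eq:12.3}) to get $\bar\Delta_\star(c)$ as an explicit sum, and observe that $\bar\Delta_\star - \Phi_{(12)}\bar\Delta_\star = 0$ because the positions of the first two tensor factors are symmetric. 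For the write-up one may legitimately omit the routine reindexing and simply record the three intermediate expressions and the final cancellation, exactly mirroring the level of detail in the proof of Theorem \ref{thm:co13.3}.
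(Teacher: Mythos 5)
Your proposal is correct and matches the paper's approach exactly: the paper's own proof of Theorem \ref{thm:co13.10} is literally the one-line remark ``Similar to Theorem \ref{thm:co13.3},'' i.e.\ the same three-block computation you describe (expand via Eq.~(\ref{eq:12.4}), rebracket via Eqs.~(\ref{eq:1.9}) and (\ref{eq:1.3}), and observe that $\bar{\Delta}_{\star}-\Phi_{(12)}\bar{\Delta}_{\star}=0$ by flip-symmetry of the surviving terms), carried out with the non-invertible structure maps $\psi\omega^{2}$ and $\alpha\beta\psi^{2}\omega^{2}$. Your identification of where Eq.~(\ref{eq:12.4}) enters (applying $\Delta$ to the product in the second tensor factor) and of the pairing-up of the $\lambda$-weighted terms is precisely what makes the paper's computation in Theorem \ref{thm:co13.3} close.
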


 \begin{proof}Similar to Theorem \mref{thm:co13.3}. \end{proof}

 \section{Further research}
 Let $(A, \mu, \Delta, \alpha_{A}, \beta_{A}, \psi_{A}, \omega_{A})$ be a $\lambda$-infBH-bialgebra, $M$ a vector space, $\alpha_{M}$, $\beta_{M}$, $\psi_{M}$, $\omega_{M}:\ M\longrightarrow M$ be four linear maps such that any two of them commute. A {\bf $\lambda$-infBH-Hopf bimodule} over  $(A,\mu,\Delta,\alpha_{A},\beta_{A},\psi_{A},\omega_{A})$ is a 9-tuple $(M,\gamma,\nu,\rho,\varphi,\alpha_{M},\beta_{M},\psi_{M},\omega_{M})$, where
 \begin{eqnarray*}
 \gamma:A\otimes M\longrightarrow M,\ \nu:M\otimes A\longrightarrow M,\ \rho:M\longrightarrow A\otimes M\ \hbox{and}\ \varphi:M\longrightarrow M\otimes A
 \end{eqnarray*}
 are linear maps satisfying the following conditions:

 (1) $(M,\gamma,\rho,\alpha_{M},\beta_{M},\psi_{M},\omega_{M})$ is a left $\lambda$-infBH-Hopf module over $(A,\mu,\Delta,\alpha_{A},\beta_{A},\psi_{A},\omega_{A})$.

 (2) $(M,\nu,\varphi,\alpha_{M},\beta_{M},\psi_{M},\omega_{M})$ is a right $\lambda$-infBH-Hopf module over $(A,\mu,\Delta,\alpha_{A},\beta_{A},\psi_{A},\omega_{A})$.

 (3) $(M,\gamma,\nu,\alpha_{M},\beta_{M})$ is a BiHom-bimodule over $(A,\mu, \alpha_{A},\beta_{A})$.

 (4) $(M,\rho,\varphi,\psi_{M},\omega_{M})$ is a BiHom-bicomodule over $(A, \Delta, \psi_{A},\omega_{A})$.

 (5) the following equations hold:
 \begin{eqnarray}
 &(\gamma\otimes\beta_{A})(\omega_{A}\otimes\varphi)=\varphi\circ\gamma,&\mlabel{eq:20.01}\\
 &(\alpha_{A}\otimes\nu)(\rho\otimes\psi_{A})=\rho\circ\nu.&\mlabel{eq:20.02}
 \end{eqnarray}

 In the forthcoming paper \cite{MM}, motivated by a class of $\l$-infinitesimal BiHom-biproduct bialgebra, we discuss the notion above, which makes the following diagram commutative.

$$ \xymatrix@C4.3em{
&\text{Quasitriangular-infBH}\atop \text{Bimodules}
\ar@2{->}^{ }[r]
\ar@2{->}_{ }[d]
&\text{$\lambda$-Rota Baxter}\atop \text{BiHom-Bimodules}
\ar@2{->}^{ }[r]
&\text{Dendriform}\atop \text{BiHom-Bimodules}\\
&\text{$\lambda$-infBH}\atop \text{Bimodules}
\ar@2{->}^{ }[rr]
&&\text{BiHom-Pre-Lie}\atop \text{Bimodules}
\ar@2{<-}^{ }[u]
&&}
$$

 \section*{Acknowledgment} Ma is supported by Natural Science Foundation of Henan Province (No.212300410365).

 \end{document}